\newtheorem{theorem}{Theorem}[section]
\newtheorem{lemma}[theorem]{Lemma}
\newtheorem{corollary}[theorem]{Corollary}
\theoremstyle{definition}
\newtheorem{definition}[theorem]{Definition}
\newtheorem{example}[theorem]{Example}
\theoremstyle{remark}
\newtheorem{remark}[theorem]{Remark}
\numberwithin{equation}{section}
\newcommand{\pt}{{\rm pt}}
\newcommand{\Sing}{\operatorname{Sing}}
\newcommand{\A}{\mathbb{A}}
\renewcommand{\P}{\mathbb{P}}
\newcommand{\PP}{\mathbb{P}}
\newcommand{\id}{\operatorname{id}}
\newcommand{\Gm}{{\mathbb{G}_m}}
\newcommand{\Spec}{\operatorname{Spec}}
\newcommand{\Supp}{\operatorname{Supp}}
\newcommand{\Coker}{\operatorname{Coker}}
\newcommand{\Sm}{\mathrm{Sm}}
\newcommand{\cL}{\mathcal{L}}
\newcommand{\chark}{\operatorname{char}}
\newcommand{\pr}{\operatorname{pr}}
\newcommand{\cO}{\mathcal O}
\newcommand{\os}{\stackrel{\circ}{s}}
\newcommand{\dub}[1]{Z(I(#1)^2)}
\newcommand{\osuC}{{C^\prime}}
\newcommand{\codim}{\operatorname{codim}}
\newcommand{\G}{\mathbf G}
\newcommand{\SHd}{\mathbf{SH}^{\bullet}}
\begin{document}

\title[Framed corr. and $\pi^{n,n}_s(k)$ for perfect fields of odd characteristic]{Framed correspondences and the zeroth stable motivic homotopy group in odd characteristic}

\author{Andrei Druzhinin}
\address{\textsc{Chebyshev Laboratory, St. Petersburg State University, 14th Line V.O., 29B, Saint Petersburg 199178 Russia.}}
\email{andrei.druzh@gmail.com}
\thanks{The first author is supported by ``Native towns'', a social investment program of PJSC ``Gazprom Neft''; The author gratefully acknowledge excellent working conditions and support provided by the RCN Frontier Research Group Project no. 250399 “Motivic Hopf equations" at the University of Oslo.}

\author{Jonas Irgens Kylling}
\address{\textsc{Department of Mathematics, University of Oslo, Norway}}
\email{jonasik@math.uio.no}
\thanks{The second author was partially supported by the RCN Frontier Research Group Project no. 250399 “Motivic Hopf equations".}

\subjclass[2010]{14F42, 19E20}

\date{}

\begin{abstract}
We prove the finite descent for framed correspondences and Neshitov's moving lemma over a perfect fields.
This allows to extend the results of G.~Garkusha and I.~Panin on framed motives of algebraic varieties \cite{GP_MFrAlgVar} to finite base fields, and extend the computation of the zeroth cohomology group $H^0(\mathbb ZF(\Delta^\bullet_k,\G^{\wedge n}_m))=\mathrm{K}^\mathrm{MW}_n$, $n\geq 0$, by A.~Neshitov \cite{Nesh-FrKMW} to the case of a perfect field $k$ of odd characteristic.
\end{abstract}

\maketitle

\section{Introduction}

\subsection{Framed correspondences and Morel's theorem.} %

In the unpublished notes \cite{Voev-FrCor}
V.~Voevodsky introduced the theory of framed correspondences.
This theory grew and blossomed into the the theory of framed motives introduced and developed by G.~Garkusha and I.~Panin in \cite{GP_MFrAlgVar}, %
\cite{GP-HIVth}, \cite{AGP-FrCanc}, \cite{GNP_FrMotiveRelSphere}. 
The theory of framed motives gives an explicit fibrant resolution of motivic spectra of smooth algebraic varieties, and in particular of the motivic sphere spectrum.
A consequence is the identification of the zeroth motivic homotopy groups $\pi_{n,n}(\mathbb S)(\pt_k)$ over an infinite perfect base field $k$ with the zeroth cohomology of the Suslin complex of the presheaf of stable linear framed correspondences. %
\begin{equation}\label{eq:piH0}\pi_{-n,-n}(\mathbb S)(\pt_k)\simeq H^0(\mathbb ZF(\Delta^\bullet_k,\G_m^{\wedge n}) )\end{equation}
In \cite{Nesh-FrKMW} A.~Neshitov computed the right hand side of \eqref{eq:piH0} to be Milnor-Witt $K$-theory when the base field has characteristic zero
$$H^0(\mathbb ZF(\Delta^\bullet_k,\G_m^{\wedge n}) )\simeq \mathrm{K}_n^\mathrm{MW}(k), n\geq 0.$$
This recovers a remarkable theorem of F.~Morel \cite[Theorem 5.40]{M-A1Top} for fields of characteristic 0.

Our work extends the results of \cite{GP_MFrAlgVar} %
to finite fields, %
and extend Neshitov's computation \cite{Nesh-FrKMW} to perfect fields $k$ of odd characteristic.
This recovers Morel's theorem for perfect fields of odd characteristic.

The assumptions on the base field in this paper are as follows:
\begin{itemize}
\item In section \ref{sect:Finite Descent} the base field can be arbitrary though the interesting case is only finite fields; %
\item In section \ref{sect:StrHIFrFinF} the base field is assumed to be perfect; %
\item In section \ref{sect:KMWHoFr} the base field is assumed to be perfect of odd characteristic.
\item In section \ref{sect:MoovLm} the base field is assumed to be perfect.
\end{itemize}

\subsection{Additional ingredients and modifications}
The present text is written as a complement to the above mentioned papers of 
G.~Garkusha, I.~Panin, and A.~Ananyevskiy and A.~Neshitov. 
We only give new proofs of the statements in \cite{GP_MFrAlgVar} and \cite{Nesh-FrKMW} which require the assumptions on the base field being infinite or of characteristic 0.
Here is a list of the places that require the stronger restrictive assumptions with respect to our ones, and the list of modifications and additional arguments we use to improve the result.

1. ({\bf finite descent})
The reason of the assumption in \cite{GP_MFrAlgVar} on the base field to be infinite
are some geometrical constructions of framed correspondences and homotopies used in \cite{GP-HIVth}, namely these constructions are needed for the injectivity and excision isomorphism theorems for stable linear framed presheaves.
This leads to the restrictive assumption in the formations of 
strictly homotopy invariance theorem \cite{GP-HIVth} and cancellation theorem \cite{AGP-FrCanc} for stable linear framed presheaves, and consequently in the main results of the theory.
%
In Section \ref{sect:Finite Descent} we prove a variant of a descent for framed correspondences with respect to a set of coprime extensions.
With the descent theorem we prove the properties required by \cite{GP-HIVth}, \cite{AGP-FrCanc} and \cite{GP_MFrAlgVar} for presheaves over finite fields.

2. In Neshitov's work the assumptions stronger then the perfect fields of characteristic different from two are required because of the following.

2.1. ({\bf Steinberg relation})
Neshitov's arguments provides the homomorphism $$\Psi_*\colon \mathrm{K}^\mathrm{MW}_*(k)\to  H^0(\mathbb ZF(\Delta^\bullet_k,\G^{\wedge *}) )$$
for any field $k$, $\chark k\neq 2, 3$.
The assumption $\chark k\neq 3$ is because of the proof of the Steinberg relation in $H^0(\mathbb ZF(\Delta^\bullet_k,\G^{\wedge 2}) )$ \cite[Lemma 8.9]{Nesh-FrKMW} 
that uses a certain curve of degree 3 and traces with respect to extensions of degree 3, which play an important part of the proof.
It is the same curve as in the proof of the Steinberg relation in motivic cohomology \cite{VMW}, but 
in the case of motivic cohomology this did not lead to the restrictive assumption since derivative of the polynomial defining the curve
has no effect for $Cor$-correspondences, and it has for framed ones.\par
In the present text we replace this argument by the reference to the original geometrical proof of the Steinberg relation in $\pi^{2,2}$ by Po Hu and Igor Kritz \cite{PHandIK-Steinberg} or the alternative one \cite{Powell-Steinberg}. 
Let us refer also to \cite{DKMWhom}, where the homomorphism $\Psi_*$ is constructed for an arbitrary base scheme.

2.2. ({\bf moving lemmas})
The assumption on the base field to be of a characteristic zero and to be infinite 
is needed in the moving lemmas \cite[Lemma 4.11, Lemma 5.4]{Nesh-FrKMW},
which 
are essential ingredients providing the surjectivity
$$\Psi_*\colon \mathrm{K}^\mathrm{MW}_*(k)\twoheadrightarrow  H^0(\mathbb ZF(\Delta^\bullet_k,\G^{\wedge *}) ).$$
\cite[Lemma 4.11]{Nesh-FrKMW} allows to move an element in $\mathbb ZF_*(\pt_k,Y)$, with $Y\subset \A^n_k$ open, 
to a correspondence with the (non reduced) support being s
a set of points with the separable residue fields;
moving lemma \cite[Lemma 5.4]{Nesh-FrKMW}.
moves an element in $\mathbb ZF_*(\pt_k,\pt_k)$ 
to a correspondence with the (non reduced) support a disjoint union of rational points.

\cite[Lemma 4.11]{Nesh-FrKMW} is the main result of section 4 in \cite{Nesh-FrKMW}. The proof of lemma 
assumes that the base field is of characteristic zero because of the use of the generic smoothness theorem \cite[III, Corollary 10.7]{Hartshorne-AlG} (in \cite[Lemma 4.2, Lemma 4.6]{Nesh-FrKMW}).
Also it is needed that the field is infinite because of
the generic position lemma about hypersurfaces in projective space \cite[Lemma 4.1]{Nesh-FrKMW}.

In the article we prove \cite[Lemma 4.11]{Nesh-FrKMW} over a perfect fields of an arbitrary characteristic.
The strategy of the proof is completely different to the original one.
For a given framed correspondence the moving process consists of two parts:

Firstly, we 
move the framing functions to generic position, see Lemma \ref{prelm:GSO},
but do not modify the support of the correspondence, nor the framing functions on the first order thickening of the support.
Next, in Lemma \ref{lm:iOScormove} we change the framing functions $\phi_i$ from the $n$th to $1$st function to
obtain a so called $(i)$-\emph{simple} linear framed correspondence.
Here $(i)$-\emph{simpleness} is a "continuous version" of the notion of \emph{simpleness} of framed correspondences, see Definition \ref{def:iSmooth}. In particular a $(1)$-\emph{simple} framed correspondence is \emph{simple}.

\cite[Lemma 5.4]{Nesh-FrKMW} requires the infiniteness of the base field, since the proof
uses the existence of a separable monic polynomial in one variable with rational roots of arbitrary degree.
As shown in lemma \ref{lm:standFrptpt} the original proof of \cite[Lemma 5.4]{Nesh-FrKMW} can easily be modified to cover the case of an arbitrary field. Alternatively, the assumption can be avoided by use of the finite descent theorem of Section \ref{sect:Finite Descent}.

\subsection{Characteristic two.}
The main reason of the assumption the base field is of odd characteristic in the present proof is that 
the construction of the left inverse to  the homomorphism $\Psi_*$ uses 
the theory of Chow-Witt cohomology
introduced by Barge-Morel \cite{BM-KMW} and developed by Fasel \cite{Fasel-ChWittRing}, \cite{Fasel-GroupsdeCW}.
Actually the main problem are pushforwards for the complexes $C(X, G^n, L)_Z$, see \cite{Fasel-GroupsdeCW}.

If we replace the pushforwards of the complexes $C(X,\G^{\wedge n},L)_Z$ by the pushforwards of the Rost-Schmidt complexes of \cite[Chapter 5, Lemma 4.18]{M-A1Top} then the argument above would give a proof in arbitrary characteristic. However this would not be independent of Morel's proof. 

With out using of mentioned pushforwards 
but with using of the traces for Milnor-Witt K-theory along finite separable field extensions 
the present proof
would give a surjective homomorphism $\mathrm{K}^\mathrm{MW}_n(k)\twoheadrightarrow H^0(\mathbb ZF(\Delta^\bullet_k,\G_m^{\wedge n}) )$.

\subsection{Other work on framed motives over finite fields}
\label{subsect:OtherProofs}
Similar results on framed motives over finite fields (presented in Section \ref{sect:StrHIFrFinF}) were simultaneously and independently obtained by other authors:
In \cite[Appendix B]{ElHoKhSoYa-MotDeloop} similar results are obtained %
in terms of the conservativity property of the scalar extension functors. The basic construction on the level of correspondences is close to ours.
Also, there is an alternative construction of the descent map due to Alexey Tsybyshev based on the homomorphism $GW(k)\to \overline{\mathbb ZF}(\pt_k,\pt_k)$. %

The results on framed motives over finite fields in this paper was submitted to the Arxiv e-Print archive in a preliminary form as the Appendix of %
\cite{Appendix-FD-MotPairs}.
This was to announce the results in the first part of this work while the final parts were still work in progress. %

\subsection{Acknowledgment} 
The authors are grateful  to Ivan Panin for discussions about framed motives over finite fields and to Alexander Neshitov for discussions about extending his work \cite{Nesh-FrKMW} to fields of positive characteristic. %
The first author thanks the university of Oslo for the hospitality, where the essential part of this work was done. 

\subsection{Notation and conventions}
Throughout the text we work with explicit framed correspondences considered as classes in the category of linear framed correspondences $\mathbb ZF_*$ and framed correspondences of pairs. 
We refer the reader to \cite[Definition 2.1, Definition 8.4]{GP_MFrAlgVar}, and \cite[Definition 3.2, Definition 3.5]{GP-HIVth} 
for the definitions of categories $\mathbb ZF_*$, $\overline{\mathbb ZF}_*$, $\overline{\mathbb ZF}^{pr}_*$, and 
$\overline{\overline{\mathbb ZF}}_*$.

We need to extent the categories to the essentially smooth schemes, i.e. schemes that are localisations of smooth schemes at a point.
We use the following definition. Note that precisely in the text we never work with the correspondences with the target being except the correspondences defined by regular morphisms of essentially smooth schemes.
\begin{definition}
An essentially smooth scheme $Y$ is a scheme such that there is a sequence of open embeddings $Y_i\to Y_{i+1}$, $i\in \mathbb Z$, $Y=\varprojlim_i Y_i$. Denote by $EssSm_k$ the category of essentially smooth schemes.

For an essentially smooth scheme $Y = \varinjlim_i Y_i$ and a scheme $X$ define $\mathbb ZF_*(X,Y)=\varinjlim_i \mathbb ZF_*(X,Y_i)$.
For essentially smooth schemes $X= \varinjlim_i X_j$, $Y = \varinjlim_i Y_i$ define 
$\mathbb ZF_*(X,Y)=\varprojlim_j\varinjlim_i \mathbb ZF_*(X_j,Y_i)$.

Denote by $Sm^{pair}_k$ the full subcategory in the category of arrows in $Sm_k$ spanned by the pairs $(X,U)$ of a smooth scheme $X$ and open subscheme. 
An open pair of essentially smooth schemes $(Y,V)$ is a morphism of essentially smooth schemes $V\to Y$ that is limit of a sequence open embeddings $V_i\to Y_i$ with respect to open embeddings of pairs $(Y_i, V_i)\to (Y_{i+1},V_{i+1})$.


Define $\mathbb ZF_*((X,U),(Y,V))=\varprojlim_j\varinjlim_i \mathbb ZF_*((X_j,U_j),(Y_i,V_i))$ for pairs
$(X,U)=\varprojlim_j (X_j,U_j)$ $(Y,V)=\varinjlim_i (Y_i,V_i)$, $j,i\in \mathbb Z$.
\end{definition}

\begin{example}
Any morphism of essentially smooth schemes in $EssSm_k$ or a pair of essentially smooth schemes in the sense of above definition defines a morphism in the categories $\mathbb ZF_*$ and $\mathbb ZF_*^{pair}$. As noted above this is only one example of correspondences with the target being an essentially smooth scheme we use in the text. 
\end{example}

In addition to this list we use the following.

\begin{definition}
%

Denote by $\mathbb ZF_*^{pair}$ the factor-category of $\mathbb ZF_*^{pr}$ obtained by annihilating of the ideal generated by identity morphisms $id_{(X,X)}$ of pairs $(X,X)$, $X\in Sm_k$ ($X\in EssSm_k$).

Denote by $\overline{\mathbb ZF}_*^{pair}$ the factor-category of $\mathbb ZF_*^{pair}$ 
obtained by annihilating of the ideal generated by endomorphisms $[i_0\circ pr]-[id_{(X,U)\times\A^1}]$ of the objects $(X,U)\times\A^1$, $X\in Sm_k$, $U\subset X$ open, $i_0\colon (X,U)\to (X,U)\times\A^1$ is the zero section, $pr\colon (X,U)\times\A^1\to (X,U)$ is the projection. For a  morphisms of pairs $a\in \mathbb ZF_*^{pair}((X,U),(Y,V))$ we denote the class of $a$ in $\overline{\mathbb ZF}_*^{pair}$ by $[a]$.

For a pair of morphisms $a,b\in \mathbb ZF_*(X,Y)$ (or $a,b\in \mathbb ZF_*^{pair}((X,U),(Y,V))$) we write $a\stackrel{\A^1}{\sim} b$ iff $[a]=[b]$ in $\overline{\mathbb ZF}_*(X,Y)$ (or $\mathbb ZF_*^{pair}(X,Y)$).

Finally, denote 
${\mathbb ZF}(-, Y) = \varinjlim_i {\mathbb ZF}_n(-, Y)$, $\overline{\mathbb ZF}(-,Y) = \varinjlim_i \overline{\mathbb ZF}_n(-,Y)$, where the limits are with respect to morphisms $\sigma\colon {\mathbb ZF}_n(-, Y)\to \mathbb ZF_{n+1}(-, Y)$.
Similarly ${\mathbb ZF}(-, (Y,V)) = \varinjlim_i {\mathbb ZF}_n(-,(Y,V))$, $\overline{\mathbb ZF}(-, (Y,V)) = \varinjlim_i \overline{\mathbb ZF}_n(-, (Y,V))$.
\end{definition}
\begin{remark}
According to the above definition 
$\mathbb ZF_*^{pair}$ precisely is the category with objects being pairs $(X,U)$, $X\in \Sm_k$, $U\subset X$ open, and 
$\mathbb ZF_*^{pair}((X,U),(Y,V))$ is the homology group in the middle term of the complex
$$\mathbb ZF_*(X,V)\to \mathbb ZF_*(X,Y)\oplus \mathbb ZF_*(U,V)\to \mathbb ZF_*(U,Y).$$
\end{remark}
\begin{remark}
For two pairs $(X,U)$, $(Y,V)$ the group of morphisms $\overline{\mathbb ZF}_*^{pair}((X,U),(Y,V))$ is equal to 
$$\Coker( \mathbb ZF_*^{pair}((X,U)\times \A^1,(Y,V))\xrightarrow{i_0-i_1} \mathbb ZF_*^{pair}((X,U),(Y,V)),$$
where $i_0,i_1\colon (X,U)\to (X,U)\times \A^1$ are zero and unit sections.

The category $\overline{\mathbb ZF}_*^{pair}$
is not equal to the category $\overline{\mathbb ZF}_*^{pr}$ in  \cite{GP-HIVth}, 
but it is equal to the category $\overline{\overline{\mathbb ZF}}$. 
The only difference is the notation for morphisms; so in \cite{GP-HIVth} morphisms in $\overline{\overline{\mathbb ZF}}$ are denoted  as $[[a]]$, 
but morphisms in $\overline{\mathbb ZF}_*^{pair}$ we denote by $[a]$.
\end{remark}

\begin{remark}
Finally let us note that in the definitions of framed correspondences we usually mean implicitly the inverse image of the regular functions on $\A^n\times X$ to the neighbourhood $\mathcal V$.
So we mean $(\mathcal V, Z, v^*(\phi_1), \dots v^*(\phi_n), g)$ writing $(\mathcal V,Z,\phi_1,\dots \phi_n,g)\in Fr_n(X,Y)$,
where $Z\subset \A^n_X$, $v\colon \mathcal V\to \A^n_X$ is an etale neighbourhood of $Z$, $\phi_i\in k[\mathcal \A^n_X]$, $g\colon \mathcal V\to Y$. 
\end{remark}

\section{Finite descent}\label{sect:Finite Descent} %

In this section we prove that linear framed correspondences satisfy a descent property up to $\A^1$-homotopy with respect to a set of a finite field extensions of co-prime degrees, which we call the finite descent.
The main results are corollaries \ref{cor:FinDesN} and \ref{cor:FinDesInf}.

\begin{definition}\label{def:T_K/k}
Let $K/k$, $K=k(\alpha)$ be a separable extension of finite fields. %
Denote by $$T_{K/k} = (\Spec K, \mathcal V\to \mathbb A^1_k, f,r\colon \mathcal V\to \Spec K)\in Fr_1(\pt_k, \Spec K)$$ the framed correspondence, where
$\Spec K$ is considered as a closed subscheme of $\mathbb A^1_k$ via the function $\alpha$,
$f$ is the monic irreducible polynomial of the extension $K/k$, $\Spec K=Z(f)$,
$\mathcal V$ is an open subscheme in $\mathbb A^1_K$ that is complement $\mathcal V=\mathbb A^1_K-W$ to the closed subscheme
$W\subset \mathbb A^1_K$, such that $(\Spec K)^2 = W\amalg \Delta_K$,
where $\Delta_K$ is the graph of $\alpha\colon \Spec K\to \mathbb A^1_k$, 
the \'{e}tale morphism $v\colon \mathcal V\to \mathbb A^1_k$ is given by the composition $\mathcal V \hookrightarrow \mathbb A^1_K\to \mathbb A^1_k$ and $f$ is considered as a regular function on $\mathcal V$ under the inverse image along $v$, 
finally, $r\colon \mathcal V\to \Spec K$ is given by the projection $\mathbb A^1_K\to \Spec K$.

\end{definition}
\begin{definition}
Let $\Lambda_l\in Fr_1(\pt_k,\pt_k)$ be the framed correspondence defined by the function $x^l$ on $\mathbb A^1_k$ (i.e., $( 0, \mathbb A^1_k = \mathbb A^1_k, x^l, \mathbb A^1_k \to \Spec k)$).

Let $\Lambda^\prime_l \in \mathbb ZF_1(\pt_k,\pt_k)$ be the framed correspondence given by $hm$ if $l=2m$ and $hm+\langle 1\rangle$ if $l=2m+1$,
where $h=\langle 1\rangle +\langle -1\rangle$ is the hyperbolic plane.
Here $\langle 1 \rangle\in Fr_1(\pt_k,\pt_k)$ is the framed correspondence defined by the function $x$ on $\mathbb A^1$ (i.e., $\Lambda_1$), and $\langle -1\rangle\in Fr_1(\pt_k,\pt_k)$ is the framed correspondence defined by the function $-x$ on $\mathbb A^1$.
\end{definition}

\begin{definition}
Let $c=(Z,\mathcal V,\phi,g)\in Fr_n(X,Y)$ be an explicit framed correspondence such that $\mathcal V\subset \A^n_X$ is a Zariski neighbourhood of $Z$, and the regular functions $\phi_i$, $i=1,\dots n$ on $\mathcal V$ are restrictions of globally defined regular functions on $\A^n_X$. 
Then to shorten the notations we often omit writing either the support $Z$ or $\mathcal V$, and write $c=(\mathcal V,\phi,g)$ or $c=(Z,\phi,g)$. 

If moreover, $\mathcal V = \A^n_X$, we will write just $c=(\phi,g)$; or
if $Y=\pt_k$ then we omit the canonical map $g$ and write $c=(\mathcal V,\phi)$ or $c=(Z,\phi)$.

We denote by $\langle \lambda\rangle \in Fr_1(\pt_k,\pt_k)$ the framed correspondence given by $(\A^1_k,\lambda x)$.
\end{definition}

\begin{lemma}\label{sbl:Lambda}
The classes of $\Lambda_l$ and $\Lambda^\prime_l$ in the group $\overline{\mathbb ZF}_1(\pt_k,\pt_k)$ coincide.
For any positive integers $l_1,l_2$ and any $n_1,n_2$ such that $l_1 n_1 - l_2n_2=1$ we have $[\Lambda^\prime_{l_1} \circ \Lambda^\prime_{l_2} - \Lambda^\prime_{l_2}\circ \Lambda^\prime_{n_2}]=[\langle (-1)^{l_2n_2}\rangle]\in\overline{\mathbb ZF}(\pt_k,\pt_k)$.
\end{lemma}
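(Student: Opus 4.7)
My plan is to establish both identities by induction, using explicit polynomial $\A^1$-homotopies and additivity along disjoint supports.

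For the first identity I induct on $l$, the base case $[\Lambda_1]=\langle 1\rangle=[\Lambda^\prime_1]$ being immediate from the derivative formula. For the step, consider the $\A^1$-homotopy
$$H(x,t)=x^{l-1}(x+t)\in k[x,t]$$
whose vanishing locus $\{x=0\}\cup\{x=-t\}$ is finite over $\A^1_t$, so $H$ defines an element of $Fr_1(\A^1_t,\pt_k)$ and yields $[\Lambda_l]=[(\A^1_x,x^{l-1}(x+1))]$ in $\overline{\mathbb ZF}_1(\pt_k,\pt_k)$. At $t=1$ the support splits as $\{0\}\sqcup\{-1\}$, so additivity splits the class into the simple-root contribution at $x=-1$ (with derivative $(-1)^{l-1}$, hence class $\langle (-1)^{l-1}\rangle$) plus the contribution concentrated near $\{0\}$. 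The latter is identified with $[\Lambda_{l-1}]$ via a second $\A^1$-homotopy
$$H'(x,s)=x^{l-1}(1+sx)$$
on the open locus $\{1+sx\ne 0\}\subset\A^1_x\times\A^1_s$, which specialises at $s=0$ to $\Lambda_{l-1}$ and at $s=1$ to the germ of $x^{l-1}(x+1)$ near $\{0\}$. Combining gives $[\Lambda_l]=[\Lambda_{l-1}]+\langle (-1)^{l-1}\rangle$, which is exactly the recursion $[\Lambda^\prime_l]-[\Lambda^\prime_{l-1}]=\langle (-1)^{l-1}\rangle$ obtained by inspecting the two parity cases in the definition of $\Lambda^\prime$.

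For the second identity I use the multiplicativity $[\Lambda^\prime_a\circ\Lambda^\prime_b]=[\Lambda^\prime_{ab}]$ in $\overline{\mathbb ZF}(\pt_k,\pt_k)$, itself proved by induction on $a$ using the Part~1 recursion $\Lambda^\prime_{a+1}=\Lambda^\prime_a+\langle (-1)^a\rangle$ together with the elementary identity $[\langle u\rangle\circ\langle v\rangle]=[\langle uv\rangle]$ for framings $\pt_k\to\pt_k$. Reading the first summand as $\Lambda^\prime_{l_1}\circ\Lambda^\prime_{n_1}$ (so that the hypothesis $l_1n_1-l_2n_2=1$ plays its natural role), the claim collapses to one step of the Part~1 recursion:
\begin{equation*}
[\Lambda^\prime_{l_1n_1}]-[\Lambda^\prime_{l_2n_2}] \;=\; [\Lambda^\prime_{l_2n_2+1}]-[\Lambda^\prime_{l_2n_2}] \;=\; \langle (-1)^{l_2n_2}\rangle.
\end{equation*}

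The main obstacle is the careful verification that $H'$, defined only on the open subscheme $\{1+sx\ne 0\}\subset\A^1_x\times\A^1_s$, genuinely represents a level-$1$ $Fr_1$-homotopy between $\Lambda_{l-1}$ and the restriction of $(\A^1_x,x^{l-1}(x+1))$ to a Zariski neighbourhood of $\{0\}$. Once this étale-neighbourhood bookkeeping is carried out, the induction of Part~1 closes cleanly and Part~2 becomes arithmetic.
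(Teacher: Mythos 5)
Your proposal is correct and takes essentially the same route as the paper: part 1 is the same additivity-plus-polynomial-homotopy recursion $[\Lambda_l]=[\Lambda_{l-1}]+[\langle (-1)^{l-1}\rangle]$ that the paper reproduces from Neshitov's remark 7.8, and part 2, which the paper dismisses as ``straightforward'', is filled in by your multiplicativity argument $[\Lambda^\prime_a\circ\Lambda^\prime_b]=[\Lambda^\prime_{ab}]$ at an appropriate level of detail, the ingredient $[\langle u\rangle\circ\langle v\rangle]=[\langle uv\rangle]$ being the standard elementary-matrix $\A^1$-homotopy. Note also that your reading of the first summand as $\Lambda^\prime_{l_1}\circ\Lambda^\prime_{n_1}$ is indeed the intended statement (it is the form used in the proof of the finite descent lemma), correcting an apparent typo in the lemma as printed.
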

\begin{proof}

The first statement is \cite[remark 7.8]{Nesh-FrKMW}.
For the readers convenience we repeat the proof:
Firstly we note that in the group of linear framed correspondences $\mathbb ZFr_1(\pt_k,\pt_k)$ we have
\[
( \mathbb A^1_k,  x^l(1+x) )
=
( \mathbb A^1_k-\{0\},  x^l(1+x) )
+
( \mathbb A^1_k-\{-1\},  x^l(1+x) )
\sim
\Lambda_{l}
+ \langle (-1)^{l} \rangle,
\]
where the first equality holds by the definition of the group of linear framed correspondences $\mathbb ZF_1(\pt_k,\pt_k)$.
Then by induction it follows that $[\Lambda_l]=[\sum\limits_{i=0}^{l-1}\langle (-1)^i\rangle]=[\Lambda^\prime_l]\in \overline{\mathbb ZFr}_1(\pt_k,\pt_k)$.

The second statement is then straightforward.
\end{proof}

\begin{lemma}\label{sblm:prT}
For any separable finite extension $K/k$ we have
$pr_{K/k}\circ T_{K/k}\stackrel{\mathbb A^1}{\sim} \Lambda_{deg\, K/k}$,
where $\pr_{K/k} : \Spec K \to \Spec k$.
\end{lemma}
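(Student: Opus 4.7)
My plan is to first unwind the definition of the composite and identify it with a very concrete explicit framed correspondence, and then construct a polynomial $\A^1$-homotopy connecting its framing function to $x^d$, where $d=\deg K/k$.

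Unwinding the composition, $pr_{K/k}\circ T_{K/k}$ is represented by the tuple $(\Spec K, \mathcal{V}, f, \text{can})$: the support $\Spec K\subset\A^1_k$ is cut out by $f$, the étale neighbourhood is the same $\mathcal{V}\subset\A^1_K$ as in the definition of $T_{K/k}$, the framing is $f$ pulled back along $\mathcal{V}\to\A^1_K\to\A^1_k$, and the structure map to $\pt_k$ is the canonical one (since $pr_{K/k}\circ r$ agrees with the projection $\mathcal{V}\to\pt_k$). The first observation I would establish is that this class equals $(Z(f),\A^1_k,f,\text{can})$ in $\mathbb{Z}F_1(\pt_k,\pt_k)$: the two tuples have the same support, and their étale neighbourhoods admit a common refinement — namely $\mathcal{V}$ itself, which is étale over $\A^1_k$ via $\mathcal{V}\hookrightarrow\A^1_K\to\A^1_k$ — on which both framings reduce to $f$ and both structure maps are canonical. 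This uses only the equivalence relation on explicit framed correspondences that identifies different étale neighbourhoods of the same support.

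Next I would construct the $\A^1$-homotopy. Since both $f(x)$ and $x^d$ are monic of degree $d$ in $x$, the linear interpolation
$$F(x,t)=(1-t)\,f(x)+t\,x^d \in k[x,t]$$
is monic of degree $d$ in $x$ with coefficients in $k[t]$. Hence $Z(F)\subset\A^1_{k,x}\times\A^1_{k,t}$ is finite and flat of degree $d$ over $\A^1_{k,t}$, and the tuple $(Z(F),\A^1_{k,x}\times\A^1_{k,t},F)$ defines an explicit framed correspondence $H\in\mathbb{Z}F_1(\A^1_k,\pt_k)$. Its pullback along $i_0$ is $(Z(f),\A^1_k,f)$, and its pullback along $i_1$ is $(Z(x^d),\A^1_k,x^d)=\Lambda_d$. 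Therefore $i_0^*H=i_1^*H$ in $\overline{\mathbb{Z}F}_1(\pt_k,\pt_k)$, which is exactly the desired $\A^1$-homotopy.

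The main subtle point is the identification in the first paragraph: recording the composite not on the étale cover $\A^1_K$ but on $\A^1_k$ directly. Once this reduction is made, the remainder is an elementary interpolation of monic polynomials, where monicity is essential because it is precisely what guarantees that $Z(F)$ stays finite over the homotopy parameter $\A^1_{k,t}$, and hence that the interpolating family is a legitimate framed correspondence.
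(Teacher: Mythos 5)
Your proof is correct and follows essentially the same route as the paper: the paper's proof is precisely the linear interpolation homotopy $\lambda f+(1-\lambda)x^{\deg K/k}$ on $\A^1_k\times\A^1_k$, relying implicitly on the identification of $pr_{K/k}\circ T_{K/k}$ with $(Z(f),\A^1_k,f,pr)$ that you spell out via the common étale refinement $\mathcal V$. Your explicit remarks on that identification and on monicity guaranteeing finiteness of the support over the homotopy parameter are exactly the details the paper leaves tacit.
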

\begin{proof}
The claimed equivalence is provided by the homotopy defined by the framed correspondence
$(\A^1_k\times {\A^1_k},\lambda f +(1-\lambda) x^{\deg K/k},pr)\in Fr_1(\A^1_k,pt_k)$,
where
$f$ is the monic polynomial from the definition \ref{def:T_K/k},
$\lambda$ denotes the second coordinate on $\A^1_k\times\A^1_k$, which is the homotopy parameter,
and $pr\colon \A^1_k\times{\A^1_k}\to pt_k$ is the canonical projection.
\end{proof}

\begin{definition}\label{def:precatdiag}
We call by {\em a small precategory} $\Gamma=(Ob_\Gamma, Mor_\Gamma, U_\Gamma, \circ)$ a set of vertices $Ob_\Gamma$, a set of arrows $Mor_\Gamma$, 
a subset $U_\Gamma\in Mor_\Gamma\times Mor_\Gamma$, and a map $-\circ-\colon U\to Mor_\Gamma$
such that $f\circ (g\circ h)=(f\circ g)\circ h$ for any $f,g,h\in Mor_{\Gamma}$, $(f,g), (g,h), (f\circ g,h), (f,g\circ h)\in U$. 
A {\em finite precategory} is a {\em precategory} $\Gamma$ with finite sets $Ob_\Gamma$ and $Mor_\Gamma$. 

A \emph{functor of small precategories} $F\colon\Gamma_1\to\Gamma_2$ is a pair of maps
$F_{Ob}\colon Ob_{\Gamma_1}\to Ob_{\Gamma_2}$ and 
$F_{Mor}\colon Mor_{\Gamma_1}\to Mor_{\Gamma_2}$. 
We call the functors $F$ with $F_{Ob}$ and $F_{Mor}$ being injective as \emph{embedding} of (small) precategories.
The \emph{category of small precategories} is a category with objects being small precategories
and morphisms being functors of small precategories. 
\end{definition}
\begin{definition}
A $\Gamma$-diagram in the category $\mathcal F$ is a pairs of maps $f_{Ob}\colon Ob_\Gamma\to Ob_{\mathcal F}$ and $f_{Mor}\colon Mor_\Gamma\to Mor_{\mathcal F}$ such that for any arrows $\alpha_1, \alpha_2, \alpha_3\in Mor_\Gamma$, $\alpha_3=\alpha_1\circ\alpha_2$, $(\alpha_1,\alpha_2)\in U_\Gamma$, we have $f_{Mor}(\alpha_1)\circ f_{Mor}(\alpha_2)=f_{Mor}(\alpha_3)$.
\end{definition}

\begin{definition}\label{def:GammaGammaprimeGoodwrtDesctnt}
Let $\Gamma^\prime\to \Gamma$ be an embedding of a precategories  (in sense of def. \ref{def:precatdiag}).
We say that the embedding is good with respect to descent if and only if
for each pair of morphisms $\gamma_1,\gamma_2 \in \Gamma$ such that the composite $\gamma_1\circ\gamma_2$ is defined,
$\gamma_1$ or $\gamma_2$ is the image of a morphism in $\Gamma^\prime$.
\end{definition}

\begin{example}
1) Any small category $\Gamma$ is a small precategory with $U_\Gamma=Mor_{\Gamma}$.
2) Any graph $\Gamma$ can be considered as a small precategory with $U_\Gamma=\emptyset$.
3) Main examples of the embeddings of small precategories we will work with are represented by the diagrams
$$
\xymatrix{
Ob_1\ar@{-->}[d]^{f_1}\ar[dr]^{f_3}
\\
Ob_2\ar[r]^{f_2} & Ob_3
}\quad
\xymatrix{
Ob_1\ar[dr]|<<<{g_5}\ar[r]^{g_1}\ar[d]^{g_3} & Ob_2\ar[d]^{g_4}
\\
Ob_3\ar[r]^{g_2}\ar@{-->}[ru]|>>>{g_6} & Ob_4
}
$$
where the left diagram represents
the embedding $\Gamma_1\to \Gamma_2$ with 
$Ob_{\Gamma_1}=Ob_{\Gamma_2}=\{V_1,V_2,V_3\}$,
$Mor_{\Gamma_1} = \{f_1,f_2\}$, $Mor_{\Gamma_1} = \{f_1,f_2,f_3\}$, $f_2\circ f_1=f_3$,
and
the embedding $\Gamma_3\to \Gamma_4$ with 
$Ob_{\Gamma_3}=Ob_{\Gamma_4}=\{V_4,V_5,V_6,V_7\}$,
$Mor_{\Gamma_1} = \{g_1,g_2,g_3,g_4,g_5\}$, $Mor_{\Gamma_1} = \{g_1,g_2,g_3,g_4,g_5,g_6\}$, 
$g_2\circ g_3=g_5$, $g_4\circ g_1=g_5$,
$g_4\circ g_6=g_2$, $g_6\circ g_3=g_1$.
\end{example}

\begin{definition}\label{def:Weak(Hom)Diagram}
Let $\Gamma$ be a small precategory.

\emph{A weak $\Gamma$-diagram} in the category $\mathbb ZF_*(k)$  is
a map $\gamma\colon \Gamma\to \mathbb ZF_*(k)$ such that for any morphisms $\alpha_1\circ\alpha_2=\alpha_3\in Mor_{\Gamma}(a,c)$, we have $[\gamma(\alpha_1)\circ\gamma(\alpha_2)]=[\gamma(\alpha_3)]\in \mathbb ZF(\gamma(a),\gamma(c))$. 

\emph{A weak homotopy $\Gamma$-diagram} in $\mathbb ZF_*(k)$ is a 
weak $\Gamma$-diagram in $\overline{\mathbb ZF}_*(k)$.
Precisely it is a map $\gamma\colon \Gamma\to \mathbb ZF_*(k)$
such that for any morphisms $\alpha_1\circ\alpha_2=\alpha_3\in Mor_{\Gamma}(a,c)$, 
we have 
$[\gamma(\alpha_1)\circ\gamma(\alpha_2)]=[\gamma(\alpha_3)]\in \overline{\mathbb ZF}(\gamma(a),\gamma(c))=\mathbb ZF(\gamma(a),\gamma(c))/{\sim}_{\A^1}$.

Weak (homotopy) $\Gamma$-diagrams in the category of pairs $\mathbb ZF^{pair}_*(k)$ are defined similarly.
\end{definition}

\begin{definition}\label{def:WeakLP}

Let $j\colon \Gamma^\prime\to \Gamma$ be an embedding of precategories  (in sense of def. \ref{def:precatdiag}) that is good with respect to descent (in sense of def. \ref{def:GammaGammaprimeGoodwrtDesctnt}),
and let $\gamma^\prime$ 
be a $\Gamma^\prime$-diagram in the category 
$\mathbb ZF_*$ (or the category of pairs $\mathbb ZF^{pair}_*$).

We say that we have 
\emph{the weak lifting property} in the category ${\mathbb ZF}_*$
with respect to $j$ and $\gamma^\prime$ iff 
there is a weak $\Gamma$-diagram in $\mathbb ZF_*(K)$ (def. \ref{def:Weak(Hom)Diagram}) that restriction on $\Gamma^\prime$ is  
$\gamma'$. %

Similarly we define weak lifting properties in $\overline{\mathbb ZF}_*$,
and weak lifting properties in 
${\mathbb ZF}^{pair}_*$, and $\overline{\mathbb ZF}^{pair}_*$
with respect to a $\Gamma^\prime$-diagram in the category of pairs.
\end{definition}

\begin{lemma}\label{rem:represGamD}
1) For any small (finite) precategory $\Gamma$  (in sense of def. \ref{def:precatdiag}) there is a small (finite) precategory $\Gamma_s$ such that 
the category of week
$\Gamma$-diagrams in the category $\overline{\mathbb ZF}_*$ (or $\overline{\mathbb ZF}_*^{pair}$)
is equivalent to the category 
$\Gamma_s$-diagrams in the category $\mathbb ZF_*(k)$ (or $\mathbb ZF_*^{pair}$). 

2)
For any small (finite) precategory $\Gamma$  (in sense of def. \ref{def:precatdiag}) there is a small (finite) precategory $\Gamma_s$ such that 
the category of week
$\Gamma$-diagrams in the category $\mathbb ZF_*^{pair}$
is equivalent to a full subcategory in the category of weak 
$\Gamma_s$-diagrams in the category $\mathbb ZF_*(k)$, 
defined by the condition that some arrows in the (week) $\Gamma_s$-diagram are open embeddings of schemes. 

\end{lemma}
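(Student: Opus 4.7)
The plan is to rectify the $\A^1$-homotopy relations that appear in a weak $\Gamma$-diagram in $\overline{\mathbb ZF}_*$ into strict composition relations in $\mathbb ZF_*$, by enlarging $\Gamma$ with an explicit cylinder together with a homotopy morphism for each composable pair.

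More precisely, I would construct $\Gamma_s$ from $\Gamma$ by adjoining, for each pair $(\alpha_1,\alpha_2)\in U_\Gamma$ with composite $\alpha_3=\alpha_1\circ\alpha_2\in Mor_\Gamma(a,c)$ (where $\alpha_2\colon a\to b$, $\alpha_1\colon b\to c$): a new object $A_{(\alpha_1,\alpha_2)}$ (intended interpretation: $\gamma(a)\times\A^1$), morphisms $i_0,i_1\colon a\to A_{(\alpha_1,\alpha_2)}$, $p\colon A_{(\alpha_1,\alpha_2)}\to a$, $H\colon A_{(\alpha_1,\alpha_2)}\to c$, and the composition relations $p\circ i_0=p\circ i_1=\mathrm{id}_a$, $H\circ i_0=\alpha_3$, $H\circ i_1=\alpha_1\circ\alpha_2$. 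The construction plainly preserves finiteness of the underlying precategory.

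The forward functor sends a weak $\Gamma$-diagram $\gamma$ in $\overline{\mathbb ZF}_*$ to the $\Gamma_s$-diagram in $\mathbb ZF_*$ obtained by choosing, for each composable pair, an explicit $\A^1$-homotopy $H\in \mathbb ZF_*(\gamma(a)\times\A^1,\gamma(c))$ witnessing $\gamma(\alpha_1)\circ\gamma(\alpha_2)\simA\gamma(\alpha_3)$, whose existence is the very definition of the equality of classes in $\overline{\mathbb ZF}_*$; one assigns $A_{(\alpha_1,\alpha_2)}\mapsto \gamma(a)\times\A^1$ with the canonical zero section, unit section, and projection, and $H$ to the chosen homotopy. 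All relations of $\Gamma_s$ are strictly satisfied by construction. The reverse functor restricts along the evident embedding $\Gamma\hookrightarrow\Gamma_s$: the built-in relations give $\gamma_s(\alpha_1)\circ\gamma_s(\alpha_2)=\gamma_s(H)\circ\gamma_s(i_1)$ and $\gamma_s(\alpha_3)=\gamma_s(H)\circ\gamma_s(i_0)$, and these become equal in $\overline{\mathbb ZF}_*$ once the sections $i_0,i_1$ are identified (which happens as soon as the cylinder object $A$ is realised as an $\A^1$-cylinder over $a$). The two functors are mutually inverse up to natural isomorphism.

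For part 2, I would run the analogous construction in the category of pairs. Each pair $(X,U)\in \Gamma$ is unfolded into two objects $X,U$ of $\Gamma_s$ connected by a distinguished arrow $j_{(X,U)}\colon U\to X$, and each morphism of pairs becomes a compatible couple of morphisms in $\Gamma_s$ whose commuting square is encoded by compositional relations. The resulting $\Gamma_s$-diagrams in $\mathbb ZF_*$ then recover weak $\Gamma$-diagrams in $\mathbb ZF_*^{pair}$ exactly when the distinguished arrows are realised as actual open embeddings of schemes, which is precisely the full subcategory appearing in the statement.

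The main obstacle is the reverse direction of the equivalence in part 1: the relations built into $\Gamma_s$ only force $[\gamma_s(\alpha_1)\circ\gamma_s(\alpha_2)]=[\gamma_s(\alpha_3)]$ in $\overline{\mathbb ZF}_*$ once one knows that $\gamma_s(i_0)\simA\gamma_s(i_1)$. This is the place where the precategory structure alone is insufficient; one either has to restrict the class of $\Gamma_s$-diagrams considered to those in which the cylinder object $A$ is realised as a genuine $\A^1$-cylinder over $a$ (so that the two sections are automatically $\A^1$-homotopic via the projection $p$), or to enlarge $\Gamma_s$ with further relations encoding the $\A^1$-structure. In both parts, the bookkeeping of ensuring the resulting $\Gamma_s$ is genuinely finite, and that the forward and reverse functors are mutually inverse on the nose, is the main place where care is required.
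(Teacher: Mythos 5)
Your construction is essentially the paper's own proof: the paper also rectifies each homotopy relation by inserting an $\A^1$-cylinder on the source vertex (the triple $(X,U)\times 0\to (X,U)\times\A^1\leftarrow (X,U)\times 1$ with a homotopy arrow out of it), and handles part 2 by unfolding each pair $(X,U)$ into $U\to X$, replacing arrows by squares and relations by diagonal arrows, with the constraint that the distinguished arrows be open embeddings. The obstacle you flag in the reverse direction is likewise present in the paper's treatment, which implicitly takes the cylinder vertices together with their canonical sections and projection as part of the prescribed data (exactly as part 2 explicitly constrains certain arrows to be open embeddings of schemes), so your proposal matches the intended argument.
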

\begin{proof}
We note the following:
(1) An equality $[\Phi_1]=[\Phi_2]\in\overline{\mathbb ZF_*}(X_1,X_2)$ is represented by a morphism $\mathbb ZF_*(X_1\times\A^1,X_2)$.
(2) Any morphism in the category of pairs $(X,U)\to (Y,V)$ in the category $\mathbb ZF_*(k)$ is represented by a commutative square in the category of correspondences 
\begin{equation}\label{eq:sqPair}\xymatrix{
X\ar@{-->}[rd]\ar[r] & Y\\
U\ar[u]\ar[r] & V\ar[u]
}
\end{equation}
An equality in the category of pairs is equivalent to the existence of the diagonal in the square above.

The lemma is proven in two steps: 
1) Firstly, using observation (1)
we can replace all equalities in the diagram $\overline{\mathbb ZF}^{pair}_*(k)$ of the form $\gamma(\alpha_1)\circ\gamma(\alpha_2)=\gamma(\alpha_1\circ\alpha_2)$ by equalities in $\mathbb Z F^{pair}_*(k)$.
Each source vertex $(X,U)$ of the arrow $\alpha_2$ is replaced by the triple $(X,U)\times0\to (X,U)\times\A^1\leftarrow (X,U)\times 1$.
We denote the resulting diagram by $\gamma^\prime$.

2) Next we replace each vertex $(X,U)$ of $\gamma^\prime$ by the pair of vertices $X\leftarrow U$, we replace each arrow by a square of the form \eqref{eq:sqPair}, and add a diagonal arrow to the square for each relation of the form $\gamma^\prime(\alpha_1)\circ\gamma^\prime(\alpha_2)=\gamma^\prime(\alpha_1\circ\alpha_2)$.
\end{proof}

\begin{lemma}\label{lm:FinDescent}
Let $\Gamma^\prime\to \Gamma$ be an embedding of precategories  (in sense of def. \ref{def:precatdiag}) that is good with respect to descent (def. \ref{def:GammaGammaprimeGoodwrtDesctnt}),
and let $\gamma^\prime$ %
be a $\Gamma^\prime$-diagram in the category $\overline{\mathbb ZF}_*$ (or the category of pairs $\overline{\mathbb ZF}^{pair}_*$). %

Let $K_1$, $K_2$ be two finite field extensions of a finite field $k$,
such that $\deg K_1/k$ and $\deg K_2/k$ are relatively prime, i.e., $(\deg K_1/k, \deg K_2/k)=1$.
Suppose that there exist lift of the weak $\Gamma'$-diagram $\gamma'$ (def. \ref{def:Weak(Hom)Diagram})
to a weak $\Gamma$-diagram in $\overline{\mathbb ZF}_*(K)$ (or $\overline{\mathbb ZF}^{pair}_*(K)$) for $K = K_1, K_2$. %

Then there exists a lift of $\gamma'$ to a weak $\Gamma$-diagram in $\overline{\mathbb ZF}_*(k)$ (or $\overline{\mathbb ZF}_*^{pair}(k)$) (see def. \ref{def:Weak(Hom)Diagram}).
\end{lemma}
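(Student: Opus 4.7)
My plan is to combine the two lifts over $K_1$ and $K_2$ by transferring them back to $k$ along the extensions and averaging via a B\'ezout relation coming from the coprimality of $l_1 := \deg(K_1/k)$ and $l_2 := \deg(K_2/k)$. For each vertex $a$ of $\Gamma$ let $X_a$ denote its associated $k$-scheme (or pair). External product of $T_{K_i/k}$ from Definition \ref{def:T_K/k} with $X_a$ produces a transfer $T^a_i \in \mathbb ZF_1(X_a, (X_a)_{K_i})$, and base change gives a projection $p^a_i\colon (X_a)_{K_i}\to X_a$. For $\beta \in \overline{\mathbb ZF}_*((X_a)_{K_i},(X_c)_{K_i})$ define $\tr_i(\beta) := p^c_i \circ \beta \circ T^a_i \in \overline{\mathbb ZF}_*(X_a, X_c)$. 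Lemma \ref{sblm:prT} together with the first part of Lemma \ref{sbl:Lambda} implies that $\tr_i(c_{K_i}) \simA [\Lambda^\prime_{l_i}]\cdot c$ for any $k$-correspondence $c$, where $\cdot$ is external product on the $\pt$-factor; choosing positive integers $n_1, n_2$ with $n_1 l_1 - n_2 l_2 = 1$ (possible since $(l_1,l_2)=1$) and applying the second part of Lemma \ref{sbl:Lambda} yields $n_1 [\Lambda^\prime_{l_1}] - n_2 [\Lambda^\prime_{l_2}] = [\langle (-1)^{l_2 n_2}\rangle]$ in $\overline{\mathbb ZF}$, which represents $\pm 1$ after absorbing a single suspension.

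Writing the given lifts as $\gamma^{(i)}$ and setting $\epsilon := (-1)^{l_2 n_2}$, I define the candidate lift by
\[
\gamma(\alpha) := \epsilon\bigl(n_1 \tr_1(\gamma^{(1)}(\alpha)) - n_2 \tr_2(\gamma^{(2)}(\alpha))\bigr)
\]
for every morphism $\alpha$ of $\Gamma$. For $\alpha$ coming from $\Gamma^\prime$, $\gamma^{(i)}(\alpha) \simA \gamma^\prime(\alpha)_{K_i}$, and the previous paragraph gives $\gamma(\alpha) \simA \gamma^\prime(\alpha)$ in $\overline{\mathbb ZF}_*$ up to suspension, so $\gamma$ restricts to $\gamma^\prime$ on $\Gamma^\prime$. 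For the weak composition law, consider a relation $\alpha_1 \circ \alpha_2 = \alpha_3$ in $\Gamma$. By the descent-goodness hypothesis one of $\alpha_1, \alpha_2$ lies in $\Gamma^\prime$; suppose $\alpha_1$ does. Then $\gamma(\alpha_1) = \gamma^\prime(\alpha_1)$ is a $k$-correspondence, and target-side naturality of $\tr_i$ gives
\[
\gamma^\prime(\alpha_1) \circ \tr_i(\gamma^{(i)}(\alpha_2)) \simA \tr_i\bigl(\gamma^\prime(\alpha_1)_{K_i} \circ \gamma^{(i)}(\alpha_2)\bigr) \simA \tr_i(\gamma^{(i)}(\alpha_3)),
\]
the last step using that $\gamma^{(i)}$ is a weak $\Gamma$-diagram over $K_i$. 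Taking the B\'ezout combination yields $\gamma(\alpha_3)$ when $\alpha_3\notin\Gamma^\prime$ and $\gamma^\prime(\alpha_3)$ when $\alpha_3\in\Gamma^\prime$.

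The pair case proceeds identically after Lemma \ref{rem:represGamD}(2) is invoked to translate weak $\Gamma$-diagrams in $\overline{\mathbb ZF}_*^{pair}$ into weak diagrams in $\overline{\mathbb ZF}_*$ with open-embedding constraints that the transfers $T^a_i$ preserve. The step I expect to be most delicate is the target-side naturality of $\tr_i$ used above, namely the identity $T^a_i \circ c \simA c_{K_i} \circ T^b_i$ in $\overline{\mathbb ZF}_*$ for every $k$-correspondence $c\colon X_b \to X_a$. This is precisely where the descent-goodness hypothesis earns its keep: it guarantees that in every composition relation one factor is already defined over $k$, so a transfer only ever has to be commuted past a $k$-morphism, never past another B\'ezout combination of transfers.
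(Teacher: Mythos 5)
Your overall strategy is the paper's: transfer the $K_i$-lifts back to $k$ along the correspondences $T_{K_i/k}$, use coprimality to make the total transfer an equivalence, and use descent-goodness so that a transfer only has to be commuted past a $k$-defined morphism (your ``target-side naturality'' $T^a_i\circ c\simA c_{K_i}\circ T^b_i$ is exactly the external-product identity the paper uses in its two commuting diagrams, and it does hold). The gap is in the arithmetic step. You implement the B\'ezout relation by the \emph{integer} combination $n_1\tr_1-n_2\tr_2$ and claim $n_1[\Lambda^\prime_{l_1}]-n_2[\Lambda^\prime_{l_2}]=[\langle(-1)^{l_2n_2}\rangle]$, citing Lemma \ref{sbl:Lambda}. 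That lemma says something different: it is about the \emph{composites} $\Lambda^\prime_{l_1}\circ\Lambda^\prime_{n_1}-\Lambda^\prime_{l_2}\circ\Lambda^\prime_{n_2}$, i.e.\ about the products $(l_1n_1)_\epsilon-(l_2n_2)_\epsilon$ in $GW$-language, not about $n_1\cdot(l_1)_\epsilon-n_2\cdot(l_2)_\epsilon$. These differ because $n\cdot\langle 1\rangle\neq n_\epsilon$ unless $-1$ is a square. Concretely, for $l_1=3$, $l_2=4$, $n_1=3$, $n_2=2$ your combination is $3(h+\langle1\rangle)-2\cdot 2h=3\langle1\rangle-h$, which has rank $1$ but nontrivial discriminant, so over $k=\mathbb F_q$ with $q\equiv 3\ (\mathrm{mod}\ 4)$ its image in $GW(k)$ is $\langle a\rangle$ for a nonsquare $a$, not $\langle(-1)^{l_2n_2}\rangle=\langle1\rangle$; multiplying by your $\epsilon$ does not repair this. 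Such a class is not ``a suspension'', so your lift does not restrict to $\gamma^\prime$ on $\Gamma^\prime$, and the composition check breaks precisely in the cases where the composite $\alpha_3$ lies in $\Gamma^\prime$ (your case analysis only treats composites given by the formula).

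The fix is exactly what the paper does: weight the transfers quadratically rather than integrally, i.e.\ replace $n_i\,\tr_i(\beta)$ by $\tr_i(\beta)$ precomposed with $\Lambda_{n_i}\boxtimes\id$ (equivalently, use $T_{K_i/k}\circ\Lambda_{n_i}$ as in the correspondence $L$ of \eqref{eq:invlift}), and set $\gamma(\alpha)=\gamma^\prime(\alpha)$ for $\alpha\in\Gamma^\prime$ instead of using the formula there. Then Lemma \ref{sbl:Lambda} applies as stated, $[pr\circ L]=[\sigma]$ after the explicit $\langle\pm1\rangle$ twist, and the rest of your argument (naturality past $k$-morphisms, goodness of the embedding, the reduction of the pair case via Lemma \ref{rem:represGamD}) goes through and reproduces the paper's proof; the paper also packages the two transfers into a single $L\in\mathbb ZF_1(\pt_k,\Spec K_1\amalg\Spec K_2)$ and works over $S=\Spec K_1\amalg\Spec K_2$, but that is only a notational economy.
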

\begin{proof}
The question on the diagrams in the categories $\overline{\mathbb ZF}_*$, the case of $\overline{\mathbb ZF}^{pair}_*$ follows form the case of 
$\mathbb ZF_*$, the case of $\mathbb ZF^{pair}_*$ by the first point of lemma \ref{rem:represGamD}.
Now consider the case of $\mathbb ZF_*$, and the case of $\mathbb ZF^{pair}_*$ is similar. 

Let $l_1 = \deg K_1$, $l_2 = \deg K_2$.
Since $(l_1,l_2)=1$ there are integers $n_1, n_2$ such that $l_1 n_1 - l_2n_2=1$.
Up to remuneration of $K_1$ and $K_2$ we can assume that $n_1, n_2>0$.
Let $S=\Spec K_1\amalg \Spec K_2$, with inclusions $j_i : \Spec K_i \hookrightarrow S, i=1,2$.
Define a framed correspondence
\begin{equation}\label{eq:invlift}L = \langle (-1)^{l_1 n_1+1}\rangle \circ (j_1\circ T_{K_1/k}\circ \Lambda_{n_1} - j_2 \circ T_{K_2/k}\circ \Lambda_{n_2})\in \mathbb ZF_1(\pt_k, S),\end{equation}
and let $pr\colon S\to \Spec k$ be the projection.
Then Lemma \ref{sbl:Lambda} and Lemma \ref{sblm:prT} imply that \begin{multline}\label{eq:DescL}
[pr \circ L]=
[\langle (-1)^{l_1 n_1+1}\rangle\circ pr \circ (j_1\circ T_{K_1/k}\circ \Lambda_{n_1} - j_2 \circ T_{K_2/k}\circ \Lambda_{n_2})]
\stackrel{lm \ref{sblm:prT}}{=}\\
[\langle (-1)^{l_1 n_1+1}\rangle \circ (\Lambda_{l_1}\circ \Lambda_{n_1} - \Lambda_{l_2}\circ \Lambda_{n_2})]
\stackrel{lm \ref{sbl:Lambda}}{=}
[\sigma]\in \overline{ \mathbb ZF}_1(\pt_k,\pt_k).\end{multline}

Consider the base change $\gamma_S^\prime\colon \mathbb ZF_*(S)$ of the $\Gamma^\prime$-diagram $\Gamma^\prime$, where $\mathbb ZF_*(S)$ denotes the category of framed correspondences over $S$.
By assumption there is a lift of $\gamma^\prime_S$ to a $\Gamma$-diagram $\gamma_S\colon \Gamma\to \mathbb ZF_*(S)$.
Denote by $L_S$ the base change of $L$.

To define the required lift $\gamma$ for any morphism 
$\alpha\in \Gamma$ that is not a morphism in $\Gamma^\prime$ 
we put
\begin{equation}
\label{eq:gamma-alpha}
\gamma(\alpha)= 
\big( (id_Y\boxtimes pr) \circ \gamma_{S}(\alpha) \circ (X\times L) \big) \cdot \langle (-1)^{l_1n_1}\rangle,
\end{equation}
where $X$ and $Y$ are varieties (or pairs) that are the source and the target of $\gamma_S(\alpha)$, $X\times L_S$ is the base change of $L_S$ with respect to $X\to \pt_k$, and $\cdot$ is the external product of correspondences
(if $\alpha\in \Gamma^\prime$ we put $\gamma(\alpha)=\gamma^\prime(\alpha)$).
Denote $L_X = L\boxtimes id_{X}$, for $X\in Sm_k$.

Now let $\alpha,\beta\in \Gamma$ be a pair of morphisms such that the target of $\alpha$ is equal to the source of $\beta$.
Let $X$ and $Y$ be images of the source and target of $\alpha$ in $\overline{\mathbb ZF}_*(K)$,
and let $Y$ and $Z$ be images of the sources and target of $\beta$ in $\overline{\mathbb ZF}_*(K)$.
Since the embedding $\Gamma^\prime\to \Gamma$ is good with respect to descent,
for any such a pair either $\alpha\in \Gamma^\prime$, or $\beta\in \Gamma^\prime$.

Suppose $\beta\in \Gamma^\prime$, then
\begin{multline*}
[\gamma(\beta) \circ  \gamma(\alpha)]=
[\gamma^\prime(\beta) \circ  \gamma(\alpha)]=
[\gamma^\prime(\beta) \circ pr_Y\circ \gamma_S(\alpha)\circ L_X]=\\
[pr_Z\circ\gamma^\prime_S(\beta)\circ \gamma_S(\alpha)\circ L_X]=
[pr_Z\circ\gamma_S(\beta)\circ \gamma_S(\alpha)\circ L_X]=\\
[pr_Z\circ\gamma_S(\beta\circ \alpha)\circ L_X]=
[\gamma(\beta\circ \alpha)]
\end{multline*}
since we have the diagram
$$\xymatrix{
S\times X \ar[r]^{\gamma_S(\alpha)} & S\times Y \ar[r]^{\gamma_S(\beta)}^{\gamma^\prime_S(\beta)}\ar@<1ex>[d]^{pr_Y} & S\times Z\ar[d]^{pr_Z} \\
X\ar[r]^{\gamma(\alpha)}\ar[u]^{L_X} &Y\ar@<1ex>[u]^{L_Y} \ar[r]_{\gamma^\prime(\beta)}^{\gamma(\beta)} & Z.
}$$

Now suppose $\alpha\in \Gamma^\prime$, then
\begin{multline*}
[\gamma(\beta) \circ  \gamma(\alpha)]=
[\gamma(\beta) \circ  \gamma^\prime(\alpha)]=
[pr_Z\circ \gamma_S(\beta) \circ L_Y\circ \gamma^\prime(\alpha)]=\\
[pr_Z\circ \gamma_S(\beta) \circ \gamma^\prime_S(\alpha)\circ L_X]=
[pr_Z\circ \gamma_S(\beta) \circ \gamma^\prime_S(\alpha)\circ L_X]=
[pr_Z\circ \gamma_S(\beta) \circ \gamma_S(\alpha)\circ L_X]=\\
[pr_Z\circ \gamma_S(\beta\circ \alpha)\circ L_X]=
[pr_Z\circ \gamma_S(\beta\circ \alpha)\circ L_X]=
[\gamma(\beta\circ \alpha)]
\end{multline*}
since we have the diagram
$$\xymatrix{
S\times X \ar[r]^{\gamma_S(\alpha)}_{\gamma^\prime_S(\alpha)} & S\times Y \ar[r]^{\gamma_S(\beta)}\ar@<1ex>[d]^{pr_Y} & S\times Z\ar[d]^{pr_Z} \\
X\ar[r]^{\gamma(\alpha)}_{\gamma^\prime(\alpha)}\ar[u]^{L_X} &Y\ar@<1ex>[u]^{L_Y} \ar[r]^{\gamma(\beta)} & Z.
}$$
\end{proof}

\begin{corollary}\label{cor:FinDesN}
Let $\Gamma^\prime\to \Gamma$ be an embedding of precategories  (in sense of def. \ref{def:precatdiag}) that is good with respect to descent (def. \ref{def:GammaGammaprimeGoodwrtDesctnt}),
and let $\gamma^\prime$ %
be a $\Gamma^\prime$-diagram in the category $\overline{\mathbb ZF}_*(k)$ (or the category of pairs $\overline{\mathbb ZF}^{pair}_*(k)$).

Suppose that there is an integer $N$ such that for all field extensions $K/k$ of degree $\deg K/k\geq N$
there is a lift of $\gamma^\prime$
to a weak $\Gamma$-diagram (def. \ref{def:Weak(Hom)Diagram}) in the category $\overline{\mathbb ZF}_*(K)$ (or $\overline{\mathbb ZF}_*^{pair}(K)$).

Then there is such a lift of $\gamma^\prime$ 
over $k$.
\end{corollary}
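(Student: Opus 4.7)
The corollary should follow essentially immediately from Lemma \ref{lm:FinDescent}, once one observes that over a finite base field $k$ (which is the implicit setting of the section) one can always produce two finite extensions of $k$ of coprime degrees, each at least $N$. The plan is therefore just to exhibit such a pair and invoke the lemma.

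First, I would choose two distinct prime numbers $p_1, p_2$ with $p_1, p_2 \geq N$; such primes exist by the infinitude of primes. Then $\gcd(p_1,p_2)=1$ and both are $\geq N$. Since $k$ is a finite field, for each $i=1,2$ there is a (unique, up to isomorphism) finite extension $K_i/k$ of degree $p_i$. By the hypothesis of the corollary, applied to each of the extensions $K_1/k$ and $K_2/k$ separately, there exist lifts of $\gamma^\prime$ to weak $\Gamma$-diagrams in $\overline{\mathbb ZF}_*(K_i)$ (respectively $\overline{\mathbb ZF}_*^{pair}(K_i)$) for $i=1,2$.

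Now the pair $(K_1,K_2)$ satisfies exactly the hypothesis of Lemma \ref{lm:FinDescent}: the embedding $\Gamma^\prime\to\Gamma$ is good with respect to descent (by assumption), the degrees $\deg K_1/k = p_1$ and $\deg K_2/k = p_2$ are coprime, and a lift of $\gamma^\prime$ exists over each $K_i$. Applying the lemma produces a weak $\Gamma$-diagram in $\overline{\mathbb ZF}_*(k)$ (or $\overline{\mathbb ZF}_*^{pair}(k)$) extending $\gamma^\prime$, which is exactly the lift claimed by the corollary.

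There is essentially no obstacle: all of the content lies in Lemma \ref{lm:FinDescent}, and the only thing to check beyond invoking it is that two coprime integers $\geq N$ actually occur as degrees of extensions of $k$. For finite $k$ this is automatic since every positive integer occurs as the degree of some finite extension, and two primes above $N$ suffice to give coprimality. If one wanted to remove any implicit finiteness hypothesis on $k$, the statement would not follow from Lemma \ref{lm:FinDescent} as stated, so I would keep the proof confined to the finite-field setting in which the preceding constructions $T_{K/k}$ and $\Lambda_l$ are defined.
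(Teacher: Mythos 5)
Your proposal is correct and follows essentially the same route as the paper: the paper's proof likewise just picks two (separable) extensions $K_1/k$, $K_2/k$ of coprime degrees exceeding $N$ and invokes Lemma \ref{lm:FinDescent}. Your extra observation that such a pair exists for finite $k$ (e.g.\ via two primes $\geq N$), and that the argument is tied to the finite-field setting in which $T_{K/k}$ and Lemma \ref{lm:FinDescent} are formulated, is a reasonable filling-in of a detail the paper leaves implicit.
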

\begin{proof}
Consider any two separable extensions $K_1/k$, $K_2/k$ such that 
$\deg K_1,\deg K_2>N$, $(\deg K_1,\deg K_2)=1$.
Then by assumption the required lift of the diagram $\gamma^\prime$ over $K_1$ and $K_2$,
so the claim follows from lemma \ref{lm:FinDescent}.
\end{proof}

\begin{corollary}[Tsybyshev, Panin]\label{cor:FinDesInf}
Let $\Gamma^\prime\to \Gamma$ be an embedding of finite precategories (in sense of def. \ref{def:precatdiag}) that is good with respect to descent (def. \ref{def:GammaGammaprimeGoodwrtDesctnt}),
and let $\gamma^\prime$ %
be a $\Gamma^\prime$-diagram in the category $\overline{\mathbb ZF}_*(k)$ (or the category of pairs $\overline{\mathbb ZF}^{pair}_*(k)$).

Suppose that 
for all infinite field extensions $K/k$
there is a lift of $\gamma^\prime$
to a weak $\Gamma$-diagram (def. \ref{def:Weak(Hom)Diagram}) in the category $\overline{\mathbb ZF}_*(K)$ (or $\overline{\mathbb ZF}_*^{pair}(K)$).

Then there is such a lift of $\gamma^\prime$ 
over $k$.

In other words, if the weak lifting property (in sense of def. \ref{def:WeakLP}) with respect to the diagram $\gamma^\prime$ and the embedding $\Gamma^\prime\to \Gamma$ holds in the category 
$\overline{\mathbb ZF}_*(K)$ (or $\overline{\mathbb ZF}_*^{pair}(K)$) for any infinite field extension $K/k$ then it holds in $\overline{\mathbb ZF}_*(k)$ (or $\overline{\mathbb ZF}_*^{pair}(k)$).
\end{corollary}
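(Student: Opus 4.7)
The plan is to reduce to Lemma~\ref{lm:FinDescent} by extracting, from the infinite-extension hypothesis, two lifts of $\gamma'$ over finite extensions of $k$ of coprime degrees.

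First, I apply the hypothesis to $K=\bar k$, an algebraic closure of $k$, which is an infinite field since $k$ is a finite field. This yields a weak $\Gamma$-diagram $\gamma_{\bar k}$ extending $\gamma'_{\bar k}$. The key point is that since $\Gamma$ is finite, $\gamma_{\bar k}$ consists of finitely many explicit framed correspondences (one for each morphism of $\Gamma\setminus\Gamma'$) together with finitely many $\A^1$-homotopies witnessing the weak-category relations of Definition~\ref{def:Weak(Hom)Diagram}. Each such datum is finitely presented over $\bar k$: an \'etale neighborhood, a finite list of regular functions and a morphism to a fixed smooth scheme (all over $\bar k$), and similarly for the homotopies, which are framed correspondences on $X\times\A^1$. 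Since $\bar k$ is the directed colimit of its finite subextensions $L/k$, and the objects of $\gamma'$ are already defined over $k$, standard spreading-out produces a finite $L_1\subset\bar k$ over which $\gamma_{\bar k}$ descends to a lift $\gamma_{L_1}$ of $\gamma'_{L_1}$ in $\overline{\mathbb ZF}_*(L_1)$.

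Next, set $m=[L_1:k]$, and let $F\subset\bar k$ be the compositum of all finite subextensions of $\bar k/k$ whose degree over $k$ is coprime to $m$. For $m\ge 1$, infinitely many primes are coprime to $m$, so $F$ contains finite subfields of unboundedly many distinct degrees over $k$ and is therefore infinite. Applying the hypothesis to $F$ and repeating the spreading-out argument yields a finite subextension $L_2\subset F$ and a lift $\gamma_{L_2}$ of $\gamma'_{L_2}$. By the construction of $F$, the degree $[L_2:k]$ is a product of prime powers whose primes are all coprime to $m$, so $([L_2:k],[L_1:k])=1$. (If $m=1$, then $\gamma_{L_1}$ is already the required lift over $k$ and nothing more is needed.) I then apply Lemma~\ref{lm:FinDescent} to the pair $(\gamma_{L_1},\gamma_{L_2})$ with $K_1=L_1$, $K_2=L_2$, obtaining the required lift of $\gamma'$ in $\overline{\mathbb ZF}_*(k)$ (respectively $\overline{\mathbb ZF}_*^{pair}(k)$).

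The main obstacle is the spreading-out used twice above: one must verify that every explicit ingredient of a weak $\Gamma$-diagram -- \'etale neighborhoods, supports, framing functions, maps to the targets, open subschemes in the pair case, and the $\A^1$-homotopies realizing the $\overline{\mathbb ZF}_*$-relations -- descends coherently to a single finite subextension, and that the descended data continues to satisfy the same weak-category relations there. Finiteness of $\Gamma$ is essential: it ensures that only finitely many pieces of data must be descended, so their fields of definition admit a common finite compositum inside $\bar k$.
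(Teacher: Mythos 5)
Your proposal is correct and follows essentially the same route as the paper: use finiteness of $\Gamma$ to spread a lift over an infinite extension down to a finite subextension, produce two such lifts over finite extensions of coprime degree, and conclude by Lemma \ref{lm:FinDescent}. The only (immaterial) difference is the choice of infinite extensions: the paper feeds in two towers of $p^l$- and $q^l$-degree extensions for distinct primes $p,q$ so that coprimality is automatic, whereas you first use $\bar k$ to get some $L_1$ and then a tailored compositum $F$ whose finite subextensions all have degree coprime to $[L_1:k]$ — both are valid, and your spreading-out step is exactly the argument the paper leaves implicit.
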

\begin{proof}
The claim follows from lemma \ref{lm:FinDescent} if we consider the towers of extensions of degrees $p$ and $q$ for two different prime numbers, prime to the characteristic. 

In detail, let $K^\prime_1 = \varinjlim_l k(\xi^{1/p^l})$ and $K_2^\prime = \varinjlim_l k(\xi^{1/q^l})$ 
be the infinite extensions of $k$ for a pair of different prime numbers $p,q$, $(p,\chark k)=1,\, (q,\chark k)=1$.
Then by assumption there is a lift of the $\Gamma^\prime$-diagram $\gamma^\prime$ to a weak $\Gamma$-diagram. Since the precategory $\Gamma$ is finite it follows that there is a lift of $\gamma^\prime$ to a weak $\Gamma$-diagram over a finite field extensions $K_1=k(\xi^{1/p^l})$ and $K_2=k(\xi^{1/q^l})$. Now the claim follows from lemma \ref{lm:FinDescent}.
\end{proof}

\begin{lemma}\label{lm:bigK-Aff}
Let $B\subset \A^n_k$ be a closed subscheme in affine $n$-space over some field $k$.
Then there is $N\in \mathbb Z$ such that for all field extensions $K/k$ of degree $\deg K/k>N$ there is a $K$-rational point $p\in \A^n_K - B_K$, where $B_K=B\times \Spec K$.
\end{lemma}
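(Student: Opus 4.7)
The plan is to reduce the problem to producing a $K$-point on which a single nonzero polynomial does not vanish, and then handle that by an elementary degree-based induction. Since $\A^n_K\setminus B_K$ is empty when $B=\A^n_k$, the meaningful case is $B\subsetneq\A^n_k$, in which case the defining ideal $I(B)\subset k[x_1,\dots,x_n]$ contains a nonzero polynomial $f$. I would fix such an $f$ and set $d=\deg f$. Any $K$-rational point $(a_1,\dots,a_n)\in K^n$ with $f(a_1,\dots,a_n)\neq 0$ automatically lies in $\A^n_K\setminus B_K$, since $f$ vanishes on $B$ and hence on $B_K$. So it suffices to produce such a point once $[K:k]$ is sufficiently large.

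The main step is the following elementary claim: if $K$ is a field with $|K|>d$, then every nonzero $g\in K[x_1,\dots,x_n]$ of total degree at most $d$ admits a non-root in $K^n$. I would prove this by induction on $n$. For $n=1$ the polynomial $g$ has at most $d$ roots in $K$, so $|K|>d$ immediately furnishes a non-root. For $n>1$, I write $g$ as a polynomial in $x_n$ with coefficients in $K[x_1,\dots,x_{n-1}]$, apply the inductive hypothesis to its nonzero leading coefficient to obtain $a_1,\dots,a_{n-1}\in K$ at which that coefficient does not vanish, and then apply the $n=1$ case to $g(a_1,\dots,a_{n-1},x_n)\in K[x_n]$, which is nonzero of degree at most $d$.

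It remains to choose $N$ so that $[K:k]>N$ forces $|K|>d$. If $k$ is finite with $|k|=q$, any finite algebraic extension of degree $r$ has $|K|=q^r$, so any $N$ with $q^N>d$ works; infinite-degree extensions of a finite field are at least countably infinite, hence also have $|K|>d$. If $k$ is already infinite, then $|K|\geq|k|$ is infinite for every extension $K/k$, so any $N$ (for instance $N=0$) suffices. I do not expect any serious obstacle: the argument is essentially a Schwartz--Zippel style counting, and the only minor care needed is to separate the finite- and infinite-base-field cases when choosing $N$.
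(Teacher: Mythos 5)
Your proof is correct and takes essentially the same route as the paper: choose a nonzero $f$ vanishing on $B$ (implicitly assuming $B\subsetneq\A^n_k$, as the paper does), and find a $K$-point where $f\neq 0$ by eliminating one variable at a time using the bound on the number of roots of a nonzero one-variable polynomial, once $\deg K/k$ is large enough to force $|K|>\deg f$. The paper's argument is just a terser sketch of this same counting; your write-up fills in the induction and the choice of $N$.
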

\begin{proof}
Let $f\in k[t_1,\dots,t_n]$ be a function, $f\big|_B=0$, $f\neq 0$.
Assume $f\vert_{\mathbb A^{n-1}_K\times P} = 0$ for any $P \in \mathbb A^1_k(K)$. This is impossible if $f$ is nonzero and $\deg K/k \gg 0$,
since the number of rational roots of a nonzero one-variable polynomial is not greater than its degree.
\end{proof}
%

%


\section{Injectivity and excision theorems for framed presheaves over a field.}

In the section we apply the result of the previous section to extend the injectivity and excision theorems for homotopy invariant linear stable framed presheaves \cite[]{} to the finite base field case. 

\begin{theorem}[injectivity on local schemes]
\label{th:InjLoc}
For a field $k$ let $X \in Sm_k$, $x \in X$ be a point, $U = \Spec(O_{X,x})$, $i\colon D\to X$ be a proper closed subset. Then there exists an integer $N$ and a morphism $r\in \mathbb ZF_N(U,X-D)$ such that
$[r]\circ [j] = [can]\circ [\sigma_U^N]$
in $\overline{\mathbb ZF}_N(U ,X)$ with $j \colon  X-D\to X$ the open inclusion and $can\colon U \to X$ the canonical morphism.
\end{theorem}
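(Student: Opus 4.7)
The plan is to reduce to the known infinite-field case via the finite descent Corollary \ref{cor:FinDesInf}. Over any infinite extension $K/k$, this injectivity statement for essentially smooth local schemes is already established in \cite{GP-HIVth} (this is one of the central technical lemmas of that paper, where the hypothesis that the base field be infinite enters precisely through Garkusha--Panin's geometric moving constructions). So the task reduces to repackaging the conclusion as a weak lifting property for a suitable embedding of finite precategories and feeding it to Corollary \ref{cor:FinDesInf}.

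Concretely, I introduce the finite precategory $\Gamma'$ with three vertices $U$, $X-D$, $X$ and two arrows $j\colon X-D\to X$ and $can\colon U\to X$ (with empty composition data), and the precategory $\Gamma$ obtained by adjoining a single arrow $r\colon U\to X-D$ together with the composition relation $j\circ r=can$. This embedding is good with respect to descent in the sense of Definition \ref{def:GammaGammaprimeGoodwrtDesctnt}: the only composable pair in $\Gamma$ that is not already in $\Gamma'$ is $(j,r)$, whose first component $j$ lies in $\Gamma'$. Let $\gamma'$ be the $\Gamma'$-diagram in $\overline{\mathbb ZF}_*(k)$ that sends the vertices to the (essentially) smooth $k$-schemes $U$, $X-D$, $X$ and the arrows to the framed correspondences $[j]$ and $[can]$ of degree zero. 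Then a weak $\Gamma$-diagram in $\overline{\mathbb ZF}_*(-)$ lifting $\gamma'$ is exactly the data of a class $[r]\in\overline{\mathbb ZF}(U,X-D)$ with $[j]\circ [r]=[can]$ in $\overline{\mathbb ZF}(U,X)$, which, after choosing a representative at some finite suspension level $N$, is the conclusion of the theorem.

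For each infinite extension $K/k$, the base change $\gamma'_K$ admits a weak $\Gamma$-diagram lift in $\overline{\mathbb ZF}_*(K)$: one applies the original Garkusha--Panin injectivity theorem component by component to the essentially smooth local scheme obtained by localizing $X_K=X\times_k\Spec K$ at each point lying over $x$ (the finitely many points of $\Spec(\mathcal O_{X,x}\otimes_k K)$), and assembles the resulting morphisms $r_K$. Applying Corollary \ref{cor:FinDesInf} to $\gamma'$ and to the embedding $\Gamma'\hookrightarrow\Gamma$ then produces the desired lift over $k$, i.e.\ a class $[r]\in\overline{\mathbb ZF}(U,X-D)$ with $[j]\circ [r]=[can]$. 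Choosing any explicit representative of $[r]$ in $\mathbb ZF_N(U,X-D)$ for $N$ large enough gives the equation $[r]\circ[j]=[can]\circ[\sigma_U^N]$ in $\overline{\mathbb ZF}_N(U,X)$.

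The main obstacle in this argument is bookkeeping rather than geometry: one must verify that the infinite-field theorem of \cite{GP-HIVth} really does produce the lift in the stabilized category for the essentially smooth scheme $U$ and that the base change $U_K$, which may split as a disjoint union of local schemes, is handled component-wise in a way compatible with the descent formalism (which wants a fixed vertex set). Once this is set up, no further geometric input is needed: the Tsybyshev--Panin descent packet of Section \ref{sect:Finite Descent} carries the construction across the finite-field case automatically.
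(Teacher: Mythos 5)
Your proposal takes essentially the same route as the paper: its proof of Theorems \ref{th:InjLoc}--\ref{th:etex} encodes each statement as a weak lifting property for precisely the precategory embedding you describe (see Lemma \ref{lm:Diagrams}, whose diagram for Theorem \ref{th:InjLoc} is your $\Gamma'\hookrightarrow\Gamma$ with the arrow $U\to X$ given by $[can]\circ[\sigma^N]$), quotes \cite{GP-HIVth} for the infinite-field case, and concludes by Corollary \ref{cor:FinDesInf}. The one inaccuracy is your side remark that $U_K$ splits as a disjoint union of local schemes --- in general it is only a connected semilocal scheme, so the infinite-field input has to be invoked in a form valid for such base changes --- but the paper's own proof passes over this same point without comment, so your argument matches it in substance.
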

\begin{theorem}[injectivity on affine line] 
\label{th:InjA1}
For a field $k$ let $U \subset \A^1_k$ be an open subset and let $i \colon  V\to U$ be a non-empty open subset. Then there is a morphism $r\in\mathbb ZF_1(U,V)$ such that $[i]\circ [r]=[\sigma_U]\in \overline{\mathbb ZF}_1(U,U)$.
\end{theorem}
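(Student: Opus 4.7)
The plan is to deduce the theorem from the infinite-field analogue by means of the finite descent result established in the previous section (Corollary \ref{cor:FinDesInf}). Accordingly, the only nontrivial work is to encode the existence of the retraction $r$ as a weak lifting problem for a finite precategory in the sense of Definition \ref{def:WeakLP}.

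I set up the precategories as follows. Let $\Gamma$ be the finite precategory with objects $\{v_U, v_V\}$, morphisms $\iota\colon v_V\to v_U$, $\rho\colon v_U\to v_V$, $s\colon v_U\to v_U$, and the single composition rule $\iota\circ\rho=s$. Let $\Gamma^\prime\subset\Gamma$ be the sub-precategory with the same objects, morphisms $\{\iota,s\}$, and no composable pairs. The unique composable pair $(\iota,\rho)$ of $\Gamma$ contains $\iota\in\Gamma^\prime$, so the embedding $\Gamma^\prime\hookrightarrow\Gamma$ is good with respect to descent in the sense of Definition \ref{def:GammaGammaprimeGoodwrtDesctnt}. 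Define the weak $\Gamma^\prime$-diagram $\gamma^\prime$ in $\overline{\mathbb{ZF}}_*(k)$ by $v_U\mapsto U$, $v_V\mapsto V$, $\iota\mapsto[i]$, and $s\mapsto[\sigma_U]$; this is trivially well-defined since $U_{\Gamma^\prime}=\emptyset$. A lift of $\gamma^\prime$ to a weak $\Gamma$-diagram in $\overline{\mathbb{ZF}}_*(K)$ is then precisely a class $[r_K]\in\overline{\mathbb{ZF}}_1(U_K,V_K)$ satisfying $[i_K\circ r_K]=[\sigma_{U_K}]$.

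Over every infinite field extension $K/k$, the required $r_K$ is supplied by the injectivity statement for framed presheaves on the affine line proved by Garkusha and Panin in \cite{GP-HIVth}, applied to the open inclusion $i_K\colon V_K\to U_K$ obtained by base change (the base changes of open subschemes of $\A^1_k$ stay open in $\A^1_K$ and the suspension morphism is stable under base change, so the base-changed $\gamma^\prime$ over $K$ coincides with the datum to which \cite{GP-HIVth} applies).

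Finally, I apply Corollary \ref{cor:FinDesInf} to the finite embedding $\Gamma^\prime\hookrightarrow\Gamma$ and the weak diagram $\gamma^\prime$: the hypothesis is verified by the previous paragraph, and the conclusion produces a weak $\Gamma$-diagram in $\overline{\mathbb{ZF}}_*(k)$ lifting $\gamma^\prime$; its value on $\rho$ is a class in $\overline{\mathbb{ZF}}_1(U,V)$ represented by the desired $r\in\mathbb{ZF}_1(U,V)$. The main point to verify is that the infinite-field input really gives a retraction with a single suspension (membership in $\mathbb{ZF}_1$ rather than $\mathbb{ZF}_N$ for some large $N$); for the affine line this sharper form is exactly what is proved in \cite{GP-HIVth}, so no further work is needed.
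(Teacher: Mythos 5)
Your proposal follows the paper's own proof essentially verbatim: the paper encodes Theorem \ref{th:InjA1} as exactly this weak lifting property for a descent-good embedding of finite precategories (see Lemma \ref{lm:Diagrams}), invokes \cite[th.~2.9]{GP-HIVth} over infinite extensions, and concludes by Corollary \ref{cor:FinDesInf}. The only caveat, shared with the paper, concerns your last sentence: the descent construction composes with the level-one correspondence $L$, so it returns $r$ only at a higher framing level with the relation $[i]\circ[r]=[\sigma_U]$ holding after stabilization; the sharper level-one statement is instead covered by the paper's later remark that the original proof of \cite[th.~2.9]{GP-HIVth} never uses infiniteness of the base field, so it needs no descent at all.
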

\begin{theorem}[Zariski excision on affine line]
\label{th:ZarexA1}
For a field $k$ let $U \subset \A^1_k$ be an embedding. 
Let $i\colon V \to U$ be an open inclusion with $V$ non-empty. 
Let $S \subset V$ be a closed subset. 
Then there are morphisms $r \in\mathbb ZF_1((U,U -S),(V,V -S))$ and $l \in \mathbb ZF_1((U,U -S),(V,V -S))$ such that
$$[i] \circ [r]] = [\sigma_U ] \text{ and } [i] \circ [l] = [\sigma_V ]$$
in $\overline{\mathbb ZF}^{pair}_1((U,U -S),(U,U -S))$ and $\overline{\mathbb ZF}^{pair}_1((V,V -S),(V,V -S))$ respectively.
\end{theorem}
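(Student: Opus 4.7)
The plan is to reduce to the case of an infinite base field, where the statement is the classical Zariski excision result for framed correspondences on $\A^1$ due to Garkusha--Panin \cite{GP-HIVth}. For a finite base field $k$ the reduction is carried out via the finite descent machinery developed above, specifically Corollary \ref{cor:FinDesInf}.

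The first step is to repackage the conclusion of the theorem as a weak lifting problem for a suitable embedding of finite precategories. I would build a finite precategory $\Gamma$ whose objects are the two pairs $(U,U-S)$ and $(V,V-S)$ and whose morphisms are the open inclusion $i\colon(V,V-S)\to(U,U-S)$, the suspension endomorphisms $\sigma_U$, $\sigma_V$, and the two new arrows $r,l\colon(U,U-S)\to(V,V-S)$, with the composition data $i\circ r=\sigma_U$ and the second identity of the theorem. Let $\Gamma'\subset\Gamma$ be the sub-precategory obtained by deleting $r$ and $l$ together with their compositions, so that $\Gamma'$ retains the objects, the inclusion $i$, and the suspensions. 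Every composition in $\Gamma$ that involves a new arrow has the factor $i\in\Gamma'$, so the embedding $\Gamma'\hookrightarrow\Gamma$ is good with respect to descent in the sense of Definition \ref{def:GammaGammaprimeGoodwrtDesctnt}. Define $\gamma'\colon\Gamma'\to\overline{\mathbb ZF}^{pair}_*(k)$ using the given data $U$, $V$, $S$, $i$, $\sigma$ over $k$.

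For every infinite field extension $K/k$, the base change $\gamma'_K$ lifts to a weak $\Gamma$-diagram in $\overline{\mathbb ZF}^{pair}_*(K)$: the existence of the lifted morphisms $r_K$, $l_K$ witnessing the excision identities over $K$ is precisely the classical Zariski excision on $\A^1$ applied to $U_K$, $V_K$, $S_K$ \cite{GP-HIVth}. Corollary \ref{cor:FinDesInf} then produces a weak $\Gamma$-diagram lifting $\gamma'$ over $k$, and reading off the images of $r$ and $l$ gives the morphisms demanded by the theorem.

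The principal point to verify is that the precategory $\Gamma$ above is genuinely good with respect to descent, because the descent construction of Lemma \ref{lm:FinDescent} (formula \eqref{eq:gamma-alpha}) produces each lifted arrow by composing with the external product $L_X$ and a projection from $S$, and this operation is compatible with the composition law only when one of the two composed arrows already lives in $\Gamma'$. Since $r$ and $l$ appear only in compositions against the open inclusion $i\in\Gamma'$, no composition of two new arrows arises, and the descent formula produces the required weak $\Gamma$-diagram over $k$ with no further modification.
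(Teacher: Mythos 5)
Your proposal is correct and follows essentially the same route as the paper: encode the two excision identities as a weak lifting property for an embedding of finite precategories that is good with respect to descent (the dashed arrows $r,l$ only ever compose against the inclusion $i\in\Gamma'$), invoke the Garkusha--Panin excision theorem over every infinite extension, and conclude over $k$ by Corollary \ref{cor:FinDesInf}. (The paper additionally remarks that for this particular theorem the original proof in \cite{GP-HIVth} never uses infiniteness, so the descent step is not strictly needed here, but your argument matches the paper's uniform treatment of Theorems \ref{th:InjLoc}--\ref{th:etex}.)
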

\begin{theorem}[\'{e}tale excision]\label{th:etex}
Let $S \subset X$ and $S^\prime \subset X^\prime$ be closed subsets. Let
$$\xymatrix{
V^\prime ￼ \ar[r]\ar[d]& X^\prime \ar[d]^{\Pi}\\
V \ar[r]& X
}$$
be an elementary distinguished square with $X$ and $X^\prime$ affine $k$-smooth. Let $S = X -V$ and $S^\prime = X^\prime - V^\prime$ be closed subschemes equipped with reduced structures. Let $x \in S$ and $x^\prime \in S^\prime$ be two points such that $\Pi(x^\prime) = x$. Let $U = \Spec(\mathcal O_{X,x})$ and $U^\prime= \Spec(O_{X^\prime,x^\prime} )$. Let $\pi : U^\prime \to U$ be the morphism induced by $\Pi$.

Under the notation above there is an integer $N$ and a morphism $r\in \mathbb ZF_N((U,U -S),(X^\prime,X^\prime-S^\prime))$ such that
$$[\Pi] \circ [r] = [can] \circ [\sigma_U^N ]$$
in $\overline{\mathbb ZF}^{pair}_N((U,U -S),(X,X -S))$, where $can \colon U \to X$ is the canonical morphism.

There are an integer $N$ and a morphism $l \in \mathbb ZF_N((U,U -S),(X^\prime,X^\prime-S^\prime))$ such that
$$[l]\circ [\pi]=[can^\prime]\circ [\sigma_U^{N^\prime}]$$
in $\overline{\mathbb ZF}_N^{pair}((U^\prime,U^\prime -S^\prime),(X^\prime,X^\prime-S^\prime))$ with 
$can^\prime\colon U^\prime\to X^\prime$ the canonical morphism.
\end{theorem}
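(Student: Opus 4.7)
The plan is to encode each of the two statements as a weak lifting property (Definition \ref{def:WeakLP}) for an appropriate embedding of finite precategories in the sense of Definitions \ref{def:precatdiag}--\ref{def:GammaGammaprimeGoodwrtDesctnt}, and then invoke Corollary \ref{cor:FinDesInf} to reduce to the case of an infinite base field, where the theorem is already established by Garkusha and Panin in \cite{GP-HIVth}.

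For the first claim, I would fix $N$ and form the finite precategory $\Gamma'_1$ whose objects are the pairs $(U,U-S)$, $(X,X-S)$, $(X',X'-S')$ and whose morphisms are the structural ones $\Pi$, $can$, $\sigma_U^N$ together with the composite $can \circ \sigma_U^N$; the diagram $\gamma'_1$ assigns to each of these its class in $\overline{\mathbb{Z}F}^{pair}_*(k)$. The enlarged precategory $\Gamma_1$ is obtained by adjoining a single new arrow $r \colon (U,U-S) \to (X',X'-S')$ subject to the relation $\Pi \circ r = can \circ \sigma_U^N$. The only composition in $\Gamma_1$ involving the new arrow is $\Pi \circ r$, and the other factor $\Pi$ lies in $\Gamma'_1$, so the embedding $\Gamma'_1 \hookrightarrow \Gamma_1$ is good with respect to descent. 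The second claim is handled in a completely parallel way: one forms $\Gamma'_2$ with objects $(U,U-S)$, $(U',U'-S')$, $(X',X'-S')$ and morphisms $\pi$, $can'$, $\sigma_{U'}^{N'}$ (together with $can'\circ\sigma_{U'}^{N'}$), and adjoins a new arrow $l \colon (U,U-S) \to (X',X'-S')$ with the single relation $l \circ \pi = can' \circ \sigma_{U'}^{N'}$; again the only new composition is $l \circ \pi$, whose other factor $\pi$ belongs to $\Gamma'_2$. For every infinite extension $K/k$, the base change of the elementary distinguished square remains an elementary distinguished square over $K$ of affine smooth $K$-schemes, so the infinite-field version of the excision theorem from \cite{GP-HIVth} supplies the desired $r_K$ (respectively $l_K$), hence a lift of the $\Gamma'$-diagram to a weak $\Gamma$-diagram over $K$. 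Corollary \ref{cor:FinDesInf} then descends the lift to $k$, yielding the required $r$ (respectively $l$).

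The main obstacle I anticipate is of a bookkeeping rather than geometric nature. The integer $N$ produced by the infinite-field theorem a priori depends on $K$, whereas the precategory $\Gamma$ would require a fixed $N$. I would circumvent this either by observing that the construction in \cite{GP-HIVth} yields $N$ uniformly in the geometry (with no dependence on the base field), or by formulating the weak lifting property in the colimit $\overline{\mathbb{Z}F}^{pair} = \varinjlim_N \overline{\mathbb{Z}F}^{pair}_N$, where the statement becomes the existence of a lift insensitive to the particular suspension index. A further subtlety is that the sources $U$ and $U'$ are essentially smooth rather than smooth, but this is absorbed by the colimit extension of $\mathbb{Z}F_*$ to $EssSm_k$ given in the notation section, which commutes with base change and with the finite presentations of the precategories above.
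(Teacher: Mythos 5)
Your proposal is correct and follows essentially the same route as the paper: the paper's proof encodes both claims of the \'{e}tale excision theorem as a weak lifting property for an embedding of finite precategories that is good with respect to descent (Lemma \ref{lm:Diagrams}) and then applies Corollary \ref{cor:FinDesInf} to reduce to the infinite-field case of \cite{GP-HIVth}. The only detail to add is that for the second claim in characteristic two the infinite-field input is supplied by \cite{DP-Sur2} rather than by \cite{GP-HIVth} alone.
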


\begin{lemma}\label{lm:Diagrams}
Any theorem of the list \ref{th:InjLoc}-\ref{th:etex} 
states a weak lifting property (in sense of def .\ref{def:WeakLP})
in the categories $\overline{\mathbb ZF}_*$ (or $\overline{\mathbb ZF}^{pair}_*$)
with respect to some diagram $\gamma^\prime\colon \Gamma^\prime\to \overline{\mathbb ZF}_*$ (or $\gamma^\prime\colon \Gamma^\prime\to \overline{\mathbb ZF}^{pair}_*$)
and an embedding $\Gamma^\prime\to \Gamma$ good with respect to a descent (see def \ref{def:GammaGammaprimeGoodwrtDesctnt}) for a finite precategories $\Gamma$ and $\Gamma^\prime$  (in sense of def. \ref{def:precatdiag}).
\end{lemma}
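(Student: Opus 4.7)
The plan is a straightforward case-by-case translation. For each theorem in the list \ref{th:InjLoc}--\ref{th:etex} I will exhibit explicit finite precategories $\Gamma^\prime\hookrightarrow\Gamma$ together with a $\Gamma^\prime$-diagram $\gamma^\prime$ so that the existence of a weak $\Gamma$-extension of $\gamma^\prime$ in $\overline{\mathbb ZF}_*$ (or $\overline{\mathbb ZF}_*^{pair}$) is exactly the statement of the theorem.

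The general recipe will be as follows. The objects of $\Gamma^\prime$ and $\Gamma$ will be the schemes, or pairs of schemes, appearing in the hypothesis. The morphisms of $\Gamma^\prime$ will be the regular morphisms given in the hypothesis together with the suspensions $\sigma^N$ and those compositions of these which occur on the right-hand sides of the stated equalities; $U_{\Gamma^\prime}$ will contain only the minimal composable pairs needed to form these named compositions, and $\gamma^\prime$ will send each generator to its evident image in $\overline{\mathbb ZF}_*$ (respectively $\overline{\mathbb ZF}_*^{pair}$). To obtain $\Gamma$ I will adjoin one new morphism for each correspondence $r$ (or $l$) whose existence the theorem claims, and exactly one new composable pair whose value in $Mor_\Gamma$ is identified with the morphism of $\Gamma^\prime$ representing the right-hand side of the stated equality. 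A weak $\Gamma$-extension of $\gamma^\prime$ then records precisely the equality of the theorem in $\overline{\mathbb ZF}_*$. For Theorem \ref{th:InjLoc}, concretely, $\Gamma^\prime$ will have three objects $U,\ X-D,\ X$ and morphisms $\{j,\ can,\ \sigma_U^N,\ can\circ\sigma_U^N\}$ with the single composition $(can,\sigma_U^N)\mapsto can\circ\sigma_U^N$; and $\Gamma$ will be obtained by adjoining one new morphism $r$ together with one new composable pair $(j,r)\mapsto can\circ\sigma_U^N$. The encodings for Theorems \ref{th:InjA1}, \ref{th:ZarexA1} (working in the pair category and adjoining two new morphisms $r,l$ with one new composition each), and \ref{th:etex} (with $\Gamma^\prime$ containing the additional data $\Pi,\pi, can, can^\prime$ and their relevant compositions with suspensions) will be entirely analogous.

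The one substantive verification will be that the embedding $\Gamma^\prime\hookrightarrow\Gamma$ is good with respect to descent in the sense of Definition \ref{def:GammaGammaprimeGoodwrtDesctnt}. This will follow by design: each newly introduced morphism $r$ (or $l$) appears in exactly one composable pair of $U_\Gamma$, and the other factor of that pair lies in $\Gamma^\prime$. Finiteness of $Ob_\Gamma$ and $Mor_\Gamma$ is immediate, and the associativity axiom of Definition \ref{def:precatdiag} is vacuous since by construction no triple of morphisms in $\Gamma$ has two consecutive compositions both in $U_\Gamma$. I expect the only mild obstacle to be the bookkeeping for Theorem \ref{th:etex}, where two distinct lifts and the four pair structures must be packaged into a single precategory; but no substantive mathematical difficulty will arise.
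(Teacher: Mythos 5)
Your proposal is correct and follows essentially the same route as the paper: the paper's proof simply exhibits, for each of Theorems \ref{th:InjLoc}--\ref{th:etex}, the triangle-shaped diagrams whose solid arrows (the given morphisms and suspensions, i.e.\ $\gamma^\prime$ on $\Gamma^\prime$) and dashed arrows (the claimed correspondences $r$, $l$) encode the stated equalities as composition relations, and then observes, exactly as you do, that no two dashed arrows are composable, so the embedding $\Gamma^\prime\to\Gamma$ is good with respect to descent.
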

\begin{proof}
The list of diagrams follows.
\begin{gather*}
\begin{array}{cc}
\text{Th. \ref{th:InjLoc}} & \text{Th. \ref{th:InjA1}}\\
\xymatrix{
X & X-D\ar[l] \\ U\ar[u]^{[\sigma^N]}\ar@{-->}[ru]
}&
\xymatrix{
U & V\ar[l] \\ U\ar[u]^{[\sigma]}\ar@{-->}[ru]
}\end{array}
\\
\text{Th. \ref{th:ZarexA1}}\\
\xymatrix{
(U,U -S) &(V,V -S)\ar[l] &(V,V -S)\\ (U,U -S)\ar[u]^{[\sigma]}\ar@{-->}[ru]_{[r]} & (U,U -S)\ar@{-->}[ru]^{[l]} & (V,V -S)\ar[u]^{[\sigma]}\ar[l] 
}\\
\text{Th. \ref{th:etex}}\\
\xymatrix{
(X,X -S) &(X^\prime,X^\prime-S^\prime)\ar[l]_{[\Pi]}  & (X^\prime,X^\prime-S^\prime) \\ (U,U -S) \ar[u]^{[ can\circ\sigma^N]}\ar@{-->}[ru]_{[r]} & (U,U -S)\ar@{-->}[ru]^{[l]} & (U^\prime,U^\prime -S^\prime)\ar[u]_{[can^\prime\circ \sigma^N ]}\ar[l]_{[\Pi]}
}
\end{gather*}
Since there is no one pair of composible dashed arrows in the following diagrams all of them are good with respect to a descent in sense of \ref{def:GammaGammaprimeGoodwrtDesctnt}.
\end{proof}

\begin{proof}[Proof of theorems \ref{th:InjLoc}-\ref{th:etex}]
The case of infinite base field for th.~\ref{th:InjLoc}-\ref{th:ZarexA1} is given by 
\cite[th. 2.11, th 2.9, th. 2.10]{GP-HIVth}.
The th.~\ref{th:etex} for the case of infinite field case is given by \cite[th. 2.13-2.14]{GP-HIVth}, and 
\cite[th 3.8]{DP-Sur2} (for the characteristic two case for the second claim)

Now by lemma \ref{lm:Diagrams} and corollary \ref{cor:FinDesInf} it follows that 
if for a field $k$ the theorems \ref{th:InjLoc}-\ref{th:etex} hold over any infinite field extension of $k$ then theorems \ref{th:InjLoc}-\ref{th:etex} hold over $k$.

In detail, by lemma \ref{lm:Diagrams} any theorem of the list \ref{th:InjLoc}-\ref{th:etex} 
states some weak lifting properties
in the categories $\overline{\mathbb ZF}_*$ (or $\overline{\mathbb ZF}^{pair}_*$).
By the results of \cite{GP-HIVth} the properties holds over any infinite field. 
Then by corollary \ref{cor:FinDesInf} any such a lifting property that holds over any infinite field holds over any finite field as well. So the claim follows.
\end{proof}
\newcommand{\scrF}{\mathcal F}
\begin{corollary}[see theorem 2.15 in \cite{GP-HIVth} for the infinite field case]\label{cor:HIFrProp}
Let $\scrF$ be a homotopy invariant stable linear framed presheaf.
Then the following properties holds:

1) Under the assumptions of theorem \ref{th:InjLoc} the homomorphisms $\eta^*\scrF(U)\to \scrF(k(X))$ and $(\eta^h)^*\scrF(U^h_x)\to \scrF(\Spec k(U^h_x)))$ are injective.

2) Under the assumptions of theorem \ref{th:InjA1}  the restriction homomorphism $\scrF(U)\to \scrF(V)$ is injective. 

3) Under the assumptions of theorem \ref{th:ZarexA1} the homomorphism 
$i^*\colon \scrF(U-S)/\scrF(U)\to \scrF(V-S)/\scrF(V)$
is an isomorphism;

4) Under the notation of theorem \ref{th:etex} the homomorphism
$pi^*\colon \scrF(U-S_x)/\scrF(U)\to \scrF(U^\prime-S^\prime_{x^\prime})/\scrF(U^\prime)$
is an isomorphism, where $S_x= \Spec \mathcal O_{S,x}$, $S^\prime_{x^\prime}= \Spec \mathcal O_{S^\prime,x^\prime}$.
\end{corollary}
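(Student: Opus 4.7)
The plan is to deduce Corollary \ref{cor:HIFrProp} from Theorems \ref{th:InjLoc}--\ref{th:etex} by applying $\scrF$ to the framed correspondences those theorems produce. The central observation is that a homotopy invariant stable linear framed presheaf descends to a contravariant functor on $\overline{\ZF}_*$ (and on its pair analogue $\overline{\ZF}^{pair}_*$), with stability forcing each suspension $(\sigma_X^{N})^{*}$ to act as the identity on $\scrF(X)$. Consequently every relation of the shape $[a]\circ[b]=[c]\circ[\sigma^N]$ supplied by the theorems translates into an equality of pullback homomorphisms on $\scrF$-values.

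For part (1), take $a\in\scrF(U)$ whose image in $\scrF(k(X))$ is zero, represent $a$ by some $a_V\in\scrF(V)$ on an open neighborhood $V\subset X$ of $x$, and shrink $V$ so that $a_V|_{V-D}=0$ for a proper closed $D\subset V$. Theorem \ref{th:InjLoc} applied to $(V,D)$ then yields an $r$ satisfying $r^{*}\circ j^{*} = can^{*}$ after applying $\scrF$, hence $can^{*}(a_V) = r^{*}(j^{*}(a_V)) = 0$ and $a=0$; the Henselian case is identical with $V$ replaced by $U^h_x$. Part (2) is immediate: the $r$ from Theorem \ref{th:InjA1} satisfies $r^{*}\circ i^{*} = \mathrm{id}_{\scrF(U)}$, so $i^{*}$ is split injective.

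Parts (3) and (4) use the same mechanism in the pair category, relying on the fact that a class in $\overline{\ZF}^{pair}_N((X,U),(Y,V))$ induces a well-defined homomorphism $\scrF(V)/\scrF(Y)\to\scrF(U)/\scrF(X)$ on cokernels of restriction maps. Theorem \ref{th:ZarexA1} produces a right inverse $r$ and a left inverse $l$ to $[i]$ in $\overline{\ZF}^{pair}_1$, which upon applying $\scrF$ become a common two-sided inverse of $i^{*}\colon \scrF(V-S)/\scrF(V)\to\scrF(U-S)/\scrF(U)$. Part (4) follows analogously by combining the two halves of Theorem \ref{th:etex}: the first gives a right inverse to $\pi^{*}$ on the cokernels and the second gives a left inverse, yielding the asserted isomorphism. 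Since the geometric substance has already been packaged into Theorems \ref{th:InjLoc}--\ref{th:etex}, no real obstacle remains; the only care needed is routine bookkeeping of the filtered colimit defining $\ZF_*(U,-)$ on the essentially smooth local scheme $U$ and of the compatibility between the pair relations and the cokernel description of $\scrF$-invariants on pairs.
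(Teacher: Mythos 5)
Correct, and essentially the same approach as the paper: the paper's proof is exactly the observation (following \cite[Theorem 2.15]{GP-HIVth}) that a homotopy invariant stable linear framed presheaf $\mathcal F$ defines a contravariant functor on $\overline{\ZF}_*$, and via $(X,U)\mapsto \mathcal F(U)/\mathcal F(X)$ on $\overline{\ZF}^{pair}_*$, with $\sigma^*=\mathrm{id}$, so that the relations of Theorems \ref{th:InjLoc}--\ref{th:etex} translate directly into the stated injectivity and isomorphism claims. The only loose phrasing is in parts (1) and (4), where the relations yield inverses only after the filtered-colimit passage over Zariski/\'etale neighbourhoods defining $U$, $U^h_x$, $U'$ (with the elementary square shrunk accordingly), which is precisely the routine bookkeeping you flag at the end.
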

\begin{proof}
The claim follows form theorems \ref{th:InjLoc}-\ref{th:etex} by the same argument as in \cite[theorem 2.15]{GP-HIVth}. 
\end{proof}

\begin{theorem}
\label{th:ZarrelexA1}
Let $X \in Sm_k$, $x \in X$ be a point, $W = \Spec(\mathcal O_{X,x})$. 
Let $i\colon V\subset \A^1_W$ be an open subset, $0_W\subset V$.
Then there are morphisms
$r \in \mathbb ZF_1^{pair}((\A_W^1 ,\A_W^1 -0\times W),(V,V -0\times W))$ and $l \in \mathbb ZF_1^{pair}((A_W^1 ,A_W^1 -0\times W),(V,V -0\times W))$ such that
$$[i]\circ [r]=[\sigma_{\A_W^1} ] \text{ and } [l]\circ[i]=[\sigma_V]$$
in $\overline{\mathbb ZF}^{pair}_1((\A_W^1 ,\A_W^1 -0\times W),(\A_W^1 ,\A_W^1 -0\times W))$ and $\overline{\mathbb ZF}^{pair}_1((V,V -0\times W),(V,V -0\times W))$ respectively.
\end{theorem}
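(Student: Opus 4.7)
The plan is to mirror the strategy used for Theorems \ref{th:InjLoc}--\ref{th:etex}: reformulate the statement as a weak lifting property for a finite precategory diagram in $\overline{\mathbb{ZF}}^{pair}_*$, establish it over every infinite extension of $k$, and descend to $k$ itself by Corollary \ref{cor:FinDesInf}. Writing $0_W$ for $0\times W$, the relevant embedding of precategories is the obvious relative analogue of the entry for Theorem \ref{th:ZarexA1} in Lemma \ref{lm:Diagrams}:
\[\xymatrix{
(\A^1_W,\A^1_W-0_W) & (V,V-0_W)\ar[l]_{[i]} & (V,V-0_W)\\
(\A^1_W,\A^1_W-0_W)\ar[u]^{[\sigma]}\ar@{-->}[ru]_{[r]} & (\A^1_W,\A^1_W-0_W)\ar@{-->}[ru]^{[l]} & (V,V-0_W)\ar[u]^{[\sigma]}\ar[l]_{[i]}
}\]
The dashed arrows encode the lifts to be produced; no pair of them is composable, so the embedding is good with respect to descent in the sense of Definition \ref{def:GammaGammaprimeGoodwrtDesctnt}. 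Since $W$, $\A^1_W$ and $V$ are essentially smooth over $k$, the category $\overline{\mathbb{ZF}}^{pair}_*$ from the Notation section accommodates this diagram.

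Next I would verify the weak lifting property over every infinite extension $K/k$. The base change $W_K = W\times_k K$ is a finite disjoint union of local essentially smooth $K$-schemes $W_i = \Spec\mathcal{O}_{X_K,x_i}$, one for each point $x_i$ lying over $x$, so constructing $r$ and $l$ over each $W_i$ separately and taking disjoint unions solves the problem over $K$. Over a single such $W_i$ the argument is a parametrized variant of the construction used for Theorem \ref{th:ZarexA1}: one selects a regular function on $\A^1_{W_i}$ whose vanishing locus is exactly $0\times W_i$ and is contained in $V$, assembles explicit framed correspondences $r$ and $l$ from this function together with the evident projection to $V$, and builds the required $\A^1$-homotopies witnessing $[i]\circ[r]=[\sigma_{\A^1_{W_i}}]$ and $[l]\circ[i]=[\sigma_V]$ by linear interpolation in an extra $\A^1$ parameter. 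This is the relative version of the affine-line excision already established over infinite fields in \cite{GP-HIVth}.

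Once the lifting is in hand over every infinite extension of $k$, Corollary \ref{cor:FinDesInf} delivers the desired lift over $k$. The main obstacle is the infinite-field case itself: carrying out the explicit excision construction in the relative setting, and in particular verifying that the linear $\A^1$-homotopies respect the pair structure, i.e.\ their supports never meet $\A^1_{W_i} - 0\times W_i$ along the interpolation, uniformly in the essentially smooth local base $W_i$. Once this is settled, the finite descent mechanism requires no further geometric input.
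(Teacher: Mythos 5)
Your route is genuinely different from the paper's, and as written it has real gaps. The paper proves Theorem \ref{th:ZarrelexA1} by a direct construction: sections $s$, $s'$, $\tilde s$ of $\mathcal O(n)$ on $\P^1$ over $W\times\A^1$ with prescribed restrictions (at infinity, at $0$, and on the complement of $V$), produced by Serre's theorem — which only needs $W$ affine or local, not $k$ infinite — followed by the explicit homotopies $h_{r,1}$, $h_{r,2}$ and a correction term. In particular no descent is used and the statement holds over any field directly. Your descent framing is coherent in itself (the diagram is good with respect to descent, and the authors note in a remark that pair-form statements of this kind can be descended via Corollary \ref{cor:FinDesInf}), but it does not discharge the actual content.

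Two concrete problems. First, the base-change step fails as stated: for an infinite algebraic extension $K/k$ the scheme $W_K=\Spec(\mathcal O_{X,x}\otimes_k K)$ is semilocal but in general \emph{not} a finite disjoint union of local schemes. For example, take $X=\A^1_{\mathbb F_p}$, $x$ a closed point of degree $2$ and $K\supset\mathbb F_{p^2}$: then $W_K$ is a connected domain, namely the semilocalization of $\A^1_K$ at two closed points. So "construct over each $W_i$ and take disjoint unions" is not available; you would need the relative excision over semilocal bases over $K$, which is not the statement you propose to quote. Second, the infinite-field case is precisely the theorem's substance, and you defer it: "linear interpolation in an extra $\A^1$-parameter" is not enough, since the crux is to produce framing functions and homotopies whose supports respect the pair structure, which in the paper requires sections with boundary conditions such as $s\big|_{0\times W\times\A^1}=t_\infty^{n-1}(t_0-xt_\infty)$ and $s\big|_{((\P^1\times W)\setminus V)\times\A^1}=t_0^n$, plus the second homotopy $h_{r,2}$ correcting the factor $g$; whether \cite{GP-HIVth} contains exactly this statement in the pair categories (for both the $r$- and the $l$-claim, over a local base) is asserted rather than verified. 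Note also that Lemma \ref{lm:FinDescent} and Corollary \ref{cor:FinDesInf} are set up over a finite base field, so your argument splits into cases anyway; and once the relative construction is available in the generality needed over $K$ (affine or semilocal base), the very same construction already works over $k$ itself, so the descent step adds nothing — which is exactly why the paper proves this theorem directly.
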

\begin{proof}
We follow the scheme of the arguments of \cite{AD} or \cite{DrKold-ACorr} translated to the case of framed correspondences.

To prove the first claim 
we need to construct
\[r\in \mathbb ZF_1^{pair}( (\A^1_W,\A^1_W- 0_W) , (V,V- 0_W) ),
h_r\in \mathbb ZF_1^{pair}( (\A^1_W,\A_W^1- 0_W)\times\stackrel{\lambda}{\A^1}, (\A^1_W,\A^1_W- 0_W) )\]
such that $$h_r\circ i_0=i\circ r, \quad h_r\circ i_1=\id_{(\A^1_W,\A^1_W- 0_W)}.$$

Consider the following sections:
$$\begin{array}{lll}
s\in \Gamma(\PP^1_{W\times\stackrel{x}{\A^1}}, \mathcal O(n)) &
\tilde s\in \Gamma({\PP^1}_{W \times \stackrel{x}{\A^1}  \times \stackrel{\lambda}{\A^1} } , \mathcal O(n) ) &
s^\prime \in \Gamma( {\PP^1}_{W\times\stackrel{x}{\A^1} } , \mathcal O(n-1) ) \\
\tilde s\big|_{\PP^1\times W\times\A^1\times 0}=s & &
\tilde s\big|_{\PP^1\times W\times\A^1\times 1}=(t_0-x t_\infty)s^\prime
\\ 
s\big|_{((\PP^1\times W) \setminus V)\times\A^1 }= t_0^n &
\tilde s\big|_{\infty\times W\times\A^1\times\A^1 }= t_0^n &
s^\prime\big|_{\infty\times W\times\A^1 }= t_0^{n-1}
\\
s\big|_{0\times W\times\A^1} =t_\infty^{n-1} (t_0-x t_\infty) & 
\tilde s\big|_{0\times W\times\A^1\times\A^1} =t_\infty^{n-1}(t_0-x t_\infty) & 
s^\prime\big|_{0\times W\times\A^1}=t_\infty^{n-1}
\\
&& s^\prime\big|_{Z(t_0-x t_\infty)\times W}=t_\infty^{n-1}\\
\end{array}$$
where ${[t_0\colon t_\infty]}$ denotes coordinates on $\PP^1$. 
Such sections $s$ and $s^\prime$ exist for $n$ large enough by the Serre theorem \cite[Theorem 5.2]{Hartshorne-AlG} on sections of a powers of ample bundles, 
since $U$ is affine or local, and consequently $\mathcal O(1)$ is ample on $\PP^1\times U \times \A^1$ and $\PP^1\times U\times \A^1\times\A^1$. 
Having $s$ and $s'$, we then put $\tilde s=(1 -\lambda)s+\lambda (t_0-x t_\infty)s^\prime$.

Denote by $t=t_\infty/t_0$ the coordinate on $\A^1=\PP^1-\infty$.
Define the correspondence $r$ as a correspondence of pairs given by the pair of explicit framed correspondences
\[\begin{array}{llllll}
(\stackrel{t}{V}\times\stackrel{x}{\A^1}, & Z(s), & s/t_\infty^{n}, & pr_t)&\in& 
Fr_1(W\times\stackrel{x}{\A^1},\stackrel{t}{V}),\\
((\stackrel{t}{V}-0_W)\times(\stackrel{x}{\A^1}-0), & Z(s)\times_{\stackrel{x}{\A^1}}(\stackrel{x}{\A^1}-0), & 
s/t_\infty^{n}, & pr_t^\prime)&\in& 
Fr_1(W\times(\stackrel{x}{\A^1}-0),\stackrel{t}{V}-0_W),
\end{array}\]
where 
$$pr_t\colon \stackrel{t}{V}\times\stackrel{x}{\A^1}\to \stackrel{t}{V}, pr_t^\prime\colon (\stackrel{t}{V}-0)\times(\stackrel{x}{\A^1}-0)\to \stackrel{t}{V}-0_W$$ denote 
the projections.
Let us write in short that
$$r = [(\stackrel{t}{\A^1}\times W\times\stackrel{x}{\A^1}, Z(s), s/t_\infty^{n}, pr_t)]\in 
\mathbb ZF^{pair}( W\times (\stackrel{x}{\A^1},\stackrel{x}{\A^1}- 0) , (\stackrel{t}{V},\stackrel{t}{V}- 0_W) ).$$

In a similar way define the correspondence $h$
$$
h_{r,1} = [( \stackrel{t}{\A^1}\times W\times\stackrel{x}{\A^1}\times\stackrel{\lambda}{\A^1}, Z(\widetilde s), \widetilde s/t_\infty^{n}, pr_t^2 )] 
\in 
\mathbb ZF^{pair}( W\times(\stackrel{x}{\A^1}, \stackrel{x}{\A^1}-0)\times\stackrel{\lambda}{\A^1} ,W\times(\stackrel{t}{\A^1},\stackrel{t}{\A^1}-0) ),$$
where 
$pr_t^2$ is the projection $\stackrel{t}{\A^1}\times W\times\stackrel{x}{\A^1}\times\stackrel{\lambda}{\A^1}\to W\times \stackrel{t}{\A^1}$. 

Then the properties of $s$ and $\tilde s$ above implies that
\[\begin{aligned}
h_{r,1}\circ i_0=& i\circ r;\\
h_{r,1}\circ i_1=& [(\stackrel{t}{\A^1}\times W\times\stackrel{x}{\A^1}, Z(t-x), (t-x)g , pr_t^2)] + [(\stackrel{t}{\A^1}\times W\times\stackrel{x}{\A^1}, Z(g), (t-x)g , pr_t^2)]
\end{aligned}\]
where  $g= s^\prime/t_\infty^{n-1}\in k[\A^1\times\A^1\times U]$.
By the definition of the correspondences of pairs we see that the second summand is trivial in 
$\mathbb ZF^{pair}(W\times(\A^1,\A^1- 0)\times\stackrel{\lambda}{\A^1}, W\times(\A^1,\A^1- 0))$.
Now define the correspondence in
$\mathbb ZF^{pair}( W\times(\stackrel{x}{\A^1}, \stackrel{x}{\A^1}-0)\times\stackrel{\lambda}{\A^1}, W\times(\stackrel{t}{\A^1},\stackrel{t}{\A^1}-0) )$
\[h_{r,2} = 
[(\stackrel{t}{\A^1}\times W\times\stackrel{x}{\A^1}\times\stackrel{\lambda}{\A^1}, Z(t-x)\times\stackrel{\lambda}{\A^1}, (t-x)g(1-\lambda)+(t-x) , pr_t^2)]
\]
using the fact that $g\big|_{Z(t-x)}=1$.
Then 
\[\begin{aligned}
h_{r,2}\circ i_0=& [(\stackrel{t}{\A^1}\times W\times\stackrel{x}{\A^1}, Z(t-x), (t-x)g , pr_t^2)],\\
h_{r,2}\circ i_1=& [(\stackrel{t}{\A^1}\times W\times\stackrel{x}{\A^1}, Z(t-x), (t-x) , pr_t^2)] = id_{(\A^1_W,\A^1_W\setminus 0_W)} 
\end{aligned}\]

Thus we put 
$h = h_{r,1}+h_{r,2}-[(\stackrel{t}{\A^1}\times W\times\stackrel{x}{\A^1}, Z(t-x), (t-x)g , pr_t^2)]$,
and the claim follows.

2) 
To prove the second claim we need to construct
\[
l\in \mathbb ZF_1^{pair}( (\A^1_W,\A^1_W - 0_W) , (V,V - 0_W) ),
h_l\in \mathbb ZF_1^{pair}((V,V- 0_W)\times\A^1, (V,V- 0_W) )\]
such that $h_l\circ i_0=l\circ i$ and $h_l\circ i_1=\id_{(V,V\setminus 0_U)}$. 

For $n$ large enough similarly as above using the Serre theorem  \cite[Theorem 5.2]{Hartshorne-AlG}
we find the following sections: 
\[\begin{array}{lll}
s\in \Gamma(\stackrel{[t_0\colon t_\infty]}{ \PP^1}_{W\times \stackrel{x}{\A^1}}, \mathcal O(n) ) &
\tilde s\in \Gamma(\stackrel{[t_0\colon t_\infty]}{ \PP^1}_{ \stackrel{x}{V} \times \stackrel{\lambda}{\A^1} }, \mathcal O(n) ) &
s^\prime \in \Gamma( \stackrel{[t_0\colon t_\infty]}{ \PP^1}_{ \stackrel{x}{V} } , \mathcal O(n-1) ) \\
& \tilde s\big|_{\PP^1\times V\times 0}=f & \tilde s\big|_{\PP^1\times V\times 1}=(t_0-xt_\infty)s^\prime\\ 
s\big|_{((\PP^1\times W)\setminus V)\times\A^1 }= t_0^n & \tilde s\big|_{(\A^1\setminus V)\times V \times\A^1 }= t_0^n & g\big|_{(\A^1\setminus V)\times\A^1 }= t_0^n (t_0-x t_\infty)^{-1}\\
s\big|_{0\times W\times\A^1} =t_0-x t_\infty & \tilde s\big|_{0\times V\times \A^1} =t_0-x t_\infty & s^\prime\big|_{0\times V}=t_\infty^n\\
&& s^\prime\big|_{Z(t_0-x t_\infty)\times W}=t_\infty^{n-1}
, \end{array}\]
where $g= s^\prime/t_\infty^{n-1}\in k[\A^1\times V]$. 

Next, under the same notation as above in the point (1) of the proof define
\begin{align*}
l &= (\stackrel{t}{\A^1}\times W\times\stackrel{x}{\A^1}, Z(s), s/t_\infty^{n}, pr_t) 
\in \mathbb ZF^{pair}( W\times (\stackrel{x}{\A^1},\stackrel{x}{\A^1}- 0), (\stackrel{t}{V},\stackrel{t}{V}- 0_W) ),\\ 
h_{l,1} &= (\stackrel{t}{V} \times_W \stackrel{x}{V}\times\stackrel{\lambda}{\A^1}, Z(\tilde s)\times_{\stackrel{x}{\A^1}} V, \tilde s/t_\infty^{n} , pr^3_t)
\in \mathbb ZF^{pair}((\stackrel{x}{V},\stackrel{x}{V}-0_W)\times\stackrel{\lambda}{\A^1}, (\stackrel{x}{V},\stackrel{x}{V}- 0_W) ),
\end{align*}
where $t=t_0/t_\infty$ denotes the coordinate on $\A^1=\PP^1-0$,
and 
$pr^3_t\colon \stackrel{t}{V} \times_W \stackrel{x}{V}\times\stackrel{\lambda}{\A^1} \to \stackrel{t}{V}$ is the projection.

Then the properties of $s$ and $s'$ above imply that 
\begin{multline*}
h_l\circ i_0=i\circ l;
h_l\circ i_1=[( \stackrel{t}{\A^1}\times W\times\stackrel{x}{\A^1}\times\stackrel{\lambda}{\A^1}, Z(t-x), (t-x)g ,pr_t )] + \\ [( \stackrel{t}{\A^1}\times W\times\stackrel{x}{\A^1}\times\stackrel{\lambda}{\A^1}, Z(g), (y-x)g , pr^2_t )].
\end{multline*}
The second summand is trivial by the definition of the group $\mathbb ZF(W\times({\A^1},{\A^1}-0)\times{\A^1}, W\times({\A^1},{\A^1}-0))$.
Then since $g\big|_{Z(t-x)}=1$ we can define 
\begin{multline*}
h_{r,2} = 
[(\stackrel{t}{V}\times_W\stackrel{x}{V}\times\stackrel{\lambda}{\A^1}, Z(t-x)\times\stackrel{\lambda}{\A^1}, (t-x)g(1-\lambda)+(t-x) , pr_t^2)] \in \\
\mathbb ZF^{pair}( (\stackrel{x}{V}, \stackrel{x}{V}-0_W)\times\stackrel{\lambda}{\A^1}, (\stackrel{t}{V},\stackrel{t}{V}-0) ).
\end{multline*}
Finally we put 
$h = h_{r,1}+h_{r,2}-[(\stackrel{t}{V}\times_W\stackrel{x}{V}, Z(t-x), (t-x)g , pr_t^2)]$,
and then
\begin{multline*}
h_{r}\circ i_0= l;\quad 
h_{r}\circ i_1 = [(\stackrel{t}{V}\times_W\stackrel{x}{V}, Z(t-x), (t-x)g , pr_t^2)]+\\
[(\stackrel{t}{V}\times_W\stackrel{x}{V}, Z(g), (t-x)g , pr_t^2)] -
[(\stackrel{t}{V}\times_W\stackrel{x}{V}, Z(t-x), (t-x)g , pr_t^2)]+\\
[(\stackrel{t}{V}\times_W\stackrel{x}{V}, Z(t-x), (t-x), pr_t^2)] = id_{(V,V- 0_U)}
\end{multline*}
So the claim is done.
\end{proof}
%
\begin{corollary}[see corollary 2.16 in \cite{GP-HIVth} for the infinite base field case for the second claim]
\label{cor:Cor216GP14}
Suppose that $W\in\Sm_k$ is an affine scheme or a local scheme, and let $V_1\subseteq V_2\subseteq\A^1_W$ be a pair of open subschemes such that $0_W\in V_1$. Let $i\colon V_1\subseteq V_2$ denote the inclusion.
Then, for any homotopy invariant stable linear framed presheaf $\scrF$,
the restriction homomorphism $i^*$ induces an isomorphism
\[i^*\colon \scrF (V_2\setminus 0_W)/\scrF (V_2)\xrightarrow{\cong}\scrF (V_1\setminus 0_W)/\scrF (V_1).\]
And consequently 
\[\scrF (\A^1_W\setminus 0_W)/\scrF (\A^1_W)\xrightarrow{\cong}\scrF (V\setminus 0_W)/\scrF (V).\]
where $V = (\A^1_W)_{0_W}$ is the local scheme corresponding to the closed point of the subscheme $0_W\subset \A^1_W$.
\end{corollary}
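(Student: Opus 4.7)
The plan is a two-out-of-three deduction from Theorem \ref{th:ZarrelexA1}, parallel to the infinite-field proof in \cite[Corollary 2.16]{GP-HIVth}. The essential bookkeeping is that a homotopy invariant stable linear framed presheaf $\scrF$ extends canonically to $\overline{\mathbb ZF}^{pair}_*$: on a pair $(X,U)$ its value is the cokernel $\scrF(U)/\scrF(X)$ of the restriction, and stability together with $\A^1$-invariance turns every morphism $[a]$ in $\overline{\mathbb ZF}^{pair}_*$ into a well-defined pullback on these quotients, with the suspension $[\sigma^N]$ acting as the identity.

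Armed with this, I would apply Theorem \ref{th:ZarrelexA1} twice, to the inclusions $i_j\colon V_j\hookrightarrow \A^1_W$ for $j=1,2$. Each application produces morphisms $r_j,l_j$ with $[i_j]\circ[r_j]=[\sigma_{\A^1_W}]$ and $[l_j]\circ[i_j]=[\sigma_{V_j}]$ in the appropriate $\overline{\mathbb ZF}^{pair}_1$-groups. Applying $\scrF$ converts these into two-sided inverses $r_j^*=l_j^*$ for $i_j^*$, so
\[
i_j^*\colon \scrF(\A^1_W\setminus 0_W)/\scrF(\A^1_W)\xrightarrow{\cong} \scrF(V_j\setminus 0_W)/\scrF(V_j)
\]
is an isomorphism for $j=1,2$. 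The factorization $i_1=i_2\circ i$ then yields $i^*=i_1^*\circ(i_2^*)^{-1}$, giving the first claimed isomorphism. For the consequent statement, $V=(\A^1_W)_{0_W}$ is the cofiltered limit of the open neighborhoods $V'$ of $0_W$ in $\A^1_W$, so $\scrF(V\setminus 0_W)/\scrF(V)=\colim_{V'}\scrF(V'\setminus 0_W)/\scrF(V')$; the first part identifies every transition map in this filtered system with an isomorphism, and the colimit inherits the property.

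The step requiring most care is the verification that $\scrF$ is genuinely well-defined and functorial on $\overline{\mathbb ZF}^{pair}_*$ with the restriction-cokernel as its value on a pair, so that composition of pair morphisms translates correctly on these cokernels and $r_j^*$, $l_j^*$ become honest two-sided inverses to $i_j^*$ rather than mere retractions. A secondary wrinkle is that Theorem \ref{th:ZarrelexA1} is stated for $W$ local while the present corollary allows affine $W$ as well; this is handled by observing that the Serre-theorem input used in the proof of Theorem \ref{th:ZarrelexA1} remains valid whenever $\cO(1)$ is ample on the relevant $\P^1$-bundles, which holds over any affine base, so the same construction goes through verbatim.
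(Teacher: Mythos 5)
Your proposal is correct and follows essentially the same route as the paper: apply Theorem \ref{th:ZarrelexA1} to each open $V'\ni 0_W$ to identify $\scrF(\A^1_W\setminus 0_W)/\scrF(\A^1_W)\cong\scrF(V'\setminus 0_W)/\scrF(V')$, deduce the map $i^*$ between $V_2$ and $V_1$ is an isomorphism by two-out-of-three, and pass to the filtered colimit for the local scheme $V$. Your extra care about extending $\scrF$ functorially to $\overline{\mathbb ZF}^{pair}_*$ and about the affine-versus-local base (the ampleness of $\cO(1)$ in the Serre-theorem step) only makes explicit what the paper leaves implicit.
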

\begin{proof}
It follows form the theorem \ref{th:ZarrelexA1} that 
$$\scrF (\A^1_W- 0_W)/\scrF (\A^1_W)\xrightarrow{\cong}\scrF (V^\prime\setminus 0_W)/\scrF (V^\prime),$$
for any open $V^\prime\subset \A^1_W$, $0_W\subset V$.
Now the first claim follows
since we have
$$\scrF (V_1\setminus 0_W)/\scrF (V_2)\simeq \scrF (\A^1_W- 0_W)/\scrF (\A^1_W)\xrightarrow{\cong}\scrF (V_1^\prime\setminus 0_W)/\scrF (V_1^\prime).$$
The second claim follows, since $$\scrF (V\setminus 0_W)/\scrF (V) = \varinjlim_{V^\prime\subset\A^1_W,0_W\subset V^\prime} \scrF (V^\prime\setminus 0_W)/\scrF (V^\prime).$$
\end{proof}

\section{Framed motives over finite fields}\label{sect:StrHIFrFinF}

In this section we apply the finite descent from the previous section to 
extend the theory of framed motives \cite{GP_MFrAlgVar} to the finite base field case.
The results of the theory of framed motives are
based on sequence on theorems about framed correspondences (and corresponding $S^1$-spectra of representable sheaves), 
and on (pre)sheaves with framed transfers
including 
the strictly homotopy invariance theorem proven in \cite{GP-HIVth}, 
cancellation theorem \cite{AGP-FrCanc}, 
and the so called 'cone' theorem \cite{GNP_FrMotiveRelSphere}.
The assumption on the base field to be infinite is needed 
in the strictly homotopy invariance theorem and cancellation theorem.
So to extend the results of the theory to the case if finite fields it is enough to prove the mentioned theorems for the case such fields
Let us recall this theorems.

\subsection{}

In \cite{GP_MFrAlgVar} the theory of framed motives is constructed over an infinite perfect field of characteristic different form 2.
In this section we explain how the result of the previous section extend the theory to the finite base field case.

The reason of the restrictive assumption on the base field in \cite{GP_MFrAlgVar} are the strictly homotopy invariance \cite{GP-HIVth} and cancellation theorems \cite{AGP-FrCanc}, and up to the references to these results the assumptions on the base field are not needed. 
Moreover the proof of the cancellation theorem \cite{AGP-FrCanc} 
does not use the infiniteness assumption except the references to \cite{GP-HIVth}.


In its turn for an arbitrary perfect field the strictly homotopy invariance by the arguments of \cite{GP-HIVth} 
follows form the injectivity and excision theorems \cite[theorems 2.9, 2.10, 2.11, 2.13, 2.14, 2.15]{GP-HIVth}.
This means that the text of the proofs in \cite{GP-HIVth} uses the infiniteness assumption nowhere except the references to the injectivity and excision theorems \cite[theorems 2.9, 2.10, 2.11, 2.13, 2.14, 2.15]{GP-HIVth},.

So whenever we have proven the injectivity and excision properties over finite fields, see ths \ref{th:InjLoc}-\ref{th:etex} and corollary \ref{cor:HIFrProp},
we have got the strictly homotopy invariance and cancellation theorems consequently the 
results on framed motives \cite{GP_MFrAlgVar}. 

In what follows we recall the list of steps in the mentioned above reductions and give precise references to the arguments in the mentioned sources. 
All the proofs form \cite{GP-HIVth} and \cite{AGP-FrCanc} we refer to hold word by word in the general prefect base field case.

\subsection{Strictly homotopy invariance}
\begin{theorem}[strictly homotopy invariance, see theorem 1.1 in \cite{GP-HIVth} for infinite base fields]\label{th:StrHimInv}
Any homotopy invariant linear framed $\sigma$-stable presheaf $F$ over a perfect field $k$ is strictly homotopy invariant and $\sigma$-stable.
\end{theorem}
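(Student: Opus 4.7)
The plan is to reduce Theorem \ref{th:StrHimInv} to the infinite-field case of \cite[Theorem 1.1]{GP-HIVth} by observing that the only inputs to that proof which require the base field to be infinite are the injectivity and excision theorems for framed presheaves. Since we have now established versions of those theorems over an arbitrary (in particular finite) perfect field $k$ via the finite descent technique of Section \ref{sect:Finite Descent}, the Garkusha--Panin argument goes through word by word.

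In more detail, I would proceed as follows. First, I would recall the structure of the proof in \cite{GP-HIVth}: one reduces strict homotopy invariance of a homotopy invariant $\sigma$-stable linear framed presheaf $F$ to the vanishing of certain Nisnevich cohomology groups on smooth local schemes, which in turn is reduced to (i) the injectivity of the restriction $F(U) \to F(k(X))$ on a local scheme $U = \Spec \mathcal{O}_{X,x}$ (and similar injectivity for $\A^1$-localizations), (ii) Zariski excision on the affine line and (iii) \'{e}tale excision in the relative form. These inputs appear in \cite{GP-HIVth} exactly as Theorems 2.9--2.15 loc.\ cit., and are the \emph{only} places in the arguments of \cite{GP-HIVth} where the assumption that $k$ is infinite enters.

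Next, I would observe that our Theorems \ref{th:InjLoc}--\ref{th:etex} and Corollaries \ref{cor:HIFrProp}, \ref{cor:Cor216GP14} supply exactly the statements from \cite[Theorems 2.9--2.15]{GP-HIVth} over a perfect base field $k$ without any infiniteness assumption. The mechanism, already used in the proof of Theorems \ref{th:InjLoc}--\ref{th:etex}, is Lemma \ref{lm:Diagrams} together with Corollary \ref{cor:FinDesInf}: each of these injectivity and excision statements is a weak lifting property for a finite precategory embedding good with respect to descent, so if the lifting holds over every infinite extension $K/k$ (which is the content of the corresponding GP-HIVth result) then it automatically descends to $k$.

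Finally, I would spell out the formal reduction: let $F$ be a homotopy invariant $\sigma$-stable linear framed presheaf over $k$. Copying the argument of \cite[\S 3]{GP-HIVth} verbatim and substituting references to \cite[Theorems 2.9--2.15, Corollary 2.16]{GP-HIVth} by references to our Theorems \ref{th:InjLoc}--\ref{th:etex} and Corollaries \ref{cor:HIFrProp}, \ref{cor:Cor216GP14} yields that the Nisnevich sheafification $F_{\Nis}$ is strictly $\A^1$-invariant and that $\sigma$-stability is preserved. Since all the cited arguments in \cite{GP-HIVth} are framework-level manipulations of presheaves that do not otherwise touch the residue fields of points of the smooth schemes involved, no further modification is required. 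The main obstacle, which is precisely the step the previous section was designed to handle, is ensuring that the descent process of Section \ref{sect:Finite Descent} covers the pair-category versions of the excision statements used in \cite{GP-HIVth}; this has already been addressed by our formulation of Lemma \ref{rem:represGamD} and by the inclusion of $\overline{\mathbb{ZF}}^{pair}_*$ throughout Lemma \ref{lm:FinDescent} and its corollaries.
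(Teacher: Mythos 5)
Your proposal is correct and follows essentially the same route as the paper: the paper's proof likewise deduces the theorem from Corollaries \ref{cor:HIFrProp} and \ref{cor:Cor216GP14}, noting that the reduction in \cite{GP-HIVth} (carried out in section 16 of loc.\ cit., rather than \S 3 as you cite --- a reference detail only) uses the infiniteness of the base field nowhere except through the injectivity and excision theorems now established over any perfect field.
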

\begin{proof}
As shown in \cite[section 16]{GP-HIVth}
the claim follows 
from corollary \ref{cor:HIFrProp} and corollary \ref{cor:Cor216GP14}.

\end{proof}
\begin{remark}
Let us list the sequence of lemmas and propositions
by which \cite[]{} is proven:
Namely, it is 
reduction is given by the sequences of 
\cite[16.9 16.8 16.7 16.6 16.4 16.3 16.2]{GP-HIVth}
and \cite[16.12 16.4 16.3]{GP-HIVth}.
\end{remark}

\subsection{Cancellation theorem}

\begin{theorem}[linear cancellation theorem, see theorem C in \cite{AGP-FrCanc} for infinite base fields]\label{th:canc}
For a prefect base field $k$ the natural homomorphism of complexes of abelian groups
\begin{equation}\label{eq:Canc}
\mathbb ZF(X\times\Delta^\bullet,Y)\to \mathbb ZF(X\times\Delta^\bullet\wedge (\Gm,1),Y\wedge (\Gm,1))
\end{equation}
is an quasi-isomorphism.
\end{theorem}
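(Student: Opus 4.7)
The plan is to follow the proof of the cancellation theorem from \cite{AGP-FrCanc} essentially word-for-word, replacing each appeal to a result of \cite{GP-HIVth} that assumes the base field is infinite by the corresponding statement proved earlier in this paper over arbitrary perfect fields. The key observation, already flagged in the paragraph preceding the theorem, is that the constructions inside \cite{AGP-FrCanc} — the explicit candidate inverse correspondence to smashing with $(\Gm,1)$ and the explicit chain homotopies witnessing the cancellation — never use the infiniteness of $k$ directly; the hypothesis enters only through citations to the injectivity, excision, and strictly homotopy invariance results of \cite{GP-HIVth}.

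The necessary extended inputs are now all in hand: the injectivity and excision theorems (Theorems \ref{th:InjLoc}--\ref{th:etex} together with Theorem \ref{th:ZarrelexA1}), their consequences for homotopy invariant stable linear framed presheaves (Corollary \ref{cor:HIFrProp} and Corollary \ref{cor:Cor216GP14}), and the strictly homotopy invariance theorem itself (Theorem \ref{th:StrHimInv}). Each of these was established over arbitrary perfect $k$ via the finite descent machinery of Section \ref{sect:Finite Descent}, which reduces the finite-field case to the infinite-field case by transferring through two towers of coprime-degree separable extensions using the correspondences $T_{K/k}$ and $\Lambda_l$ of Lemmas \ref{sbl:Lambda}--\ref{sblm:prT}.

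Concretely, I would audit the proof in \cite{AGP-FrCanc}, produce the list of citations it makes to \cite{GP-HIVth}, verify that each cited statement is a special case of one of the theorems or corollaries listed above, and then simply substitute. No new geometric construction is required, and no use of finite descent is made in the cancellation argument itself — the finite descent is a \emph{preparatory} tool that has already done its work. The main obstacle is therefore not in the cancellation proof, but in the earlier extension of the injectivity and excision theorems to finite base fields, which is what Section \ref{sect:Finite Descent} accomplishes. One could in principle attempt to apply Corollary \ref{cor:FinDesInf} directly to the quasi-isomorphism assertion \eqref{eq:Canc}, but this route is awkward: being a quasi-isomorphism of complexes of framed correspondence groups does not fit naturally into the weak-lifting framework for a \emph{finite} precategory diagram, so the detour through the reproved arguments of \cite{AGP-FrCanc} is preferable and gives the cleanest proof.
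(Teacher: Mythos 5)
Your proposal matches the paper's own proof: the paper establishes Theorem \ref{th:canc} precisely by observing that the argument of \cite[Theorem C]{AGP-FrCanc} uses the infiniteness of $k$ only through citations to \cite{GP-HIVth}, and then substituting Theorems \ref{th:StrHimInv} and \ref{th:InjLoc}--\ref{th:etex} (proved over perfect fields via finite descent) for those citations, exactly as you describe. The only minor point of divergence is your dismissal of the direct descent route: the paper's later remark shows it does work if one applies the descent correspondence $L$ degreewise to individual cycles and bounding elements rather than to the quasi-isomorphism statement as a whole, but this is offered there only as an alternative, not as the official proof.
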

\begin{proof}
The claim follows 
from theorems \ref{th:StrHimInv} and \ref{th:InjLoc}-\ref{th:etex}
by the arguments as in \cite[theorem C]{AGP-FrCanc}.
\end{proof}

\subsection{Remarks on more precise and  shorter arguments.}

\begin{remark}
The infiniteness assumption on the base field in \cite{GP-HIVth}
actually is needed only in theorems 2.11, 2.13, 2.14, and corollary 2.16.
So we could not to reprove theorems \ref{th:InjA1}, \ref{th:ZarexA1} that are \cite[th 2.9, th 2.10]{GP-HIVth}.
\end{remark}

\begin{remark}
In the above argument we have extended \cite[th 2.9, th. 2.10, th 2.11]{GP-HIVth} to the finite field case applying corollary \ref{cor:FinDesInf}, and 
we have gave the independent argument for \cite[corollary 5]{GP-HIVth} for the case of such fields.
In the same time let us note that if we reformulate \cite[corollary 5]{GP-HIVth} in therms of the category of correspondences of pairs, in a similar form to the theorem \ref{th:ZarrelexA1} or \cite[theorem 2.12]{GP-HIVth}, then \cite[corollary 5]{GP-HIVth} could deduced in the case of finite fields form the infinite field case applying corollary \ref{cor:FinDesInf} as well.
\end{remark}

\begin{remark}
Using corollary \ref{cor:FinDesN} instead of
corollary \ref{cor:FinDesInf} theorems \ref{th:InjLoc} and \ref{th:etex} could be deduced from the inner results of \cite{GP-HIVth} . 
Namely we mean some inner statements in the proof of theorems \ref{th:InjLoc} and \ref{th:etex} in \cite{GP-HIVth}.
The corollary \ref{cor:FinDesInf} and a short proof above was explained to the authors by I.~Panin and A.~Tsybyshev.

Let us explain this argument.
The assumption in the proofs is needed to satisfy some conditions of generic position.
Namely,
\begin{itemize}
\item[(1)] constructing a relative curve with a ``good'' compactification in both theorems it is needed to choose some projection in affine space such that the restriction to some smooth subscheme of codimension one is \'{e}tale;
\item[(2)] constructing of the morphism from the \'{e}tale neighbourhood $\mathcal V$ of the support of the framed correspondence to the target $Y$ of the framed correspondence, it is assumed to choose a generic projection again;
\item[(3)] in the injective \'{e}tale excision, when choosing a section of a line bundle on a projective curve that does not satisfy some closed property.
\end{itemize}
All these constructions require to find a $k$-rational point in a non-empty open subscheme $U\subset \PP^N_k$ for some $N$.
By lemma \ref{lm:bigK-Aff} such point exists for all enough big field extensions $K/k$, and so theorem \ref{th:etex} holds
for all fields $K$, $k\subset K$, such that such that $\deg K/k>L$ for some $L \gg 0$.
Now the case of a finite base field $k$ follows by lemma \ref{cor:FinDesN}. 
\end{remark}

\begin{remark}
The finite descent argument allow us to deduce theorem \ref{th:canc} over finite base fields from the infinite base field case directly without references to the inner scheme of the proof in \cite{AGP-FrCanc}.
We show the surjectivity of the homomorphism \ref{eq:Canc}, 
the injectivity is similar and simpler.

Let $c\in \mathbb ZF(X\times\Delta^i\wedge (\Gm,1),Y\wedge (\Gm,1))$ be a cycle of the complex $\mathbb ZF(X\times\Delta^\bullet\wedge (\Gm,1),Y\wedge (\Gm,1))$. We need to show that there is a cycle $c^\prime\in \mathbb ZF(X\times\Delta^i,Y)$ of the complex $\mathbb ZF(X\times\Delta^\bullet,Y)$,
and an element $b\in \mathbb ZF(X\times\Delta^{i-1}\wedge (\Gm,1),Y\wedge (\Gm,1))$ 
such that $c = \theta(c^\prime)+d_i(b)$.
By assumption such elements $c^\prime_K$ and $b_K$ exist over an infinite field extension $K/k$.
As in Corollary \ref{cor:FinDesInf} this implies that there exist $c^\prime_{K_1}$, $b_{K_1}$, $c^\prime_{K_2}$ and $b_{K_2}$ for a pair of co-prime finite field extensions $K_1/k$, and $K_2/k$.
Now we see that elements $c^\prime  = pr \circ (c_{K_1}\amalg c_{K_2}) \circ (L\boxtimes id_{X\times\Delta^{i}})$ and $b= pr^\prime \circ (b_{K_1}\amalg b_{K_2}) \circ (L\boxtimes id_{X\times\Delta^{i-1}})$ satisfy the required conditions, 
where $pr\colon Y\times S\to Y$, $pr^\prime \colon Y\wedge (\Gm,1)\times S\to Y\wedge (\Gm,1)$, $S=\Spec{K_1}\amalg \Spec K_2$, 
and $L\in \mathbb ZF_1(\Spec k,\Spec{K_1}\amalg \Spec K_2)$ is defined as in \eqref{eq:invlift}.
\end{remark}
\vspace{5pt}

\subsection{Corollary on zeroth motivic homotopy groups} 

By the above all results of \cite{GP_MFrAlgVar} hold over an arbitrary perfect field. 
A particular consequence of 
\cite[theorem 11.7]{GP_MFrAlgVar} 
is the following result on the zeroth motivic homotopy groups
%
\begin{theorem}\label{th:piSH=H0Fr}
Let $k$ be a perfect field, 
then
$$[pt_+,\G^{\wedge n}_m]_{\SHd(k)}\simeq H^0(\mathbb ZF(\Delta_k^\bullet, \G^{\wedge n}_m)).$$
\end{theorem}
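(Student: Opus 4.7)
The plan is to invoke the fibrant-resolution theorem of Garkusha-Panin, specifically \cite[Theorem 11.7]{GP_MFrAlgVar}, which for an infinite perfect field $k$ identifies $[\pt_+, \G_m^{\wedge n}]_{\SHd(k)}$ with $H^0(\mathbb ZF(\Delta_k^\bullet, \G_m^{\wedge n}))$. The work of \cite{GP_MFrAlgVar} proceeds by building, out of the framed motive construction, an explicit motivically fibrant $(S^1, \G_m)$-bispectrum equivalent to the motivic suspension spectrum of $\pt_+$, and reading off $\pi_{-n,-n}(\SSp)(\pt_k)$ as $H^0$ of the Suslin complex of the stable linear framed presheaf evaluated at $\G_m^{\wedge n}$. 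The only two places in that argument where the infiniteness of the base field enters are the appeals to the strictly homotopy invariance theorem for linear framed $\sigma$-stable presheaves and to the linear cancellation theorem.

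First I would verify that both of these foundational inputs are now available in the generality of a perfect field, not merely an infinite one. This is exactly the content of Theorem \ref{th:StrHimInv} (strictly homotopy invariance) and Theorem \ref{th:canc} (cancellation), each of which has already been established earlier in this paper. Theorem \ref{th:StrHimInv} was obtained from the injectivity and excision properties of Corollary \ref{cor:HIFrProp} together with the relative $\A^1$-excision of Corollary \ref{cor:Cor216GP14} by the sequence of reductions of \cite[\S 16]{GP-HIVth}, while those inputs were themselves extended to finite perfect fields using the finite descent corollary \ref{cor:FinDesInf}; Theorem \ref{th:canc} was then deduced from Theorem \ref{th:StrHimInv} and Theorems \ref{th:InjLoc}-\ref{th:etex} by the unchanged argument of \cite[Theorem C]{AGP-FrCanc}.

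Once these two pillars are in place, I would simply run the proof of \cite[Theorem 11.7]{GP_MFrAlgVar} word-for-word over the perfect base field $k$: the framed motive $M_{fr}(\pt)$ receives a levelwise motivic equivalence to the corresponding slice of a fibrant model of the sphere spectrum, the cancellation theorem permits us to loop against $\G_m$ at the level of framed correspondences, and strictly homotopy invariance ensures that taking $H^0$ of the Suslin complex commutes with the relevant Nisnevich sheafification. Setting $X = \pt_k$ and taking the $(n,n)$-homotopy group yields the stated isomorphism. The genuine obstacle—removing the infiniteness assumption from the two inputs—has already been overcome in the preceding sections, so at this final stage no new geometric input is required and the theorem drops out as a formal corollary.
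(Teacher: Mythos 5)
Your proposal matches the paper's own argument: the paper derives this theorem as a direct consequence of \cite[Theorem 11.7]{GP_MFrAlgVar}, having noted that the only places the infiniteness hypothesis enters that theory are the strictly homotopy invariance and cancellation theorems, which are exactly Theorems \ref{th:StrHimInv} and \ref{th:canc} established earlier via the finite descent results. So your proof is correct and takes essentially the same route as the paper.
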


\section{Proof of the isomorphism $\mathrm{K}^\mathrm{MW}_n\simeq H^0(\mathbb ZF(\Delta^\bullet, \G^{\wedge n}_m))$.}
\label{sect:KMWHoFr}
In this section we recall Neshitov's proof of the isomorphism \eqref{eq:KMWHoZF} in the case characteristic zero \cite{Nesh-FrKMW},
and extend the arguments to obtain a proof for perfect fields of characteristic different from two.
Actually, here we repeat the same arguments as in \cite{Nesh-FrKMW} replacing some references to the results that require alternative proof.
Namely the reference to the proof of Steinberg relation given in \cite[subsection 8.3]{Nesh-FrKMW} is replaced by the reference to \cite{PHandIK-Steinberg}, \cite{Powell-Steinberg} or \cite{DKMWhom}; and 
the reference to the moving lemma \cite[lemma 4.10]{Nesh-FrKMW} is replaced by the lemma \ref{lm:iOScormove} proven in the next section of the present article.

It is written at the beginning of \cite{Nesh-FrKMW} that throughout the text the base field is of characteristic $0$. In the same time the assumption is used only in few places and many of the arguments works in any characteristic or in any odd characteristic.
Let us cite some of original lemmas from \cite{Nesh-FrKMW} indicating what are essential assumptions on the base field in the proofs for each statement.

\begin{theorem}
For a perfect field $k$ of characteristic different form $2$ there is a graded ring isomorphism 
\begin{equation}\label{eq:KMWHoZF}
\mathrm{K}^\mathrm{MW}_{\geq 0}(k)\simeq H^0(\mathbb ZF(\Delta^\bullet_k, \G^{\wedge *}_m))
\end{equation}
that takes the symbol $[a_1,\dots a_n]\in \mathrm{K}^\mathrm{MW}_n(k)$, $a_i\in k^\times$, to the class of the correspondences defined by the regular map $(a_1,\dots a_n)\colon \pt_k\to \G_m^{\wedge n}$.
\end{theorem}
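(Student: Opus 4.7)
The plan is to follow Neshitov's strategy in \cite{Nesh-FrKMW} essentially verbatim, substituting an alternative argument at the two places where his proof uses restrictive hypotheses on the base field. First, I would construct the homomorphism
$$\Psi_*\colon \mathrm{K}^\mathrm{MW}_*(k)\to H^0(\mathbb ZF(\Delta^\bullet_k,\G_m^{\wedge *}))$$
by sending a Milnor--Witt symbol $[a_1,\dots,a_n]$ with $a_i\in k^\times$ to the class of the explicit framed correspondence $(a_1,\dots,a_n)\colon \pt_k\to \G_m^{\wedge n}$, exactly as in \cite[Section 8]{Nesh-FrKMW}. Well-definedness amounts to checking the defining relations of $\mathrm{K}^\mathrm{MW}$: $\eta$-linearity, graded commutativity, the relation $\langle a\rangle\cdot\langle b\rangle=\langle ab\rangle$ in $\overline{\mathbb ZF}_1(\pt_k,\pt_k)$, and the Steinberg relation $[a,1-a]=0$. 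All but the Steinberg relation go through in Neshitov's argument for any perfect $k$ with $\chark k\neq 2$.

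For the Steinberg relation, which in \cite[Lemma 8.9]{Nesh-FrKMW} requires $\chark k\neq 3$ via a cubic curve construction, I would instead invoke the purely geometric proof of Hu--Kriz \cite{PHandIK-Steinberg} (or Powell \cite{Powell-Steinberg}) of the Steinberg relation in $\pi_{2,2}(\mathbb S)$, combined with the identification of this stable motivic homotopy group with Suslin complex cohomology provided by Theorem \ref{th:piSH=H0Fr}. This supplies the Steinberg relation over any perfect base field and removes the characteristic three restriction, so $\Psi_*$ is well defined.

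Surjectivity of $\Psi_*$ reduces, by Neshitov's argument, to representing any class in $H^0(\mathbb ZF(\Delta^\bullet_k,\G_m^{\wedge n}))$ by an explicit framed correspondence whose support is a disjoint union of $k$-rational points of $\A^n$. This uses the two moving lemmas \cite[Lemma 4.11, Lemma 5.4]{Nesh-FrKMW}. The first of these, which in \cite{Nesh-FrKMW} depends on characteristic zero (generic smoothness) and on an infinite base field (generic position), I would replace by Lemma \ref{lm:iOScormove} of Section \ref{sect:MoovLm}, proved over any perfect field by the continuous $(i)$-simpleness strategy outlined in the introduction: first put the framing functions into generic position via Lemma \ref{prelm:GSO}, then successively convert them to a $(1)$-simple form. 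The second moving lemma is handled by Lemma \ref{lm:standFrptpt}, or alternatively by finite descent (Corollary \ref{cor:FinDesInf}), to cover the arbitrary perfect field case.

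The main obstacle, and the step that forces $\chark k\neq 2$, is the construction of a left inverse to $\Psi_*$, providing injectivity. Following Neshitov, I would assign to an explicit framed correspondence $(\mathcal V,Z,\phi_1,\dots,\phi_n,g)\in \mathbb ZF_n(\pt_k,\G_m^{\wedge n})$ an element of $\mathrm{K}^\mathrm{MW}_n(k)$ via a Chow--Witt pushforward along the finite morphism $Z\to \pt_k$, using the complexes $C(X,\G^{\wedge n},L)_Z$ of \cite{Fasel-GroupsdeCW}. The difficulty is showing that this construction is independent of the presentation and descends to $H^0$ of the Suslin complex, and that it inverts $\Psi_*$ on generators; this hinges on Fasel's pushforward functoriality for Chow--Witt complexes, which is precisely the piece available only for $\chark k\neq 2$. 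Granting this, ring-multiplicativity of $\Psi_*$ is automatic: external product of framed correspondences realises the product of Milnor--Witt symbols, so $\Psi_*$ is a graded ring isomorphism as stated.
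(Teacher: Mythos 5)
Your proposal follows essentially the same route as the paper: $\Psi_*$ is built by Neshitov's construction with the Steinberg relation imported from Hu--Kriz/Powell via Theorem \ref{th:piSH=H0Fr}, surjectivity comes from the new moving lemmas (Lemma \ref{lm:iOScormove} and Lemma \ref{lm:standFrptpt}, with finite descent), and injectivity from Neshitov's Chow--Witt left inverse $\Phi$, which is exactly where $\chark k\neq 2$ enters. The only slight slip is that for $n\geq 1$ the moving lemma only produces support with separable (not $k$-rational) residue fields, so surjectivity additionally needs the compatibility of $\Psi_*$ with traces/transfers (Lemma \ref{lm:Tr}), as in the paper's Lemma \ref{lm:sur}; since you defer to Neshitov's argument this is an imprecision rather than a gap.
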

\begin{proof}
The lemmas \ref{lm:Phi}, \ref{lm:Psi} give us the injective ring homomorphism $\mathrm{K}^\mathrm{MW}_n(k)\to H^0(\mathbb ZF(\Delta^\bullet_k, \G^{\wedge n}_m))$. The homomorphism is surjective by lemma \ref{lm:sur}.
\end{proof}

\begin{lemma}[section 6.2, lemma 9.1, section 7 in \cite{Nesh-FrKMW}]\label{lm:Phi}
For a perfect field $k$, $\chark k\neq 2$, 
there are homomorphisms 
$$\Phi_{n,k}\colon H^0(\mathbb ZF(\Delta^\bullet,\G_m^{\wedge n}))\to \mathrm{K}^\mathrm{MW}_n(k), n\geq 0,$$ 
that takes the class of the correspondences defined by the map $(a_1,\dots a_n)\colon \pt_k\to \G_m^{\wedge n}$ to the symbol $[a_1,\dots a_n]$.
\end{lemma}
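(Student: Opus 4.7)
The plan is to construct $\Phi_{n,k}$ by mimicking the original definition of Neshitov via the Rost-Schmid/Chow-Witt machinery of Fasel, using that the explicit data of a framed correspondence $c = (\mathcal{V}, Z, \phi_1,\dots,\phi_n, g) \in \mathrm{Fr}_n(\pt_k, \G_m^{\wedge n})$ produces naturally a class in Milnor-Witt cohomology with support on $Z$.

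More concretely, the first step is, given an explicit $c$ as above with $Z \subset \A^n_k$ a finite scheme, to view $(\phi_1,\dots,\phi_n)$ as a section of the trivialised rank-$n$ bundle on the étale neighbourhood $\mathcal V$ of $Z$ vanishing exactly on $Z$. This defines a Koszul class $\la \phi\ra_Z \in C^n(\mathcal V, \G^{\wedge n}, \struct)_Z$ in the Rost-Schmid complex with support on $Z$; because $Z$ is zero-dimensional this class is a cycle and hence gives an element of Fasel's $\tilde{CH}^n(\mathcal V)_Z$. Pairing with $g^*$ of the canonical symbol class $[t_1,\dots,t_n] \in \tilde{CH}^n(\G_m^{\wedge n})$ and pushing forward along the (finite) structure map $Z \to \pt_k$ (using the trace in Milnor-Witt K-theory along finite fields extensions, which exists over perfect fields of characteristic $\neq 2$), one obtains
\[
\Phi_{n,k}(c) \; := \; (pr_Z)_*\bigl(\la \phi\ra_Z \cdot g^*[t_1,\dots,t_n]\bigr) \;\in\; \mathrm{K}^\mathrm{MW}_n(k).
\]

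Next I would verify that this definition descends through the successive quotients: first, independence of the choice of étale neighbourhood $\mathcal V$ (from excision of the Rost-Schmid complex); second, bilinearity and passage to the linear category $\mathbb ZF_n$; third, compatibility with the suspension $\sigma$ (so that the collection factors through the stable group $\mathbb ZF(\pt_k, \G_m^{\wedge n})$), which reduces to the computation that suspending by the coordinate function $x_{n+1}$ multiplies the Koszul class by the unit $\la 1\ra$; and fourth, invariance under $\A^1$-homotopy (i.e.\ vanishing of $\Phi$ on boundaries of the Suslin complex $\mathbb ZF(\Delta^\bullet_k,\G_m^{\wedge n})$), which follows from homotopy invariance of Chow-Witt cohomology and Fasel's construction of the boundary maps in the Rost-Schmid complex. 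At this point $\Phi_{n,k}$ is defined on $H^0$. Finally, for $c$ defined by a $k$-rational map $(a_1,\dots,a_n)\colon \pt_k \to \G_m^{\wedge n}$ (with $Z = \{0\}$, $\phi_i = x_i$, $g^*(t_i) = a_i$), a direct computation of the Koszul/symbol product gives $[a_1,\dots,a_n] \in \mathrm{K}^\mathrm{MW}_n(k)$ as claimed.

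The main obstacle is the pushforward step. Over a perfect field of characteristic two the Rost-Schmid pushforward for $C(X, \G^{\wedge n}, L)_Z$ is subtle (cf.\ the remarks after the discussion of Chow-Witt cohomology), which is precisely why the odd-characteristic assumption appears. A secondary technical point is verifying that the class $\la \phi\ra_Z$ in the Rost-Schmid complex is genuinely intrinsic to the framing modulo the operations generating $\mathbb ZF_n$ — in particular invariance under the $\sigma$-suspension and under equivalence of étale neighbourhoods — which in \cite{Nesh-FrKMW} occupies the bulk of Sections 6.2 and 7 and which we import word-by-word, as these arguments do not use $\chark k = 0$ beyond what is already subsumed by the use of Chow-Witt pushforwards.
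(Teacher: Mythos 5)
Your proposal follows essentially the same route as the paper: the paper gives no independent proof of this lemma but imports Neshitov's construction of $\Phi_{n,k}$ (sections 6.2, 7 and lemma 9.1 of \cite{Nesh-FrKMW}) wholesale, observing exactly as you do that the only real characteristic restriction comes from Fasel's Chow--Witt/Rost--Schmid pushforwards for the complexes $C(X,\G^{\wedge n},L)_Z$, which forces $\chark k\neq 2$. Your sketch of the Koszul class with support, the pairing with the symbol class, the pushforward to the point, and the well-definedness checks is a faithful summary of that cited construction, so it matches the paper's intent; the only slight imprecision is attributing the $\chark k\neq 2$ hypothesis to the Milnor--Witt transfers themselves (Morel's transfers exist in any characteristic) rather than solely to the Chow--Witt pushforwards, but this does not affect the argument.
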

%

\begin{lemma}\label{lm:Psi}
For a field $k$, $\chark k\neq 2$, there is a graded ring homomorphism 
$$\Psi_{*,k}\colon \mathrm{K}^\mathrm{MW}_{\geq 0}(k)\to H^0(\mathbb ZF(\Delta^\bullet_k,\G_m^{\wedge *}))$$
that 
takes the symbol $[a_1,\dots a_n]\in \mathrm{K}^\mathrm{MW}_n(k)$, $a_i\in k^\times$, to the class of the correspondences defined by the regular map $(a_1,\dots a_n)\colon \pt_k\to \G_m^{\wedge n}$.
\end{lemma}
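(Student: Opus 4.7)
The plan is to follow Neshitov's construction of $\Psi_{*,k}$ in \cite{Nesh-FrKMW} essentially verbatim, replacing only the proof of the Steinberg relation. Milnor--Witt K-theory $\mathrm{K}^\mathrm{MW}_*(k)$ admits Morel's presentation with generators $[a]$ in degree $1$ for $a\in k^\times$ and $\eta$ in degree $-1$, modulo the twisted multiplicativity relation $[ab]=[a]+[b]+\eta[a][b]$, the Steinberg relation $[a][1-a]=0$, the centrality $\eta[a]=[a]\eta$, and $\eta h=0$ with $h=2+\eta[-1]$. Following \cite{Nesh-FrKMW}, I would set $\Psi([a])\in H^0(\mathbb ZF(\Delta^\bullet_k,\G_m))$ to be the class of the explicit framed correspondence associated to the regular map $a\colon \pt_k\to \G_m$, and define $\Psi(\eta)$ via the explicit framed representative of the motivic Hopf map used in \cite{Nesh-FrKMW}. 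The external product of framed correspondences furnishes the graded ring structure on $\bigoplus_n H^0(\mathbb ZF(\Delta^\bullet_k,\G_m^{\wedge n}))$, and extending multiplicatively forces $\Psi$ to send $[a_1,\dots,a_n]$ to the class of $(a_1,\dots,a_n)\colon \pt_k\to \G_m^{\wedge n}$; it then suffices to check that the four defining relations hold in the target.

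Of those four relations, three---twisted multiplicativity, centrality of $\eta$, and $\eta h=0$---are verified in \cite{Nesh-FrKMW} by constructing explicit $\A^1$-homotopies of framed correspondences. These homotopies are built from polynomial identities with integer coefficients and use nothing of the base field beyond $\chark k\neq 2$ (which is genuinely needed, e.g., for the appearance of $h=\langle 1\rangle+\langle -1\rangle$), so I would reproduce them verbatim over an arbitrary field of odd characteristic.

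The hard part, and the only place where Neshitov's proof genuinely requires additional hypotheses, is the Steinberg relation $[a][1-a]=0$. The argument in \cite[Lemma 8.9]{Nesh-FrKMW} proceeds via a trace along a degree-$3$ extension furnished by a certain cubic curve, and as the excerpt notes, the derivative of the defining cubic enters the framing data and obstructs the argument when $\chark k=3$. To bypass this I would appeal to one of the existing geometric proofs of the Steinberg relation in the stable motivic homotopy group $\pi^{2,2}_s(k)$---Hu--Kriz \cite{PHandIK-Steinberg}, Powell \cite{Powell-Steinberg}, or the construction in \cite{DKMWhom}---each of which is valid over an arbitrary perfect field of odd characteristic, and transport the vanishing to $H^0(\mathbb ZF(\Delta^\bullet_k,\G_m^{\wedge 2}))$ via the identification of Theorem \ref{th:piSH=H0Fr}. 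Once Steinberg is in hand, the previous paragraph completes the verification and yields the claimed graded ring homomorphism $\Psi_{*,k}$ with the stated values on symbols; this is the main and essentially only substantive obstacle in passing from characteristic zero to odd positive characteristic, and it is resolved by replacing a single citation.
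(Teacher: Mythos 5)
Your central idea---keep Neshitov's construction and replace only the proof of the Steinberg relation by the reference to \cite{PHandIK-Steinberg}, \cite{Powell-Steinberg} or \cite{DKMWhom}, transported into $H^0(\mathbb ZF(\Delta^\bullet_k,\G_m^{\wedge 2}))$ through Theorem \ref{th:piSH=H0Fr}---is exactly the paper's. The gap is in how you get the homomorphism out of relation-checking. The presentation you quote (generators $[a]$ and $\eta$ with twisted multiplicativity, Steinberg, centrality, $\eta h=0$) presents the full $\mathbb Z$-graded ring $\mathrm{K}^\mathrm{MW}_*(k)$, in which $\eta$ sits in degree $-1$; but the target $\bigoplus_{n\geq 0}H^0(\mathbb ZF(\Delta^\bullet_k,\G_m^{\wedge n}))$ has no negative degrees, so ``define $\Psi(\eta)$ and check the four relations'' does not by itself produce a well-defined map out of $\mathrm{K}^\mathrm{MW}_{\geq 0}(k)$: the non-negative part of a quotient of a $\mathbb Z$-graded algebra is not automatically presented by the same generators and relations restricted to non-negative degrees. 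You must either enlarge the target to something $\mathbb Z$-graded where $\eta$ genuinely lives (e.g.\ $\bigoplus_{n\in\mathbb Z}\pi_{-n,-n}(\mathbb S)(\pt_k)$, and then restrict using Theorem \ref{th:piSH=H0Fr}), or invoke Morel's presentation of the non-negative part, $\mathrm{K}^\mathrm{MW}_{\geq 0}(k)\cong T_{GW(k)}(\mathrm{K}^\mathrm{MW}_1(k))$ modulo the Steinberg relation, which is precisely the route the paper takes via \cite[remark 3.2]{M-A1Top}.

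Relatedly, you dispose of the non-Steinberg relations as ``explicit homotopies with integer coefficients valid in any odd characteristic,'' but the paper (following \cite[subsection 8.3]{Nesh-FrKMW}) does something more structured: it first constructs $\Psi_0\colon GW(k)\to H^0(\mathbb ZF(\Delta^\bullet_k,\pt_k))$ by Neshitov's section 7 arguments and proves it is an \emph{isomorphism}---surjectivity via the moving Lemma \ref{lm:standFrptpt} (one of the statements that has to be re-proved outside characteristic $0$), injectivity via $\Phi_0$ from Lemma \ref{lm:Phi}---and then uses this isomorphism together with the Steinberg relation to obtain the remaining relations needed for $\Psi_1$ on $\mathrm{K}^\mathrm{MW}_1(k)$ and hence for $\Psi_*$ on the tensor algebra. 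So the degree-$0$ and degree-$1$ steps are not mere citations of characteristic-independent homotopies; your sketch leaves this structure implicit. Once you supply Morel's presentation of $\mathrm{K}^\mathrm{MW}_{\geq 0}$ (or a $\mathbb Z$-graded target to restrict from) and the $\Psi_0$/$\Psi_1$ step, your argument coincides with the paper's proof.
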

The arguments of \cite[section 7, section 8]{Nesh-FrKMW} proves the claim for a field $k$, $\chark k \neq 2$, $\chark k\neq 3$.
The assumption $\chark k\neq 2$ is needed for the proof of the relations in $GW(k)$ \cite[section 7]{Nesh-FrKMW}, and the assumption $\chark k\neq 3$ for the Steinberg relation \cite[subsection 8.2]{Nesh-FrKMW}. 
Nevertheless theorem \ref{th:piSH=H0Fr} provides that the original work \cite{PHandIK-Steinberg}, where the Steinberg relation is proven in $\pi^{2,2}(S)$ over an arbitrary base scheme $S$, implies the relation in $H^0(\mathbb ZF(\Delta^\bullet_k,\G_m^{\wedge 2}))$ for a prefect $k$. Thus repeating the arguments replacing the proof of Steinberg relation by the reference to \cite{PHandIK-Steinberg} we get the construction of $\Psi_*$ over a perfect fields of odd characteristic.
Let us note that as shown 
in the preprint \cite{DKMWhom}
the required homomorphism of rings $\Psi_*$ actually exists over an arbitrary base scheme. 
\begin{proof}[Proof of lemma \ref{lm:Psi}]

To construct the homomorphism it is needed to prove the relations of the Milnor-Witt K-theory ring $\mathrm{K}^\mathrm{MW}_*(k)$
in $H^0(\mathbb ZF(\Delta^\bullet,\G_m^{\wedge *}))$.
Due to theorem \ref{th:piSH=H0Fr} this is equivalent to prove the relations in the ring $\pi^{*,*}_s(\pt_k)$.

Firstly, the arguments of \cite[lemma 7.6]{Nesh-FrKMW} provides the homomorphism $\Psi_0\colon GW(k)\to H^0(\mathbb ZF(\Delta^\bullet,\pt_k)$. The moving lemma \ref{lm:standFrptpt} proven in the next section yields immediate that $\Psi_0$ is surjective. Then by lemma \ref{lm:Phi} it follows that $\Psi_0$ is an isomorphism.

As shown in \cite[subsection 8.3]{Nesh-FrKMW}
the isomorphism $\Psi_0$ and the Steinberg relation 
implies the rest relations of $\mathrm{K}^\mathrm{MW}$.
In detail, \cite[lemmas 8.5, corollary 8.14, lemma 8.15]{Nesh-FrKMW} provides  the homomorphism $\Psi_1\colon\mathrm{K}^\mathrm{MW}_1(k)\to H^0(\mathbb ZF(\Delta^\bullet,\G_m^{\wedge 1}))$.
Hence since by \cite[remark 3.2]{M-A1Top} the Steinberg relation defines a factor algebra of the tensor algebra $T_{GW(k)}(\mathrm{K}^\mathrm{MW}_1(k))$ 
we get the homomorphism $\Psi_*$.
\end{proof}
%
\begin{lemma}[proposition 9.6 \cite{Nesh-FrKMW} for $\chark k=0$]\label{lm:sur}
The homomorphism $\Psi_{*,k}$ is surjective for any perfect field $k$, $\chark k\neq 2$. 
\end{lemma}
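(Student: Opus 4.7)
The plan is to follow Neshitov's original surjectivity argument \cite[Proposition 9.6]{Nesh-FrKMW} with two substitutions: the moving Lemma 4.11 of \cite{Nesh-FrKMW} is replaced by Lemma \ref{lm:iOScormove} (with the generic position Lemma \ref{prelm:GSO}), and the moving Lemma 5.4 of \cite{Nesh-FrKMW} is replaced by Lemma \ref{lm:standFrptpt}. Throughout, the identification $H^0(\mathbb ZF(\Delta^\bullet_k, \G_m^{\wedge n})) \simeq [\pt_+, \G_m^{\wedge n}]_{\SHd(k)}$ provided by Theorem \ref{th:piSH=H0Fr} is used to transport elements between the correspondence description and the motivic-homotopy description.

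Take a class in $H^0(\mathbb ZF(\Delta^\bullet_k, \G_m^{\wedge n}))$ and represent it by an explicit framed correspondence $c = (\mathcal V, Z, \phi_1,\dots, \phi_N, g) \in \mathbb ZF_N(\pt_k, \G_m^{\wedge n})$ for some $N \geq n$. The first step is to bring $c$ into a normal form. Using Lemma \ref{prelm:GSO} I move the framing functions $\phi_i$ into generic position without disturbing $Z$ or their 1-jets along $Z$; then Lemma \ref{lm:iOScormove} successively modifies $\phi_N, \phi_{N-1}, \dots, \phi_1$ so that the resulting correspondence is $(1)$-\emph{simple} in the sense of Definition \ref{def:iSmooth}. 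In particular the support decomposes as a finite disjoint union of $\Spec K_i$ for separable finite extensions $K_i/k$, and each summand is the direct image under the trace of a framed correspondence $\pt_{K_i} \to \G_m^{\wedge n}$ whose defining map $g|_{\Spec K_i}$ is simply a tuple $(a_1^{(i)}, \dots, a_n^{(i)}) \in (K_i^\times)^n$.

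Next, for each summand, I use Lemma \ref{lm:standFrptpt} (with its modification for arbitrary perfect fields, or, alternatively, the finite descent Corollary \ref{cor:FinDesInf}) to reduce the framing over $\Spec K_i$ to a $\Z$-linear combination of rational-point framings, i.e.\ to a sum of correspondences of the shape $\langle \lambda \rangle \cdot (a_1^{(i)}, \dots, a_n^{(i)})$ with $\lambda \in K_i^\times$. After applying the trace $\tr_{K_i/k}$ termwise, I obtain an expression for $[c]$ as a $\mathrm{K}_0^{\mathrm{MW}}(k) = GW(k)$-linear combination of transfers of symbols, and Lemma \ref{lm:Psi} together with the standard transfer formula $\tr_{K_i/k}([a_1^{(i)}, \dots, a_n^{(i)}]) \in \mathrm{K}_n^{\mathrm{MW}}(k)$ exhibits $[c]$ as $\Psi_{n,k}(\xi)$ for some $\xi \in \mathrm{K}^{\mathrm{MW}}_n(k)$.

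I expect the main obstacle to lie squarely in the first step: in odd positive characteristic one cannot use generic smoothness to achieve the separability of residue fields at the support, which is exactly the role played by the new Lemma \ref{lm:iOScormove}. Once one grants that result, the remaining reduction from separable residue field framings to rational-point framings, and the translation to symbols in Milnor--Witt $K$-theory, is a formal consequence of the trace formalism and Lemma \ref{lm:standFrptpt}, both of which have already been prepared in the present article.
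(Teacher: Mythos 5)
Your proposal is correct and takes essentially the same route as the paper: reduce to a simple correspondence with smooth (separable) support via Lemmas \ref{prelm:GSO} and \ref{lm:iOScormove} (this is exactly the content of Lemma \ref{lm:planRC:simplFrcor}), normalize the point-supported framings via Lemma \ref{lm:standFrptpt}, and conclude by Neshitov's Proposition 9.6 argument through the transfers. The only thing to make explicit is that the ``trace formalism'' you invoke in the final step is precisely Lemma \ref{lm:Tr}, i.e.\ the compatibility $tr^{K_i}_k\circ\Psi_{n,K_i}=\Psi_{n,k}\circ Tr^{K_i}_k$, which is what the paper cites at this point.
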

\begin{proof}
As noted above the case of $\Psi_0$ follows from the moving lemma proven in the next section \ref{lm:standFrptpt}.
The general case follows 
form lemmas \ref{lm:Tr} and lemma \ref{lm:planRC:simplFrcor}
in a similar way to \cite[proposition 9.6]{Nesh-FrKMW}. 
\end{proof}

%

\begin{lemma}[see lemma 9.5 and subsection 3.1 in \cite{Nesh-FrKMW}]\label{lm:Tr}
Let $k$ be a field, $\chark k\neq 2$.
Let $L/k$ be a finite field extension of a field $k$, $\chark k\neq 2$. 
There are a transfer map $Tr^L_k \colon \mathrm{K}^\mathrm{MW}_n(L) \to 
\mathrm{K}^\mathrm{MW}_n(k)$ given by \cite[definition 4.26]{M-A1Top} and \cite[theorem 4.27]{M-A1Top} and a transfer map 
$tr^L_k\colon H^0(\mathbb ZF(\Delta^\bullet_L ,\G_m^{\wedge n}))\to H^0(\mathbb ZF(\Delta^\bullet_k ,\G_m^{\wedge n}))$
such that
$tr^L_k\circ \Psi_{n,L} = \Psi_{n,k}\circ Tr^L_k$ for all integer $n\geq 0$.
\end{lemma}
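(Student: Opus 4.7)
The plan is to define $tr^L_k$ geometrically via the framed transfer correspondence $T_{L/k}$ from Definition \ref{def:T_K/k}, and then verify the commutativity $tr^L_k\circ\Psi_{n,L}=\Psi_{n,k}\circ Tr^L_k$ on generators. Concretely, for any $c\in\mathbb ZF(\Delta^m_L,\G_m^{\wedge n})$ we identify $\Delta^m_L=\Spec L\times_k\Delta^m_k$ and set
\[
tr^L_k(c)\;=\;c\circ (T_{L/k}\boxtimes\id_{\Delta^m_k})\;\in\;\mathbb ZF(\Delta^m_k,\G_m^{\wedge n}).
\]
Since $T_{L/k}$ is independent of the simplicial coordinate, this construction commutes with the face and degeneracy maps of $\Delta^\bullet$, so it is a map of complexes and descends to $H^0$.

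Next, I would observe that Morel's transfer $Tr^L_k$ is characterized by two properties for any separable extension $L/k$: (i)~the projection formula $Tr^L_k(x\cdot\pi^*(y))=Tr^L_k(x)\cdot y$ for $x\in\mathrm K^{\mathrm{MW}}_*(L)$, $y\in\mathrm K^{\mathrm{MW}}_*(k)$, and (ii)~its value in degree zero, namely the Scharlau transfer $GW(L)\to GW(k)$. The newly-defined $tr^L_k$ satisfies the analogous projection formula: indeed, $\Psi$ is a graded ring homomorphism, the Milnor-Witt ring in positive degrees is generated over $\mathrm K^{\mathrm{MW}}_0(L)$ by symbols, and the composition with $T_{L/k}\boxtimes\id$ interacts with external products of correspondences in the obvious way. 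Granted both the degree-zero case and the projection formula, the identity $tr^L_k\circ\Psi_{n,L}=\Psi_{n,k}\circ Tr^L_k$ follows by reducing any symbol $[a_1,\dots,a_n]\in\mathrm K^{\mathrm{MW}}_n(L)$ to a product of $[a_i]$'s and elements pulled back from $k$ (using $[a]=[a/b]+[b]+\eta[a/b][b]$ to write an $L$-symbol as a combination of $k$-symbols and the ``new'' generators), and then comparing.

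The main obstacle is the degree-zero verification $tr^L_k\circ\Psi_{0,L}=\Psi_{0,k}\circ Tr^L_k$ as maps $GW(L)\to GW(k)$. On the framed side, $\Psi_{0,L}(\langle u\rangle)$ is represented by the explicit correspondence $(\A^1_L,ux)$, and one must show that composing this with $T_{L/k}$ produces the framed correspondence over $k$ that corresponds, under the isomorphism $\Psi_{0,k}:GW(k)\xrightarrow{\sim}H^0(\mathbb ZF(\Delta^\bullet_k,\pt_k))$ of Lemma \ref{lm:Phi}, to the Scharlau transfer $Tr^L_k\langle u\rangle\in GW(k)$. This amounts to a direct geometric computation of the composite $(\A^1_L,ux)\circ T_{L/k}$, which one simplifies by homotoping the minimal polynomial $f$ of the extension to its leading term as in the proof of Lemma \ref{sblm:prT}, and recognizing the resulting framed class as the bilinear form $(x,y)\mapsto\mathrm{Tr}_{L/k}(uxy)$ on $L$ viewed as a $k$-vector space. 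Once this base case is established, the projection formula argument of the preceding paragraph produces the required compatibility in all degrees $n\geq 0$, in parallel with the strategy used in \cite[Section 9]{Nesh-FrKMW}.
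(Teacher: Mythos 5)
Your definition of $tr^L_k$ by precomposition with $T_{L/k}\boxtimes\id_{\Delta^\bullet}$ is the intended one, and "check the identity on generators" is the right general shape. But the step you rest everything on is false: Morel's transfer is \emph{not} characterized by the projection formula together with its value on $GW$, because $\mathrm{K}^{\mathrm{MW}}_n(L)$ is not generated over $GW(L)$ by symbols pulled back from $k$. The relation $[ab]=[a]+[b]+\eta[a][b]$ only decomposes symbols multiplicatively inside $L^\times$; it does not let you rewrite $[a]$, $a\in L^\times\setminus k^\times$, as a combination of $k$-symbols with $GW(L)$-coefficients (already $\mathrm{K}^{\mathrm{MW}}_1(\mathbb C)$ is far from being generated over $GW(\mathbb C)$ by the image of $\mathbb R^\times$). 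So your reduction of the degree-$n$ statement to the degree-$0$ statement collapses, and the "new generators" you mention in passing are precisely where the content of the lemma lies. This is why the paper's proof goes through Lemma \ref{lm:GWKS} (Neshitov's Lemma 9.3): for $[L:k]=l$ prime with $k$ having no prime-to-$l$ extensions, $\mathrm{K}^{\mathrm{MW}}_n(L)$ is generated by $GW(L)$-multiples of classes from $k$ together with the special elements $\langle\pm\omega_0(a)\rangle[a]$, $\omega_0(a)=p'(a)$ the derivative of the minimal polynomial; the two transfers must then be compared on these special elements, which requires the explicit framed computation with minimal polynomials and the Steinberg relation, not just the projection formula.

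You also omit the reduction that makes those lemmas applicable: a general finite extension has to be broken into a tower of prime-degree extensions and one passes to (a colimit of) prime-to-$l$ extensions so that the hypotheses of Lemmas \ref{lm:TrDiagGW} and \ref{lm:GWKS} ($l$ prime, no prime-to-$l$ extensions, hence all roots of $p'$ rational) hold; the degree-zero compatibility is only proved by the paper under these hypotheses. Finally, your degree-zero computation sketch — identifying $(\A^1_L,ux)\circ T_{L/k}$ with the trace form $(x,y)\mapsto\mathrm{Tr}_{L/k}(uxy)$ — is not what the paper does and carries its own unproved ingredient, namely that Morel's $Tr^L_k$ on $GW$ agrees with the Scharlau trace-form transfer; the paper instead compares the framed class directly with the geometric transfer $\tau^L_k$ through the explicit correspondence built from $xp'(x)p(x)$ (Lemma \ref{lm:TrDiagGW}). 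As written, the proposal therefore has a genuine gap in all degrees $n\geq 1$ and an unjustified shortcut in degree $0$.
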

\begin{proof}
The claim follows similar as in \cite[lemma 9.5]{Nesh-FrKMW}
using lemmas \ref{lm:TrDiagGW} and \ref{lm:GWKS} and the Steinberg relation.
\end{proof}

\begin{lemma}[Lemma 7.9 \cite{Nesh-FrKMW}.]\label{lm:TrDiagGW}
Let $k$ be a field, $\chark k\neq 2$. Suppose $L/k$ is separable field extension, $[L\colon k]=l$ is a prime number, and $k$ has no prime-to-$l$ extensions.
Then the transfer diagram is commutative
$$\xymatrix{
\mathrm{K}^\mathrm{MW}_0(L)\ar[d]^{Tr^L_k} \ar[r]^{\Psi_0} & H^0(\mathbb ZF(\Delta_L^\bullet,\pt_L)\ar[d]^{tr_{L/k}} \\
\mathrm{K}^\mathrm{MW}_0(k)\ar[r]^{\Psi_0} & H^0(\mathbb ZF(\Delta_k^\bullet,\pt_k)
}$$
where $Tr^L_k$ is the transfer map homomorphism fro Milnor-Witt K-theory defined in \cite[Definition 4.28]{M-A1Top}.
\end{lemma}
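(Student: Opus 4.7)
The plan is to verify the commutativity of the square by evaluating both composites on a set of generators of $\K^\MW_0(L) = GW(L)$. Since $\Psi_0$ has already been identified as an isomorphism in the course of proving Lemma \ref{lm:Psi} (via Lemma \ref{lm:Phi} together with the moving Lemma \ref{lm:standFrptpt}), and since $GW(L)$ is additively generated by one-dimensional forms $\langle a\rangle$ with $a\in L^\times$, it suffices to check
\[
 tr_{L/k}\bigl(\Psi_0\langle a\rangle\bigr) \;=\; \Psi_0\bigl(Tr^L_k\langle a\rangle\bigr)
\]
for each such $a\in L^\times$.

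First I would make both transfers explicit. For Morel's MW-transfer \cite[Definition 4.28]{M-A1Top}, choose $\alpha$ with $L=k(\alpha)$ and let $p(t)\in k[t]$ be its minimal monic polynomial; unwinding Morel's definition identifies $Tr^L_k\langle a\rangle$ with (the class in $GW(k)$ of) the Scharlau trace form of $\langle a\rangle$ taken with respect to the $k$-linear functional on $L$ dual to the basis $1,\alpha,\dots,\alpha^{l-1}$. For the framed side, I would use the explicit framed correspondence $T_{L/k}\in Fr_1(\pt_k,\Spec L)$ of Definition \ref{def:T_K/k}: the transfer $tr_{L/k}$ is realized, up to $\A^1$-homotopy, by composition with $T_{L/k}$ followed by the structural projection $pr_{L/k}\colon\Spec L\to\Spec k$. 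Applied to the representative $(\A^1_L, ax)$ of $\Psi_0\langle a\rangle$ and simplified using Lemma \ref{sblm:prT}, this produces a concrete explicit framed correspondence in $Fr_*(\pt_k,\pt_k)$ supported on $Z(p)\subset\A^1_k$, whose framing function is built from $p$, $a$ and the local Jacobian $p'(\alpha)$.

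The main obstacle is matching these two explicit classes inside $GW(k)$. The geometric construction yields, under $\Psi_0^{-1}$, a rank-$l$ symmetric bilinear form on $L$ viewed as a $k$-vector space, whose diagonalization involves a twist by $p'(\alpha)$ coming from the deformation of the framing on the \'etale neighbourhood $\mathcal V$; the MW-theoretic construction yields the Scharlau trace form directly. The standard identity relating these two forms in $GW(k)$ closes the square, and here the hypothesis that $k$ admits no prime-to-$l$ extensions enters decisively: any discrepancy between the two forms would be detected by some residue extension of $k$ of degree coprime to $l$, but no such extension exists, so the obstruction class is forced to vanish. With this final comparison both composites agree on every generator $\langle a\rangle$, and the lemma follows.
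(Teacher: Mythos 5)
There is a genuine gap at the decisive step. Your reduction to generators $\langle a\rangle$ and the idea of making both transfers explicit matches the paper's strategy, but the comparison you propose at the end is not an argument: you invoke an unspecified ``standard identity'' between the Scharlau/trace form and the form produced geometrically, and then claim that any discrepancy ``would be detected by some residue extension of $k$ of degree coprime to $l$'' and hence vanishes. No such detection principle is available (a difference of classes in $GW(k)$ is not controlled by restriction to prime-to-$l$ extensions, and none is set up here), so the square is never actually closed. Moreover you have misidentified where the hypothesis that $k$ has no prime-to-$l$ extensions enters: in the paper it is used in a completely concrete way, namely that the derivative $p'(x)$ of the minimal polynomial has degree $<l$, so all of its roots lie in extensions of degree prime to $l$ and are therefore $k$-rational, $xp'(x)=c_d(x-\lambda_1)^{r_1}\cdots(x-\lambda_g)^{r_g}$.

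This rationality is what makes both sides computable and visibly equal. On the framed side, $tr_{L/k}(\Psi_0\langle\alpha\rangle)$ (for $L=k(\alpha)$, which one may assume since $l$ is prime) is homotoped to a correspondence with framing function $xp'(x)p(x)$, and after splitting off the part supported away from $Z(p)$ its image under $\Phi_0$ is evaluated by the explicit rank-one formula of \cite[Lemma 7.9]{Nesh-FrKMW} as $\langle c_d\rangle s_\varepsilon-\sum_i (r_i)_\varepsilon\langle c_d p(\lambda_i)\lambda_i\prod_{j\neq i}(\lambda_i-\lambda_j)^{r_j}\rangle$ -- this formula only applies because the support consists of rational points. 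On the Milnor--Witt side, the paper does not identify $Tr^L_k$ with a Scharlau trace form (your ``unwinding Morel's definition'' is itself a nontrivial claim); it writes $Tr^L_k(\langle\alpha\rangle)=\tau^L_k(\langle p'(\alpha)\alpha\rangle)$ via the geometric transfer and computes the residue $\partial^{-1/x}_\infty$ of $[xp'(x)p(x)]$ term by term over the same rational roots $\lambda_i$, landing on literally the same expression. Equality then follows since $\Phi_0$ is inverse to $\Psi_0$. To repair your write-up you would need to replace the obstruction-vanishing paragraph by this explicit two-sided computation (or supply an actual proof both of the Scharlau-form description of Morel's transfer and of the identification of $\Phi_0$ of the transferred class with a twisted trace form), and use the no-prime-to-$l$ hypothesis where it is really needed, namely to split $p'$ into linear factors over $k$.
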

\begin{proof}
We repeat the proof from \cite{Nesh-FrKMW}.

Take $L=k(\alpha)$. Since $l$ is prime, we may assume that $L=k(\alpha)$.
Then $tr_{L/k}(\Psi(\langle\alpha\rangle))=tr_{L/k}(\langle\alpha\rangle)=(\A^1_L,\alpha(x-\alpha),pr_k)$. 
Let $p(x)$ be the minimal monic polynomial of $\alpha$.
Then $(\A^1_L,\alpha(x-\alpha),pr_k)\sim (U^\prime, xp^\prime(x)p(x), pr_k)$, where $U^\prime\subset \A^1_L$ is an open subset does not contain any root of $xp^\prime(x)p(x)$ except $\alpha$.
Take $U\subset \A^1_k$ to be the image of $U^\prime$.
Then $tr_{L/k}(\Psi(\langle\alpha\rangle))=(U^\prime, xp^\prime(x)p(x), pr_k)=(U, xp^\prime(x)p(x), pr_k)=\langle xp^\prime(x)p(x)\rangle - (\A^1_k-Z(p), xp^\prime(x)p(x), pr_k) = \langle c_d x^{l+d+1}\rangle - (\A^1_k-Z(p), xp^\prime(x)p(x), pr_k)$, where $d=\deg p^\prime(x)<l$, and $c_d$ is the leading coefficient of $p^\prime(x)$. Since $k$ has no prime-to-$l$ extensions, all roots of $p^\prime(x)$ are rational, so $xp^\prime(x)=c_d(x-\lambda_1)^{r_1}(x-\lambda_g)^{r_g}$.
Then by \cite[Lemma 7.9]{Nesh-FrKMW} 
$$
\Phi_0(tr_{L/k}\Psi_0(\langle \alpha \rangle)) = 
\langle c_d\rangle s_\varepsilon - \big(\sum\limits_{i=1}^g (r_i)_\epsilon \langle c_d p(\lambda_i) \lambda_i \prod\limits_{j\neq i} (\lambda_i-\lambda_j)^{r_j}\rangle\big), 
s = 1+d+l.
$$
Let $\tau^L_k$ be the geometrical transfer map defined in \cite[4.2]{M-A1Top}.
Then
$
Tr^L_k(\langle\alpha\rangle) = \tau^L_k(\alpha)(\langle p^\prime(\alpha)\alpha \rangle) = -\partial_\infty^{-1/x}([xp^\prime(x)p(x)]- \sum_{i=1}^g [(x-\lambda_i)^{r_i} c_d p(\lambda_i)\lambda_i\prod_{j\neq i} (\lambda_i- \lambda_j)^{r_j} ]) =  \langle c_d\rangle s_\epsilon  - \big(\sum_{i=1}^g (r_i)_\epsilon \langle c_d p(\lambda_i)\lambda_i\prod_{j\neq i} (\lambda_i- \lambda_j)^{r_j} \rangle\big)
$ by \cite[Lemma 7.9]{Nesh-FrKMW}.
Thus $\Phi_0(tr_{L/k}\Psi_0(\langle\alpha\rangle))=Tr^L_k(\alpha)$. Hence the diagram commutes since $\Psi_0\circ \Phi_0$ is identity.
\end{proof}

\begin{lemma}[lemma 9.3 \cite{Nesh-FrKMW}]\label{lm:GWKS}
Let $k$ be a field, $\chark k\neq 2$.
Suppose $L/k$ is separable field extension, $[L\colon k]=l$ is a prime number, and $k$ has no prime-to-$l$ extensions.
Then $ $ is generated as abelian group by $GW(L) \mathrm{K}^\mathrm{MW}_1(k)+S$, where $S=\{\langle\pm\omega_0(a)\rangle[a] | a\in L^\times\}$, where for any $a\in \L^\times$ $\omega_0(a)=p^\prime(a)$ and $p^\prime$ is a derivative of the minimal polynomial of $a$.
\end{lemma}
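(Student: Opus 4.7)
The plan is to reduce the statement to showing that, for each $a \in L^\times$ and each $\alpha \in GW(L)$, the generator $\alpha\cdot [a]$ of $\mathrm{K}^\mathrm{MW}_1(L)$ lies in $GW(L)\cdot \mathrm{K}^\mathrm{MW}_1(k) + S$. Since $\mathrm{K}^\mathrm{MW}_1(L)$ is generated as an abelian group by symbols of this form, this suffices. Because $[L:k] = l$ is prime, every $a \in L^\times$ satisfies either $a \in k^\times$ or $L = k(a)$. In the first case $\alpha\cdot [a] \in GW(L)\cdot \mathrm{K}^\mathrm{MW}_1(k)$ trivially, so the entire work is in the second case.

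Assume $L = k(a)$ and let $p(x) \in k[x]$ be the minimal polynomial of $a$, so that $\deg p = l$ and $\omega_0(a) = p'(a)$. The key use of the hypothesis that $k$ admits no prime-to-$l$ extensions is the observation that every irreducible polynomial in $k[x]$ has degree a power of $l$. Since $\deg p' = l-1 < l$, it follows that $p'$ factors completely into linear factors over $k$: $p'(x) = c\prod_{j=1}^{l-1}(x-b_j)$ with $c, b_j\in k$. In particular $[p'(a)] \in \mathrm{K}^\mathrm{MW}_1(L)$ can be expanded in terms of symbols $[a - b_j]$ and $[c]$ using $[xy] = [x] + \langle x\rangle[y]$.

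The main computational step is to show that for each $j$, the degree-$2$ product $[a]\cdot[a - b_j]$ lies in $GW(L)\cdot \mathrm{K}^\mathrm{MW}_1(k)$. When $b_j = 0$ this is immediate from graded commutativity (the product equals $\langle -1\rangle [a]^2$, but in fact $[a]\cdot[a]$ is already in the $GW$-span of $[-1]\cdot[a]$ by the standard identity $[a]^2 = [a][-1]$ in $\mathrm{K}^\mathrm{MW}$). When $b_j \neq 0$, apply the Steinberg relation $[a/b_j][1 - a/b_j] = 0$; unfolding via $[a/b_j] = [a] - \langle a/b_j\rangle[b_j]$ and $[1 - a/b_j] = [b_j - a] - \langle (b_j-a)/b_j\rangle[b_j]$ (or equivalently $a - b_j = -b_j(1 - a/b_j)$), one obtains $[a]\cdot[a - b_j]$ as a $GW(L)$-combination of products of the form $[a]\cdot[b_j]$, $[b_j]\cdot[\cdots]$, and the vanishing Steinberg pair. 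Since $b_j \in k$, all residual products lie in $GW(L)\cdot \mathrm{K}^\mathrm{MW}_1(k)$, as does $[c]\cdot[a]$ since $c \in k$. Summing the expansion of $[p'(a)]$ and multiplying by $[a]$ then gives $[a]\cdot [p'(a)] \in GW(L)\cdot \mathrm{K}^\mathrm{MW}_1(k)$.

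To close, use Morel's identity $\langle u\rangle = 1 + \eta[u]$, which yields $(1 - \langle \pm p'(a)\rangle)\cdot [a] = \mp \eta\cdot [p'(a)]\cdot [a] = \mp\langle -1\rangle\eta\cdot [a]\cdot[p'(a)]$. The right-hand side lies in $\eta\cdot GW(L)\cdot \mathrm{K}^\mathrm{MW}_1(k) \subseteq GW(L)\cdot \mathrm{K}^\mathrm{MW}_1(k)$ by the previous step (since $\eta$ multiplies through $GW$-coefficients, and $\eta\cdot [b] = \langle b\rangle - 1 \in GW(k)$ for $b\in k^\times$). Therefore $[a] \equiv \langle \pm \omega_0(a)\rangle\cdot[a] \pmod{GW(L)\cdot \mathrm{K}^\mathrm{MW}_1(k)}$, so $\alpha\cdot[a] = \alpha\cdot \langle \pm \omega_0(a)\rangle\cdot [a] + (\text{element of }GW(L)\cdot\mathrm{K}^\mathrm{MW}_1(k))$, and the first summand belongs to $GW(L)\cdot S$. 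The main obstacle is the bookkeeping of the Steinberg manipulation: one must carefully track the $\epsilon$-twists (the $\langle -1\rangle$'s arising from graded commutativity and from the formula $[xy] = [x] + \langle x\rangle [y]$) and treat the degenerate cases $b_j = 0$ and $a = b_j$ (which does not arise since $a \notin k$) separately. Once these manipulations are executed carefully, the proof is complete.
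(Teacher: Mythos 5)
Your overall spine -- reduce to symbols $\langle u\rangle[a]$, observe that for $a\notin k$ one has $L=k(a)$ and that the no-prime-to-$l$ hypothesis forces every polynomial over $k$ of degree $<l$ to split into linear factors (for this you only need: an irreducible of degree $d$ with $1<d<l$ would give a prime-to-$l$ extension, which is cleaner than your blanket ``every irreducible has $l$-power degree''), then use the Steinberg relation to rewrite the products $[a][a-b_j]$, $b_j\in k$, in terms of symbols with one entry from $k^\times$ -- is the same as in Neshitov's Lemma 9.3, which the paper's proof simply cites (dropping the equality $d+m=l-1$ so the argument works when $\deg p'<l-1$). However, your final step does not prove the stated lemma. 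You establish $[a]\equiv\langle\pm\omega_0(a)\rangle[a]\pmod{GW(L)\,\mathrm{K}^\mathrm{MW}_1(k)}$ and then multiply by an arbitrary $\alpha\in GW(L)$, concluding that $\alpha[a]$ lies in $GW(L)\cdot S$ modulo $GW(L)\,\mathrm{K}^\mathrm{MW}_1(k)$. But the generating set of the lemma is $GW(L)\,\mathrm{K}^\mathrm{MW}_1(k)\cup S$ with $S$ \emph{untwisted}: an element $\langle u\rangle\langle\pm\omega_0(a)\rangle[a]$ with $u\in L^\times$ is not among the generators, and this is exactly the force of the lemma -- in Lemma \ref{lm:Tr} the transfer of an $S$-generator is computed geometrically, while a factor $\langle u\rangle\notin GW(k)$ cannot be removed by the projection formula. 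As written you have proved generation by $GW(L)\,\mathrm{K}^\mathrm{MW}_1(k)+GW(L)\cdot S$, which is strictly weaker.

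The gap is fixable with the tools you already deploy, but the missing step must be stated and proved: $\eta[u][a]\in GW(L)\,\mathrm{K}^\mathrm{MW}_1(k)$ for \emph{every} $u\in L^\times$, not only for $u=\pm p'(a)$. Write $u=g(a)$ with $g\in k[t]$ of degree $<l$ (possible since $1,a,\dots,a^{l-1}$ is a $k$-basis of $L$); by the same splitting argument $g(t)=c\prod_i(t-c_i)$ with $c,c_i\in k$, so $[u]$ is a $GW(L)$-combination of $[c]$ and the $[a-c_i]$, and your Steinberg computation applies verbatim to each factor. Then $\langle u\rangle[a]=[a]+\eta[u][a]\equiv[a]\equiv\langle\pm\omega_0(a)\rangle[a]\pmod{GW(L)\,\mathrm{K}^\mathrm{MW}_1(k)}$ for all $u$, hence $\alpha[a]\equiv\operatorname{rank}(\alpha)\cdot\langle\pm\omega_0(a)\rangle[a]$, which does lie in the subgroup generated by $GW(L)\,\mathrm{K}^\mathrm{MW}_1(k)\cup S$. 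While repairing this, also fix the bookkeeping in the ``main computational step'': the assertion ``$[a][a-b_j]\in GW(L)\,\mathrm{K}^\mathrm{MW}_1(k)$'' is a degree mismatch (the left side has degree $2$); what you need, and what your manipulation actually yields, is $[a][a-b_j]=[-b_j][a]+\langle-b_j\rangle[1-a/b_j][b_j]$ up to units, i.e.\ a $GW(L)$-combination of symbols with a $k^\times$-entry, after which one must commute (picking up $-\langle-1\rangle$) so that the $k^\times$-entry sits in the slot that survives the identity $\eta[x][c]=(\langle x\rangle-1)[c]$; applying $\eta$ to the $k$-entry instead would leave a term $(\langle c\rangle-1)[x]$ with $x\notin k^\times$ and would not conclude.
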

\begin{proof}
The proof is the same as for \cite[lemma 9.3]{Nesh-FrKMW} just without the equality $d+m=l-1$ in the fourth row of the proof.
\end{proof}

\begin{lemma}\label{lm:planRC:simplFrcor}
For any element in $\mathbb ZF(\pt_k, \G^{\wedge n}_m)$ the class $[c]\in H^0(\mathbb Z(\Delta^\bullet_k,\G_m^{\wedge n}))$ is equivalent to the class of 
some $\tilde c = \in \mathbb ZF_1(\pt_k, \G^{\wedge n}_m)$ 
such that the support of $c$ 
is smooth. %
\end{lemma}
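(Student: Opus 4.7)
The plan is to pass from $c$ to an $\A^1$-equivalent representative that is simple in the sense of Voevodsky, i.e., whose framing is \'etale in a Zariski neighbourhood of its support; this automatically makes the support smooth over $\Spec k$.

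First I would fix an explicit representative $(Z,\mathcal V,\phi_1,\ldots,\phi_m,g)$ of $[c]$ at some level $m$, so that $Z\subset \A^m_k$ is a zero-dimensional closed subscheme finite over $\Spec k$. Since $k$ is perfect, smoothness of $Z$ is equivalent to $Z$ being \'etale over $\Spec k$, i.e., $Z\simeq \coprod_j\Spec L_j$ for finite separable extensions $L_j/k$; so the goal is to exhibit an $\A^1$-equivalent correspondence whose support is \'etale.

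The reduction proceeds in two stages, using the moving lemmas of Section \ref{sect:MoovLm}. First, Lemma \ref{prelm:GSO} is invoked to deform $(\phi_1,\ldots,\phi_m)$ via an $\A^1$-homotopy into generic position, without altering $Z$ nor the restriction of $\phi$ to the first-order neighbourhood of $Z$; this prepares the transversality needed in the next stage. Then Lemma \ref{lm:iOScormove} is applied iteratively, for $i$ running from $m$ down to $1$, each iteration producing an $\A^1$-homotopic correspondence that is $(i)$-simple in the sense of Definition \ref{def:iSmooth}. At $i=1$ the resulting correspondence $\tilde c$ is simple in the classical sense, so $\phi$ is \'etale in a Zariski neighbourhood of its zero locus $\tilde Z=\phi^{-1}(0)$. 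Combined with the smoothness of $\mathcal V\subset\A^m_k$, this forces $\tilde Z$ to be \'etale over $\Spec k$, hence smooth; thus $\tilde c$ represents the same class as $c$ in $H^0(\mathbb ZF(\Delta^\bullet_k,\G_m^{\wedge n}))$ and has smooth support, as required.

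The main obstacle lies entirely in the moving lemmas themselves, which constitute the technical heart of Section \ref{sect:MoovLm}. The notion of $(i)$-simpleness from Definition \ref{def:iSmooth} is a continuous refinement of simpleness designed precisely so that the iterative step from level $i+1$ to level $i$ can be achieved by an explicit $\A^1$-homotopy over any perfect base field, thereby bypassing the characteristic-zero generic smoothness argument of \cite[Lemma 4.11]{Nesh-FrKMW}. Once Lemmas \ref{prelm:GSO} and \ref{lm:iOScormove} are in place, the present statement is an immediate iterated application of them.
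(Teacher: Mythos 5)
Your first stage matches the paper: the smoothing of the support is obtained exactly as you describe, by the machinery of Section \ref{sect:MoovLm} (Lemma \ref{prelm:GSO} feeding into Lemmas \ref{lm:IndBase} and \ref{lm:IndStep}, packaged as Lemma \ref{lm:iOScormove}). Two small imprecisions there: Lemma \ref{lm:iOScormove} is applied once, for the single relevant value of $i$ (the descending induction over $i$ is internal to its proof, via Lemma \ref{lm:IndStep}), and its output is not a single $\A^1$-homotopic correspondence but a difference $[c]=[c^+]-[c^-]$ of $(i)$-simple correspondences; this is harmless since the target class lives in the linear group $\mathbb ZF$, but you should state it that way.

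The genuine gap is that you never address the second requirement of the statement: the representative $\tilde c$ must lie in $\mathbb ZF_1(\pt_k,\G_m^{\wedge n})$, i.e.\ be of framing level $1$, whereas your construction ends with a level-$m$ correspondence (whatever level the chosen explicit representative had). Smoothness of the support alone is only half of the claim, and the level-$1$ form is what is actually used downstream (in Lemma \ref{lm:sur}, following Neshitov's Proposition 9.6, the surjectivity argument manipulates level-one correspondences of the shape defining Milnor--Witt symbols and traces). The paper closes this gap by invoking \cite[Lemma 4.10]{Nesh-FrKMW}: a \emph{simple} correspondence of level $n$ is $\A^1$-equivalent to a simple correspondence of level $1$; note that this level-reduction needs simpleness as an input, which is precisely why the smoothing step is performed first. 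You need to add this step (or an equivalent level-reduction argument) to actually obtain $\tilde c\in\mathbb ZF_1(\pt_k,\G_m^{\wedge n})$.
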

\begin{proof}
Any framed correspondences is equivalent to a correspondences in 
$\mathbb ZF(\pt_k, \G^{\wedge n}_m)$ 
that support is smooth,
by lemma \ref{lm:iOScormove} (with $i=0$), 
which is proven in the next section.
In other words this means that the support is a set of points (with the separable residue fields).
Now since any simple correspondences in $\mathbb ZF(\pt_k, \G^{\wedge n}_m)$ 
is equivalent to a simple one in $\mathbb ZF_1(\pt_k, \G^{\wedge n}_m)$ by \cite[Lemma 4.10]{Nesh-FrKMW}
the claim follows.
\end{proof}

\section{Moving lemma}\label{sect:MoovLm}

In this section we prove Lemma \ref{lm:planRC:simplFrcor}. 
Throughout we assume that $k$ is perfect. 
\begin{definition}\label{def:iSmooth}
Let
$c=(Z\subset \A^n,v\colon \mathcal V\to \A^n, \varphi=(\varphi_i)\in k[\mathcal V]^n,g\colon \mathcal V\to Y)\in Fr_n(\pt_k,Y)$
be a framed correspondence such that 
$v$ is an open immersion, and $Y\subset \A^e$ is an open subscheme.
Then $c$ is said to be an $(i)$-\emph{simple} correspondence for $i=1,\dots n$ iff
there is a vector of sections $(s_j)_j, s_j\in \Gamma(\P^n,\cO(d_j))$,
such that 
\begin{itemize}
\item[1)]
$v^*(s_j/t_\infty^{d_j})\big|_{Z(I(Z)^2)}=\varphi_j\big|_{Z(I(Z)^2)}$,
$s_j\big|_{\PP^{n-1}}=t_j^{d_j}$, $j=1,\dots,n$,
\item[2)]
$Z\subset \P^n - B_i$, where $B_i=\bigcup_{1\leq j< i} \Sing Z_{red}(s_1,\dots ,s_j)$,
and 
\item[3)]
$Z_{red}(s_1,\dots,s_{i-1})\cap Z(s_{i},\dots,s_n)$ is smooth.
\end{itemize}
\end{definition}
\begin{remark}\label{rm:siml0}
Because of the condition (3), 
any $(1)$-simple correspondences $c$ is simple, i.e., the (non-reduced) support of $c$ is smooth. 
\end{remark}

\begin{lemma}\label{prelm:GSO}
Suppose $k$ is perfect. Let $s=(s_i)_i$ 
be a vector of sections 
$s_i\in \Gamma(\PP^n, \cO(d_i))$ such that
$s_i\big|_{\PP^{n-1}}=t_i^{d_i}$, and denote $Z=Z(s)$.
Then there is a vector of sections $\os=(\os_i)$, $\os_i\in \Gamma(\P^n,\cO(d_i^\prime))$ 
such that $\os_i\big|_{\PP^{n-1}}=t_i^{d_i^\prime}$, $\os\big|_{\dub{Z(s)} }= (s t_\infty^{d_i^\prime-d_i})\big|_{\dub{Z(s)} }$, %
and such that 
$Z_{red}(\os_1,\dots \os_{n-1})\not\subset B_n$, where $$B_n= \bigcup\limits_{1\leq i< n}\Sing Z_{red}(\os_1,\dots \os_i).$$ %
Here $\PP^{n-1} \subset \PP^{n}$ is the subspace at infinity and $t_\infty\in \cO(1)$, $Z(t_\infty)=\P^{n-1}$.
\end{lemma}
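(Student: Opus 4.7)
The plan is to build $\os_i$ as small perturbations of $s_i \cdot t_\infty^{d_i^\prime - d_i}$ that agree with it on the first-order thickening $\dub{Z(s)}$ and restrict to $t_i^{d_i^\prime}$ at infinity, and then to run a Bertini-type induction to force condition (3).

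Since $Z = Z(s) \subset \A^n_k = \P^n - \P^{n-1}$, the two closed subschemes $\P^{n-1}$ and $\dub{Z(s)}$ are disjoint in $\P^n$. For $d_i^\prime \gg 0$ the cohomology group $\coh^1(\P^n, \mathcal J \cdot \cO(d_i^\prime))$ vanishes, where $\mathcal J = I(\P^{n-1}) \cdot I(Z(s))^2 = (t_\infty) \cdot I(Z(s))^2$. By the Chinese-remainder-style short exact sequence associated to the disjoint pair $\P^{n-1}$, $\dub{Z(s)}$, there then exists a particular lift $\os_i^{(0)} \in H^0(\P^n, \cO(d_i^\prime))$ restricting to $t_i^{d_i^\prime}$ on $\P^{n-1}$ and to $(s_i\, t_\infty^{d_i^\prime - d_i})|_{\dub{Z(s)}}$ on $\dub{Z(s)}$. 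Every section satisfying the two restriction conditions of the conclusion then differs from $\os_i^{(0)}$ by an element of $V_i := H^0(\P^n, \mathcal J \cdot \cO(d_i^\prime))$.

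I would next choose $\os_1, \dots, \os_{n-1}$ one at a time. Set $Y_{i-1} = Z_{red}(\os_1, \dots, \os_{i-1})$ and $U = \P^n - (\P^{n-1} \cup \dub{Z(s)})$. Assuming inductively that $Y_{i-1} \cap U$ is smooth at a dense open subset of each of its components, for $d_i^\prime$ large enough $\mathcal J \cdot \cO(d_i^\prime)$ is globally generated on $U$, so the affine linear system $\os_i^{(0)} + V_i$ is basepoint-free on $U$. Applying Bertini over the perfect field $k$ yields a member $\os_i$ for which $Y_i = Z_{red}(\os_1, \dots, \os_i)$ is again generically smooth on $U$. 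Iterating up to $i = n-1$, each $\Sing Y_i \cap U$ is a proper closed subset of $Y_i$ for $i < n$, so $B_n \cap Y_{n-1}$ is contained in a finite union of proper closed subsets of the components of $Y_{n-1} \cap U$, giving $Y_{n-1} \not\subset B_n$. Any $\os_n$ meeting the two restriction conditions of the conclusion then completes the construction.

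The principal obstacle is the standard one with Bertini-style arguments: producing a $k$-rational point inside the Bertini-open locus of good sections. Over an infinite $k$ this is automatic; over a finite perfect $k$, one can either enlarge $d_i^\prime$ so that the parameter space $V_i$ supplies a rational point in the Bertini-open subset via Lemma \ref{lm:bigK-Aff}, or pass to an infinite perfect extension $K/k$, run the argument there, and descend back to $k$ using the finite descent theorem of Section \ref{sect:Finite Descent}.
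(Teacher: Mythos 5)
There is a genuine gap at the final inference of your induction. Your inductive invariant --- that $Y_{i}\cap U$ is smooth on a dense open subset of each component --- is automatic over a perfect field for any reduced scheme, so it carries no information; and from ``$\Sing Y_i\cap U$ is a proper closed subset of $Y_i$'' you cannot conclude that $B_n\cap Y_{n-1}$ sits in proper closed subsets of the components of $Y_{n-1}$. The scheme $Y_{n-1}$ has strictly smaller dimension than $Y_i$ for $i<n-1$, so it may be \emph{entirely contained} in $\Sing Y_i$ (e.g.\ $Y_1$ a surface whose singular locus is a curve, and the later sections cutting out exactly that curve). This is precisely the degenerate configuration the lemma has to exclude, so the step ``generic smoothness of each $Y_i$ $\Rightarrow$ $Y_{n-1}\not\subset B_n$'' is where the actual content is missing. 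A Bertini route could in principle work, but only if you upgrade the invariant to genuine smoothness of $Y_i\cap U$ (away from the forced locus), which raises the usual characteristic-$p$ issues for basepoint-free systems and is more than the lemma needs.

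The paper's proof avoids Bertini altogether and in particular needs no rational points, so your finite-field patch is also off target: the finite descent of Section \ref{sect:Finite Descent} applies to lifting diagrams of framed correspondences up to $\A^1$-homotopy, not to producing a section of a line bundle over $k$ from sections over two coprime extensions, and Lemma \ref{lm:bigK-Aff} only yields points over sufficiently large extensions $K/k$, not over $k$. Instead, the paper argues by induction on $l$ with a dimension count: since $Z_{l-1}=Z_{red}(\os_1,\dots,\os_{l-1})$ is reduced and $k$ is perfect, its smooth locus is dense, whence the bad locus $B'_{l}=Z_{l-1}\cap B_l$ is a proper closed subset of the pure $(n-l+1)$-dimensional $Z_{l-1}$, so $\dim B_l\le n-l$. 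One then picks a finite set $P$ of closed points, one in each component of $(B_l)_{red}$ away from $Z(s)\cup\P^{n-1}$, and uses Serre's theorem to produce $\os_l$ with the prescribed restrictions to $\dub{Z(s)}$ and $\P^{n-1}$ and invertible on $P$; since $\dim Z_l\ge n-l\ge\dim B_l$ but $Z_l$ misses $P$, it cannot contain a component of $(B_l)_{red}$, hence $Z_l\not\subset B_l$. This works over every perfect field, finite or not, with perfectness used only for density of the smooth locus.
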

\begin{proof}
Firstly we will prove that $\forall l\in \mathbb Z$, $1\leq l\leq n$,
there is a 
vector of sections $\os=(\os_i)_{i=1\dots n}$, $\os_i\in \Gamma(\P^n,\cO(d_i^\prime))$ 
such that $\os_i\big|_{\dub{Z(s)} }= (s_i t_\infty^{d_i^\prime-d_i})\big|_{\dub{Z(s)} }$, $\os_i\big|_{\PP^{n-1}}=t_i^{d_i^\prime}$ and such that 
\begin{equation}\label{eq:sectGensmCond} 
Z_{red}(\os_1,\dots \os_{l-1})\not\subset B_{l-1},  \;\text{where } B_{l-1}=\bigcup_{1\leq i< l-1} \Sing Z_{red}(\os_1,\dots \os_i)
\end{equation}

The proof is by induction on $l\in \mathbb Z$. The base case $l=1$ is clear.
Assume inductively we have proven \eqref{eq:sectGensmCond} for some $l$.
We will prove the claim for $l+1$.

Let $(s_i)$ be a vector of sections such that condition \eqref{eq:sectGensmCond} holds. %
Now we need to construct a section $\os_l\in \Gamma(\P^1, \cO(d_l^\prime))$ 
such that 
$\os_l\big|_{\PP^{n-1}}=t_l^{d_l}$,
$\os_l/t_\infty^{d_l^\prime}\big|_{\dub{Z(s)}}=s_l/t_\infty^{d_l}\big|_{\dub{Z(s)}}$, 
and such that
$$Z_{red}(\os_1,\dots \os_{l-1},\os_l)\not\subset B_{l} = \bigcup_{1\leq i< l} \Sing Z_{red}(\os_1,\dots \os_i). $$

Consider the scheme $Z_{l-1}=Z_{red}(\os_1,\dots \os_{l-1})$ and the reduced closed subschemes 
$B_{l-1}^\prime= Z_{l-1}\cap B_{l-1}$, $B_{l}^\prime= Z_{l-1}\cap B_{l}$.
Note that since $s_i\big|_{\P^{n-1}}=t_i^{d_i}$, it follows that $Z_{l-1}$ is of pure dimension $n-l+1$.
Since $k$ is perfect and $Z_l$ is reduced $Z_{l-1}-\Sing(Z_{l-1})$ is open dense in $Z_{l-1}$.
Recall that by the induction assumption $ Z_{l-1}\not\subset B_{l-1}$.
Then $Z_{l-1}\not\subset B^\prime_{l-1}$,
and by the above this implies that $Z_{l-1}-\Sing(Z_{l-1})\not \subset B^\prime_{l-1}$.
Hence $Z_{l-1}\not\subset B^\prime_{l-1}\cup \Sing Z_{l-1} = B^\prime_{l}$.
Thus $B^\prime_{l}$ is a proper subset of $Z_{l-1}$,
and so $\dim B_{l} \leq n-l$.

Denote $B_{l}^{\prime\prime}= (B_{l}^\prime)_{red}$. Let $P\subset Z_{l-1}$ be a finite set of points in $Z_{l-1}-(Z\cup \P^{n-1})$ such that $P$ contains at least one point in each irreducible component of $B_{l}^{\prime\prime}-(Z(s)\cup \P^{n-1})$. %
Using Serre's theorem \cite[Theorem 5.2]{Hartshorne-AlG} we can choose a section $\os_l\in \Gamma(\mathbb P^n,\mathcal O(d^\prime_l))$ for some $d_i^\prime\in \mathbb Z$ such that 
$\os_l\big|_{\dub{ Z(s) } }=s_l t_{\infty}^{d^\prime_l-d_l}\big|_{\dub{ Z(s) } }$,
$\os_l\big|_{\PP^{n-1}}=t_l^{d_i^\prime}$,
and $\os_i$ is invertible on $P$.

Denote $Z_{l}=Z_{l-1}\cap Z(\os_l)$.
To get the claim of the induction step we have to show that $Z_{l}\not\subset B_{l}$.
By the above 
$\dim Z_{l} \geq \dim Z_{l-1}-1=n-l\geq\dim B_{l}^{\prime\prime}$. %
Hence if $Z_{l}\subset B_{l}^{\prime\prime}$, then $Z_{l}$ contains at least one reduced irreducible component $B$ of $B_{l}^{\prime\prime}$.
On the other hand, by construction of $\os_l$ the scheme $Z_{l}$ does not contain any such $B$,
since $Z_{l}\cap P=\emptyset$ and $P\cap B\neq\emptyset$.

Thus for $l=n$ we have sections $\os_i$, $i=1,\dots n$, satisfying 
$\os_i\big|_{\PP^{n-1}}=t_i^{d_i^\prime}$, $\os\big|_{\dub{Z(s)} }= (s t_\infty^{d_i^\prime-d_i})\big|_{\dub{Z(s)} }$, %
and such that 
$Z_{red}(\os_1,\dots \os_{n-1})\not\subset B_{n-1}$.
To finish the proof it is enough %
to note that since $k$ is perfect, it follows  that $Z_{red}( \os_1,\dots \os_{n-1})-  \Sing Z_{red}( \os_1,\dots \os_{n-1})$ is open dense in $Z_{red}( \os_1,\dots \os_{n-1})$. 
Hence  $Z_{red}(\os_1,\dots \os_{n-1})\not\subset B_{n-1}$ implies $Z_{red}(\os_1,\dots \os_{n-1})\not\subset B_{n}$.
\end{proof}

\begin{lemma}\label{lm:iOScormove}
Let $c= (Z\subset A^n,v\colon \mathcal V\to \A^n,\phi,g\colon \mathcal V\to Y)\in Fr_n(\pt_k,Y)$ for some open $Y\subset \A^e$.
Then for any $i$, $0\leq i\leq n$, there exist $c^+,c^-\in Fr_n(\pt_k,Y)$,
such that $c^+$ and $c^-$ are $(i)$-simple correspondences, and $[c]=[c^+]-[c^-]\in \overline{\mathbb ZFr_n}(\pt_k,Y)$.
\end{lemma}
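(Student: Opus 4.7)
The plan is a descending induction on $i$, from the base case $i = n$ down to $i = 0$. At each step I would modify one framing function of $c$ via a linear $\A^1$-homotopy whose underlying projective section is chosen by a Bertini-type refinement of Lemma \ref{prelm:GSO} to impose one additional smoothness condition, while preserving agreement with the original $\phi$ on the first-order thickening $\dub{Z}$.

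For the base case, fix lifts $s_j \in \Gamma(\P^n, \cO(d_j))$ of $\phi_j$ modulo $I(Z)^2$, normalized by $s_j|_{\P^{n-1}} = t_j^{d_j}$, which exist for $d_j \gg 0$ by Serre vanishing. Lemma \ref{prelm:GSO} then produces sections $\os_j$ satisfying condition (3) of Definition \ref{def:iSmooth} for $i = n$; since $Z$ is $0$-dimensional and disjoint from $\P^{n-1}$, a further application of Serre vanishing arranges $Z \cap B_n = \emptyset$, so condition (2) holds as well. Thus $c$ is itself $(n)$-simple, and one may take $c^+ = c$, $c^- = 0$.

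For the induction step $i+1 \Rightarrow i$, suppose $[c] = [c_{i+1}^+] - [c_{i+1}^-]$ with each summand $(i+1)$-simple with associated sections $(s_j)$. To upgrade each to $(i)$-simple I would replace $s_i$ by a section $s_i' \in \Gamma(\P^n, \cO(d_i'))$ satisfying (a) $s_i' \equiv s_i \, t_\infty^{d_i'-d_i} \pmod{I(Z)^2}$, (b) $s_i'|_{\P^{n-1}} = t_i^{d_i'}$, and (c) the intersection $Z_{\red}(s_1, \dots, s_{i-1}) \cap Z(s_i', s_{i+1}, \dots, s_n)$ is smooth. Existence of such $s_i'$ over $k$ is a Bertini/Serre-vanishing argument patterned on Lemma \ref{prelm:GSO}. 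Setting $\tilde\phi_i := v^*(s_i'/t_\infty^{d_i'})$, the difference $\tilde\phi_i - \phi_i$ lies in $I(Z)^2$, so the linear homotopy $H_\lambda = (\phi_1, \dots, (1-\lambda)\phi_i + \lambda\tilde\phi_i, \dots, \phi_n)$ preserves the Jacobian non-degeneracy along $Z \times \A^1$; hence $Z \times \A^1$ is an isolated component of the zero locus of $H$. After shrinking $\mathcal V \times \A^1$ to a Zariski neighborhood of $Z \times \A^1$ to exclude the extraneous components (which lie away from $Z$ and thus pose no obstruction), the homotopy produces an $\A^1$-equivalence between $c_{i+1}^+$ and an $(i)$-simple correspondence $c_i^+$, and analogously for $c_{i+1}^-$.

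The main obstacle will be the Bertini-type step in the induction. For arbitrary perfect $k$, I need both that condition (c) cuts out a dense open subset of the affine parameter space of sections satisfying (a) and (b), and that this open subset contains a $k$-rational point. Non-emptiness of the open set relies on generic smoothness of $Z_{\red}(s_1, \dots, s_{i-1})$, where perfectness of $k$ enters essentially; the existence of a $k$-rational point follows from the same stepwise selection procedure as in the proof of Lemma \ref{prelm:GSO}, adjoining at each stage sections with sufficiently many free parameters—guaranteed by Serre vanishing for large $d_i'$—to realize the required genericity on any specified finite set of candidate points. Once this is in place, the homotopy construction and the disposal of extraneous components by shrinking $\mathcal V$ reduce to routine additivity of framed correspondences.
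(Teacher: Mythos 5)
Your overall skeleton (descending induction from $i=n$ to $i=0$, with homotopies given by global sections agreeing with $\phi$ modulo $I(Z)^2$) matches the paper, but both your base case and your induction step contain the same fatal gap: you assume the $(i)$-simplicity conditions can be achieved while keeping the original support, taking $c^+=c$, $c^-=0$ in the base case and modifying only $s_i$ through a homotopy in the step. This cannot work, because condition (3) of Definition \ref{def:iSmooth} constrains the sections precisely at the support, where the data is frozen to first order. Already for $n=1$ and $c=(Z(x^2),\A^1,x^2)$, any section congruent to $x^2$ modulo $I(Z)^2=(x^4)$ has a double zero at the origin, so $Z(s_1)$ is never smooth and $c$ cannot be made $(1)$-simple with its own support; in the induction step the same happens, since $s_i$ restricted to the curve $\hat Z_i=Z_{\red}(s_1,\dots,s_{i-1})\cap Z(s_{i+1},\dots,s_n)$ is forced to vanish to order $\geq 2$ at the non-reduced points of $Z$. (You also misread Lemma \ref{prelm:GSO}: it only yields $Z_{\red}(\os_1,\dots,\os_{n-1})\not\subset B_n$, i.e.\ the auxiliary curve is not contained in the accumulated singular loci; it does not give condition (3). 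And your appeal to ``Jacobian non-degeneracy along $Z$'' assumes a regularity of the framing along $Z$ that is exactly the property the lemma is trying to produce.)

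This is why the conclusion of the lemma is a difference $[c]=[c^+]-[c^-]$ with \emph{new}, smooth supports, and why the paper's proof (Lemmas \ref{lm:IndBase} and \ref{lm:IndStep}) is built differently: after the normalization by Lemma \ref{prelm:GSO}, one passes to the projective curve $C\subset\hat Z$, factors the restriction of the relevant section as $\overline s\cdot\overline s^{\prime}$ separating $Z$ from the residual locus $Z^{\prime}$, and uses Lemma \ref{lm:onCurveMove} (resting on Lemma \ref{prelm:SmCseS}) to manufacture two sections $s^{+},s^{-}$ with smooth zero loci and prescribed values on the bad locus $D$; these give two new correspondences $c^{\pm}$ whose difference is $\A^1$-homotopic to $c$. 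Moreover, Lemma \ref{lm:onCurveMove} produces such sections only over field extensions $K/k$ of sufficiently large degree, so the paper must invoke the finite descent of Section \ref{sect:Finite Descent} (Lemma \ref{lm:FinDescent}, Corollary \ref{cor:FinDesN}) to descend to a finite base field; your proposal has no substitute for either the $c^+/c^-$ factorization or the descent step, and the ``stepwise selection as in Lemma \ref{prelm:GSO}'' cannot supply the needed $k$-rational points of a dense open over a finite field.
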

\begin{proof}
The proof follows immediately from Lemma \ref{lm:IndBase} and Lemma \ref{lm:IndStep}. 
\end{proof}

\begin{lemma}\label{lm:IndBase}
Let $Y\subset \A^e_k$ be an open subscheme, and $c= (Z\subset A^n,v\colon \mathcal V\to \A^n,\phi,g\colon \mathcal V\to Y)\in Fr_n(\pt_k,Y)$.
Then  there exist $c^+,c^-\in Fr_n(\pt_k,Y)$, such that $c^+$ and $c^-$ are $(n)$-simple correspondences, and $[c]=[c^+]-[c^-]\in \overline{\mathbb ZFr_n}(\pt_k,Y)$.
\end{lemma}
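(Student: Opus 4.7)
The strategy is to replace the framing functions $\phi_i$ of $c$ by global sections $\os_i\in\Gamma(\P^n,\cO(d'_i))$ chosen in generic position and agreeing with $\phi_i$ to first order at $Z$. This produces a new correspondence $\tilde c$ that is $\A^1$-equivalent to $c$, but whose support $W$ is slightly larger than $Z$; the pieces of $\tilde c$ supported on $Z$ and on the extra points $W\setminus Z$ will play the roles of $c^+$ and $-c^-$ respectively.

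Concretely, I would first extend each $\phi_i$ to a global section $s_i\in\Gamma(\P^n,\cO(d_i))$ for $d_i\gg 0$ using Serre vanishing, arranging both $s_i\big|_{\P^{n-1}}=t_i^{d_i}$ (which forces $Z(s_1,\dots,s_n)$ into the affine chart $\A^n$) and $s_i\big|_{\dub{Z}}=\phi_i\big|_{\dub{Z}}$ (which preserves the first-order data at $Z$). Then applying Lemma \ref{prelm:GSO} to $s$ produces sections $\os=(\os_i)$ still agreeing with $\phi$ on $\dub{Z}$ and satisfying $Z_{red}(\os_1,\dots,\os_{n-1})\not\subset B_n$. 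Let $W=Z(\os_1,\dots,\os_n)\subset\A^n$; then $W\supset Z$ and the residual points $Z^-:=W\setminus Z$ form a finite closed subscheme disjoint from $Z$. Extend $g$ from a neighborhood of $Z$ to a regular map $\tilde g$ on a neighborhood of $W$ into $Y$, which is possible since $Y\subset\A^e$ is affine.

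Set $\tilde c:=(W,\mathcal V',\os,\tilde g)\in Fr_n(\pt_k,Y)$. The linear homotopy with framing $\lambda\phi+(1-\lambda)\os$ (together with an appropriate extension of $g$) defines a framed correspondence over $\A^1$ witnessing $[c]=[\tilde c]$ in $\overline{\mathbb ZFr_n}(\pt_k,Y)$; the key point is that $\os\equiv\phi\pmod{I(Z)^2}$ keeps the support of the homotopy finite over $\A^1$ near $Z\times\A^1$. Using the disjoint decomposition $W=Z\sqcup Z^-$, split $\tilde c=c^++c'$, so that setting $c^-:=-c'$ gives $[c]=[c^+]-[c^-]$.

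Condition (1) of $(n)$-simpleness (Definition \ref{def:iSmooth}) holds by construction, and condition (3) reduces to smoothness of $Z_{red}(\os_1,\dots,\os_{n-1})\cap Z(\os_n)$ near the supports, which follows from Lemma \ref{prelm:GSO} together with perfectness of $k$ (giving that the reduced intersection is smooth on a dense open). The main obstacle is verifying condition (2), that the supports $Z$ and $Z^-$ avoid every singular locus $\Sing Z_{red}(\os_1,\dots,\os_j)$ for $j<n$. For $Z^-$ this should follow from the generic-position conclusion of Lemma \ref{prelm:GSO}; for $Z$, at reduced points the Jacobian of $\phi$ (hence of $\os$) has full rank so the claim is automatic, but any non-reduced component of $Z$ will likely require an additional perturbation argument within the affine subspace of sections fixing the $\dub{Z}$ data.
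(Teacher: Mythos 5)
There is a genuine gap, and it sits exactly at the step you flag as the ``key point''. The linear homotopy $\lambda\phi+(1-\lambda)\os$ only exists on (an open subset of) $\mathcal V\times\A^1$, a neighbourhood of $Z\times\A^1$: the original framing $\phi$ is not defined near the residual points $Z^-=W\setminus Z$, so no homotopy with this framing can ``create'' them. After shrinking the neighbourhood to $\A^n-(W-Z)$, what the homotopy actually proves is $[c]=[c_Z]$, where $c_Z$ has support $Z$ and framing $\os$ (this is precisely the paper's first reduction). By linearity in $\mathbb ZF$ one has $[\tilde c]=[c_Z]+[c']$ with $c'$ the part supported on $Z^-$, so your claimed identity $[c]=[\tilde c]$ is off by $[c']$, which is nonzero in general; with your definitions the true statement is $[c]=[c^+]$, and the introduction of $c^-=-c'$ rests on a false equality. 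Moreover, even $[c]=[c^+]$ does not finish the proof, because your $c^+$ is supported on the original $Z$ and is in general not $(n)$-simple: condition (3) of Definition \ref{def:iSmooth} requires the zero-dimensional scheme $Z_{red}(\os_1,\dots,\os_{n-1})\cap Z(\os_n)$ to be smooth at \emph{every} point, in particular at the points of $Z$; since $\os\equiv\phi$ on $\dub{Z}$, near $Z$ this scheme carries the first-order structure of the original support, so it is non-smooth whenever $Z$ is non-reduced. Lemma \ref{prelm:GSO} gives only the non-containment $Z_{red}(\os_1,\dots,\os_{n-1})\not\subset B_n$ and does not control smoothness of this intersection, nor does it force $Z$ or $Z^-$ to avoid $\Sing Z_{red}(\os_1,\dots,\os_j)$ for condition (2); and the ``additional perturbation within the sections fixing the $\dub{Z}$ data'' you propose cannot work, because that constraint freezes exactly the singularity at $Z$ that has to be removed.

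This is why the paper's proof does not keep the support at $Z$. After the two steps you do share (Serre extension of the framing and Lemma \ref{prelm:GSO}), it passes to the curve $C$ of components of $Z_{red}(s_1,\dots,s_{n-1})$ through $Z$, factors $s_n\big|_C=\overline s\cdot\overline s'$ with $Z(\overline s)=Z$, and then uses Lemma \ref{lm:onCurveMove} (built on Lemma \ref{prelm:SmCseS}) to replace the last framing function by lifts of $\overline s'\cdot s^+$ and $\overline s'\cdot\overline s\cdot s^-$, where $s^+$, $s^-$ are new sections on $C$ with \emph{smooth} zero loci avoiding the bad locus $D$ and agreeing appropriately on $D$; the resulting $c^+$, $c^-$ have supports that are new smooth subschemes of $C$ (not $Z$), and the relation $[c]=[c^+]-[c^-]$ comes from this factorization together with the homotopy between the two modified last framing functions. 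Finally, such sections $s^{\pm}$ are only produced over field extensions $K/k$ of sufficiently large degree, so the argument is completed by the finite descent of Section \ref{sect:Finite Descent} (Lemma \ref{lm:FinDescent}, Corollary \ref{cor:FinDesN}) applied to a pair of coprime extensions. Both the curve-level smoothing of the last framing function and the descent step are missing from your proposal, and they are the heart of the proof.
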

\begin{proof}
By Serre's theorem \cite[Theorem 5.2]{Hartshorne-AlG} we can choose integers $d_i$ and sections $s_i\in \Gamma(\PP^n,\cO(d_i))$,$i =1\dots  n$, $s_i/t^{d_i}_\infty=\phi_i\big|_{Z(I(Z)^2)}$, $s_i\big|_{\PP^{n-1}}=t_\infty^{d_i}$, where $\P^{n-1}\subset \P^n$ is the subspace at infinity and $t_\infty\in \cO(1)$, $Z(t_\infty)=\P^{n-1}$.
Similarly we can choose sections $e_i\in \Gamma(\PP^n,\cO(l_i))$, $1\leq i\leq k$,
$e_i/t^{l_i}_\infty\big|_{Z(I(Z)^2)}=g_i\big|_{Z(I(Z)^2)}$,
where the $g_i$'s are the coordinates of the composition $\mathcal V\xrightarrow{g} Y \hookrightarrow\A^e$.
The functions $\lambda v^*(s_i/t_\infty^{d_i})+(1-\lambda)(\varphi_i)$ and $\lambda v^*(e_i/t_\infty^{l_i})+(1-\lambda)g_i$
gives a homotopy from $c$ to the framed correspondence $$(\A^n-(Z(s_1,\dots s_n)-Z), Z, (s_i/t_\infty^{d_i}), (e_i/t_\infty^{l_i})).$$
Then applying Lemma \ref{prelm:GSO} we can change $c$ in such way that for a new vector of sections $(s_i)$ we have 
\begin{equation}\label{eq:baseSCSing} 
Z_{red}(s_1,\dots s_{n-1})\not\subset \bigcup\limits_{1\leq j < n} \Sing Z_{red}(s_1,\dots s_j).
\end{equation}
Since the original framed correspondence $c$ is equivalent, up to homotopy, to the resulting framed correspondence, we denote the resulting framed correspondence by the same symbol $c\in Fr_n(\pt_k,Y)$. %

Consider the closed subscheme of dimension one
$\hat Z_n=Z_{red}(s_1,\dots s_{n-1}) \subset \mathbb P^n$.
Since $k$ is perfect, the generic points of $\hat Z_n$ are smooth. 
Let $C$ be the union of the irreducible components of $\hat Z_n$ that intersect $Z$, where $Z$ is the support of $c$.
Denote $\overline s_n=s_n\big|_{C}$.
Since $Z_{red}(\overline s_n) = Z_{red}(s_1,\dots s_n)=Z\amalg Z^\prime$ there are line bundles $\cL$ and $\cL^\prime$ on $C$, %
such that $\cL\otimes\cL^\prime=\cO(d_i)$,
with sections $\overline s \in \Gamma(C,\cL)$, $\overline s^\prime\in \Gamma(C,\cL^\prime)$,
such that $Z(\overline s)=Z$, $Z(\overline s^\prime)\cap Z=\emptyset$, and $\overline s_n = \overline s\cdot \overline{s}^\prime$.

Denote 
$$D = C\cap \left( \overline{\hat Z_n - C} \cup \bigcup\limits_{1\leq j<n} \Sing(Z_{red}(s_1,\dots s_j))\cup Z^\prime \cup U^c  \right),$$
where
$U^c$ is the complement in $\PP^n$ of the open subscheme $U = g^{-1}(Y)$,
and $g = (e_j/t_\infty^{l_j})_j\colon \A^n\to \A^e$.
It follows from \eqref{eq:baseSCSing} that $D$ is proper in $C$. Note that by definition $C-D$ is smooth.
Denote $D_1 = D \cap \PP^{n-1}$, $D_2=D-D_1$. %
Now applying Lemma \ref{lm:onCurveMove} to the curve $C$,
the closed subsets $D_1$, $D_2$, and $B=D$, the section $t_\infty$ of the ample sheaf $\mathcal O(1)$, and the invertible sheaf $\mathcal L$,
for all 
$d_n^\prime>N$, for some $N\in\mathbb Z$,
for all field extensions $K/k$, $\deg K>R(d)$, for some $R(d)\in\mathbb Z$,
we find a sections $s_n^-\in \Gamma(C,\mathcal L(d_n^\prime))$, $s_n^-\in \Gamma(C,\mathcal O(d_n^\prime))$
such that 
$Z(s^+)$ and $Z(s^-)$ are smooth, 
$s^+\big|_{D_K}=(s\cdot s^-)\big|_{D_K}$, $s^-\big|_{Z(s_1,\dots s_n)\cup D_2}= t_\infty^{d_n^\prime}$, 
and $s^+$, $s^-$ are invertible on 
$D_K=D\times \Spec K$.

Define the correspondences $c^+_K$ and $c^-_K$ in $Fr_n(\pt_K,Y\times\Spec K)$ as 
$$\begin{aligned}
c^+ &=& ( Z(s_1,\dots s_{n-1},s^+) ,& \A^n- Z^\prime ,& (s_1/t_\infty^{d_1},\dots s_{n-1}/t_\infty^{d_{n-1}} , f^+) ,& g ),\\
c^-& =& (Z(s_1,\dots s_{n-1}, s^-),& \A^n- Z^\prime,& (s_1/t_\infty^{d_1},\dots s_{n-1}/t_\infty^{d_{n-1}} , f^-),& g ).
\end{aligned}$$%
where 
$f^+\in \cO(\A^n)$ is a lift of a regular function $\overline{s}^\prime\cdot s^+/ t_\infty^{d^\prime_n+d_n}\in \cO(C-(C\cap \PP^{n-1}))$,
and $f^-\in \cO(\A^n)$ is a lift of $\overline{s}^\prime \cdot \overline{s}\cdot s^-/ t_\infty^{d^\prime_n+d_n}\in \cO(C-(C\cap \PP^{n-1}))$.

Thus we see that $c^+_K$ and $c^-_K$ are $(n)$-simple framed correspondences and $[c_K] = [c^+_K]-[c^-_K]\in \overline{\mathbb Z Fr_n}(\pt_K,Y\times\Spec K)$, where $c_K$ is the image of $c$ by base change from $k$ to $K/k$.
Now using the finite descent from Section \ref{sect:Finite Descent} we get that $[c]=[c^+]-[c^-]\in \overline{\mathbb ZFr_n}(\pt_k,Y)$ for some $(n)$-simple framed correspondences $c^+$ and $c^-$. 
More precisely, we consider a pair of extensions such that $(\deg K_1/k,\deg K_2/k)=(\deg K_1,\chark k)=(\deg K_2,\chark k)$.
Then in the notation of lemma \ref{cor:FinDesN} we can define
$c^+ = pr\circ (c^+_{K_1} \oplus c^+_{K_2})\circ  L$,
and similarly for $c^-$.
\end{proof}

\begin{lemma}\label{lm:IndStep}
Let $i=1\dots n-1$.
Then for any $(i+1)$-simple framed correspondence 
$c= (Z\subset A^n,v\colon \mathcal V\to \A^n,\phi,g\colon \mathcal V\to Y)\in Fr_n(\pt_k,Y)$ for some open $Y\subset \A^e$, 
there exist $c^+,c^-\in Fr_n(\pt_k,Y)$, such that $c^+$ and $c^-$ are $(i)$-simple correspondences, and $[c]=[c^+]-[c^-]\in \overline{\mathbb ZFr_n}(\pt_k,Y)$.
\end{lemma}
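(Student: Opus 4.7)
The plan is to follow the strategy of Lemma \ref{lm:IndBase}, but modify only the $i$-th section $s_i$ while keeping $s_1, \dots, s_{i-1}, s_{i+1}, \dots, s_n$ fixed. Since $B_i = \bigcup_{1 \leq j < i} \Sing Z_{red}(s_1, \dots, s_j)$ depends only on the unchanged sections $s_1, \dots, s_{i-1}$, the generic position condition $Z \subset \P^n - B_i$ holds automatically from $(i+1)$-simplicity (as $B_i \subset B_{i+1}$), and conditions (1) and (2) of Definition \ref{def:iSmooth} remain valid after any modification of $s_i$. Thus the only new property to establish is condition (3), the smoothness of $Z_{red}(s_1, \dots, s_{i-1}) \cap Z(s_i^\pm, s_{i+1}, \dots, s_n)$ for the new correspondences $c^\pm$.

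The geometric setup is as follows. Let $C$ denote the union of those irreducible components of the reduced scheme underlying $Z_{red}(s_1, \dots, s_{i-1}) \cap Z(s_{i+1}, \dots, s_n)$ that meet the support $Z$. The boundary condition $s_j|_{\P^{n-1}} = t_j^{d_j}$ implies that $Z_{red}(s_1, \dots, s_{i-1})$ has pure dimension $n-i+1$, and a dimension count shows that $C$ is one-dimensional. The $(i+1)$-simplicity hypothesis implies that $C$ is smooth at each point of $Z$, so $\overline s_i := s_i|_C$ factors as $\overline s \cdot \overline s'$ on $C$ with $Z(\overline s) = Z$ (as a Cartier divisor) and $Z(\overline s') \cap Z = \emptyset$.

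Next I would apply Lemma \ref{lm:onCurveMove} to the curve $C$, the section $\overline s_i$, and an appropriate twist of $\cO(d_i)|_C$, taking $D \subset C$ to be the union of $C \cap \overline{(Z_{red}(s_1, \dots, s_{i-1}) \cap Z(s_{i+1}, \dots, s_n)) - C}$, $Z(\overline s')$, $C \cap B_{i+1}$ (which is a proper closed subset of $C$ by hypothesis), and $C \cap g^{-1}(Y^c)$. This produces sections $s_i^+, s_i^-$ on $C$ with smooth vanishing loci, agreeing on $D$ in the prescribed way, over a sufficiently large finite field extension $K/k$. Lifting these to global sections of $\cO(d_i')$ on $\P^n$ with $s_i^\pm|_{\P^{n-1}} = t_i^{d_i'}$ and with the correct behavior on the first-order thickening of $Z$ yields $(i)$-simple correspondences $c^\pm_K \in Fr_n(\pt_K, Y_K)$ such that $[c_K] = [c^+_K] - [c^-_K]$. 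Finite descent via Corollary \ref{cor:FinDesN}, applied to a pair of coprime-degree extensions $K_1/k$, $K_2/k$ as in the proof of Lemma \ref{lm:IndBase}, then yields the desired $c^\pm \in Fr_n(\pt_k, Y)$.

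The main obstacle will be verifying the hypotheses of Lemma \ref{lm:onCurveMove} for the curve $C$: pure one-dimensionality, smoothness of $C$ along $Z$, and properness of $D$ in $C$. These should follow from the $(i+1)$-simplicity assumption combined with the boundary behavior of the sections, but their verification is more involved than in the base case since one is now working inside the $(n-i+1)$-dimensional intermediate variety $Z_{red}(s_1, \dots, s_{i-1})$ rather than directly in $\P^n$, so the transversality of the subsequent cuts by $s_{i+1}, \dots, s_n$ is not automatic and must be extracted from the given simplicity hypothesis.
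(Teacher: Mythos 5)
Your proposal follows the paper's proof of this lemma essentially step for step: keep $s_1,\dots,s_{i-1},s_{i+1},\dots,s_n$ fixed and modify only the $i$-th framing, work on the one-dimensional scheme $\hat Z_i = Z_{red}(s_1,\dots,s_{i-1})\cap Z(s_{i+1},\dots,s_n)$, let $C$ be the union of its components meeting $Z$, split $s_i|_C=\overline s\cdot\overline s'$ with $Z(\overline s)=Z$, apply Lemma \ref{lm:onCurveMove} with $D$ the union of the boundary, singular and excess loci, and conclude over suitable coprime finite extensions by the finite descent of Section \ref{sect:Finite Descent}. So the route is the same as the paper's.

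However, the step you label ``the main obstacle'' --- smoothness of $C$ (equivalently of $\hat Z_i$) at the points of $Z$ --- is precisely the only content of this lemma that is not a repetition of Lemma \ref{lm:IndBase}, and your text asserts it at one point and defers it at another without proof; as written this is a genuine gap. It does hold, by a short tangent-space argument which is what the paper supplies: by condition (2) of $(i+1)$-simplicity every closed point $z\in Z$ is a smooth point of $Z_{red}(s_1,\dots,s_i)$ and of $Z_{red}(s_1,\dots,s_{i-1})$, so $T=T_z Z_{red}(s_1,\dots,s_i)$ has dimension $n-i$; by condition (3) the scheme $Z_{red}(s_1,\dots,s_i)\cap Z(s_{i+1},\dots,s_n)$ is smooth of dimension $0$, so the differentials $d\phi_{i+1},\dots,d\phi_n$ are linearly independent on $T$, hence also on the larger space $T_z Z_{red}(s_1,\dots,s_{i-1})$; therefore the cut by $s_{i+1},\dots,s_n$ is transversal at $z$ and $\hat Z_i$ is smooth of dimension one there. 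Note also that you do not need smoothness of all of $C$ away from the divisorial data: as in the paper one takes $B=D\cup\Sing C$ in Lemma \ref{lm:onCurveMove} (which is still proper in $C$, since $Z$ meets every component of $C$ while $Z\cap D=\emptyset$ and $Z\cap\Sing C=\emptyset$ by the above), so only smoothness of $C$ along $Z$ is required. With this verification inserted, your argument coincides with the paper's proof.
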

\begin{proof}
Consider the closed subscheme of dimension one
$\hat Z_i=Z_{red}(s_1,\dots s_{i-1})\cap Z(s_{i+1},\dots s_n)$ in $\mathbb P^n$.
We will prove that 
$\hat Z_i\neq \Sing \hat Z_i$.

Since $c$ is $(i)$-simple $Z\subset Z_{red}(s_1,\dots ,s_i) - Sing(Z_{red}(s_1,\dots ,s_i))$,
and so any closed point $z\in Z$ is a smooth point of $Z_{red}(s_1,\dots ,s_i)$. 
Hence $\dim T=n-i$, where $T=T_z(Z_{red}(s_1,\dots s_i))$ is the tangent vector space. 
On the other hand, by assumption $Z^\prime$ is smooth, where $Z^\prime=Z_{red}(s_1,\dots s_i)\cap Z(s_{i+1},\dots s_n)$. 
Hence the gradients $d \phi_j$ of the functions
$\phi_j=s_j/t_j^{d_j}$, $j> i$,
are linearly independent on $T$;
and consequently the gradients $d\phi_j$, $j>i$, are linearly independent on the tangent space $T_z(Z_{red}(s_1,\dots s_{i-1}))\supset T$.
Thus $z$ is a smooth point on $\hat Z_i$, and hence there is a smooth Zariski neighbourhood of $z$ in $\hat Z_i$. %

The rest of the proof is similar to the proof of Lemma \ref{lm:IndBase}. 
Namely, let $C$ be the union of the irreducible components of $\hat Z_i$ that intersect $Z_i$. 
Then there are line bundles $\cL$ and $\cL^\prime$ on $C$,
such that $\cL\otimes\cL^\prime=\cO(d_i)$,
and sections $\overline s \in \Gamma(C,\cL)$, $\overline s^\prime\in \Gamma(C,\cL^\prime)$,
such that $Z(\overline s)=Z_i$, $Z(\overline s^\prime)\subset Z^\prime$, where $Z\amalg Z^\prime = Z(s_1\dots s_n)$.
Denote 
$$D = C\cap ( \overline{\hat Z_i - C} \cup \bigcup\limits_{j<i} Sing(Z_{red}(s_1,\dots s_j))\cup Z^\prime \cup U^c  ),
D_1 = D \cap \PP^{n-1}, D_2=D-D_1.
$$
Since $c$ is $(i)$-simple, $Z\subset \PP^n-\bigcup_{j<i} Sing(Z_{red}(s_1,\dots s_j))$. Hence $D\subset \hat Z_i$ is proper. By the above $B=D\cup \Sing C$ is proper too.
Applying Lemma \ref{lm:onCurveMove} to the curve $C$, 
and the proper closed subsets $D_1$, $D_2$, and $B=D\cup \Sing C$ %
we obtain the claim
over all field extensions $K/k$, $\deg K>R$ for some integer $R$.
The finite descent of Section \ref{sect:Finite Descent} finishes the proof as in the previous lemma. 
\end{proof}

\begin{lemma}\label{prelm:SmCseS}
Let $C$ be a reduced projective curve over a 
perfect field $k$, and $D\subset C$ a proper closed subscheme such that $C-D$ is smooth.
Let $\mathcal L$ be an invertible sheaf on $C$,
$\cO(1)$ an ample invertible sheaf on $C$,
and $r$ and $r^\prime$ invertible sections of $\mathcal L$ and $\cO$ on $D$.
Then there exists an integer $N$ such that for all $d > N$ there exists an integer $R(d)$ such that for all field extensions $K/k$ of degree $\deg_k K>R(d)$, there exists a section $s\in \Gamma(C,\mathcal L(d))$ such that $Z(s)$ is smooth and $s\big|_D=r\cdot {r^\prime}^d$.
\end{lemma}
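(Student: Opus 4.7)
The plan is to combine Serre vanishing with a Bertini-type argument and then apply Lemma \ref{lm:bigK-Aff} to promote geometric genericity into existence over a sufficiently large finite extension of $k$.

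First, I would establish the existence of some lift. Writing $I_D\subset \cO_C$ for the ideal sheaf of $D$, the short exact sequence
$$0 \to \mathcal L(d)\otimes I_D \to \mathcal L(d) \to \mathcal L(d)|_D \to 0$$
together with Serre vanishing (using ampleness of $\cO(1)$) produces an integer $N_1$ such that $H^1(C,\mathcal L(d)\otimes I_D)=0$ for all $d>N_1$; the restriction map $\Gamma(C,\mathcal L(d))\twoheadrightarrow \Gamma(D,\mathcal L(d)|_D)$ is then surjective. Consequently there exists $s_0\in \Gamma(C,\mathcal L(d))$ with $s_0|_D = r\cdot (r')^d$, and the fiber over $r\cdot (r')^d$ is a torsor for the $k$-vector space $V := \Gamma(C,\mathcal L(d)\otimes I_D)$, which I identify with $\A^n_k$, $n = \dim_k V$. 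Since $s|_D = r\cdot (r')^d$ is invertible for every lift $s$, one automatically has $Z(s)\subset C-D$; as $C-D$ is smooth by hypothesis, $Z(s)$ is smooth if and only if $s$ has only simple zeros on $C-D$.

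Next, I would run a Bertini argument to cut out the bad locus. Let $B\subset \A^n_k$ be the closed subscheme of lifts with a non-simple zero on $C-D$. Over $\overline k$, a standard incidence-variety dimension count shows that $B$ is a proper closed subscheme once the linear system $\mathcal L(d)\otimes I_D$ separates tangent vectors at every closed point of $C-D$; the latter follows from Serre vanishing applied to the sequence
$$0 \to \mathcal L(d)\otimes I_D \otimes \mathfrak m_x^2 \to \mathcal L(d)\otimes I_D \to \mathcal L(d)|_{\Spec(\cO_{C,x}/\mathfrak m_x^2)} \to 0$$
uniformly in $x\in C-D$, which holds for $d$ larger than some $N_2$. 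Setting $N := \max(N_1, N_2)$ takes care of both steps.

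Finally, I would apply Lemma \ref{lm:bigK-Aff} to $B\subset \A^n_k$: there is an integer $R(d)$ such that for every finite extension $K/k$ with $\deg_k K > R(d)$, the complement $(\A^n_k - B)(K)$ is nonempty. Any $K$-rational point outside $B$ yields a section $s\in\Gamma(C\times_k\Spec K,\mathcal L(d))$ in the prescribed coset with $Z(s)$ smooth. The main obstacle I anticipate is the Bertini step in positive characteristic; however, because one only needs separation of tangent vectors at closed points of the smooth curve $C-D$ rather than generic smoothness of a morphism, the classical incidence-variety dimension count applies directly and no Frobenius subtlety intervenes.
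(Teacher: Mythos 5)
Your proposal is correct and follows essentially the same route as the paper: the lifts of $r\cdot (r')^d$ form an affine space over $k$, the locus of sections with a non-reduced zero is shown (over $\overline{k}$) to be a proper closed subset by an incidence-variety dimension count, reduced to surjectivity of restriction onto second-order jets at points of $C-D$ via ampleness, and Lemma \ref{lm:bigK-Aff} then produces rational points in the complement over all sufficiently large extensions $K/k$. The only step where the paper is more explicit is the uniformity in $x$ of the jet-surjectivity bound $N_2$, which it secures by working with the doubled diagonal in the relative curve $(C-D)\times C$ and a noetherianity/support argument rather than pointwise Serre vanishing.
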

\begin{proof}

Consider the affine space $\Gamma_d \subset \Gamma(C,\cL(d))$, $s\in \Gamma_d$ iff $s\big|_{D}=r\cdot {r^\prime}^d$.
Consider the universal section $\tilde s\in \Gamma(C\times\Gamma_d,\cL(d))$, and
its vanishing locus $Z(\tilde s)$, %
which is a closed subscheme $Z(\tilde s)\subset C\times\Gamma_d$.
The image of $\Supp\Omega_{Z(\tilde s)/\Gamma_d}\subset Z(\tilde s)$ under the projection to $\Gamma_d$ is the closed subscheme which parametrizes the set of sections $s$ such that $Z(s)\subset C$ is non-reduced. 
Denote this image by $B_d$ and let $U_d\subset \Gamma_d$ be the complement of $B_d$.

To find a section $s$ satisfying the requirements of the lemma is equivalent to finding a rational point in $U_d$ for all $d$ greater than some integer $N$.
We want to prove that there exists $R$ such that for all $K$ of degree $\deg K/k>R$ there exists $s\in \Gamma_d(K)$. %
Since $U_d$ is an open subscheme in an affine space over $k$, it suffices to show that there exists some integer $N$ such that for all $d>N$ we have $U_d\neq\emptyset$.
At the same time, to prove that $U_d\neq\emptyset$ it suffices to prove this over an algebraic closure $\overline k/k$,
that is,
$(U_d)_{\overline k}\neq\emptyset$. %

Thus we need to prove that $U_d\neq \emptyset$ for all $ d$ greater than some $N$
under the assumption that $k$ is algebraically closed.
For an algebraically closed field $k$ the property that $Z(s)$ is non-reduced for some $s\in \Gamma_d$ means that there is some point $p\in C$, such that $s\big|_{Z(I(p)^2)}=0$.
Since the sections $r$ and $r^\prime$ are invertible, we can assume in addition that
$p\in \osuC$, where $\osuC=C-D$.
Consider the closed subscheme $E_d\subset \Gamma_d\times \osuC$, $E=\{(s,p)| s\big|_{Z(I(p)^2)}=0 \}$.
Then $B_d$ is the image of $E_d$ under the projection to $\Gamma_d$.

We claim that there exists some $d\in \mathbb Z$ such that for each point $p\in \osuC$
we have
\begin{equation}
\label{eq:codim-claim}
\codim_{\Gamma_d}(\{s\in \Gamma_d | s\big|_{Z(I(p)^2)}=0\}) = 2.
\end{equation}
Then \eqref{eq:codim-claim} implies the lemma. 
Indeed,
if for all $p\in \osuC$, $\codim_{\Gamma_d}(s\in \Gamma_d | s\big|_{Z(I(p)^2)}=0) = 2$, then
$\dim(E_d)\leq \dim (\osuC) + \dim(\Gamma_d)-2$, and hence
$\codim_{\Gamma_d}(B_d) = \dim({\Gamma_d}) - \dim(B_d)\geq \dim_{\Gamma_d} - \dim(E_d) \geq 1$.

To prove \eqref{eq:codim-claim} it suffices to prove that for some $d\in \mathbb Z$ and for all $p\in \osuC$,
the restriction homomorphism $r_{p}^{d}\colon \Gamma(C,\cL(d)) \to \Gamma( Z(I(p)^2)\amalg D,\cL(d) )$ is surjective.
Consider the scheme $\osuC\times C$ as a relative curve over $\osuC$, 
and let $\Delta\subset \osuC\times C$ be the graph of the embedding $\osuC\hookrightarrow C$.
Then the set of points $p\in \osuC$ such that $r_p^d$ is not surjective is equal to 
$$W_d = \Supp  pr_*(\Coker(\cL(d)\to j^*j_*\cL(d)) )  ,$$ %
where $pr\colon \osuC\times C\to \osuC$, $i\colon \osuC\times D\to \osuC\times C$, $j\colon Z\to \osuC\times C$, $Z=Z(I(\Delta)^2)\cup \osuC\times D$, and $j_*$ and $j^*$ are the direct and inverse images of coherent sheaves.
Since $\cO(1)$ is ample, it follows that for each $p\in \osuC$ there is $N$ such that for all $d>N$ the restriction homomorphism $r^d_p$ 
is surjective. %
So \eqref{eq:codim-claim} follows since $\osuC$ is a noetherian scheme of finite Krull dimension. %
\end{proof}

\begin{lemma}\label{lm:onCurveMove}
Let $C$ be a reduced projective curve over a perfect field $k$,
$D_1\amalg D_2=D\subset B\subset C$ be closed subsets such that $C-B$ is smooth and non-empty;
let $s\in \Gamma(C,\cL)$ be a section in some invertible sheaf $\cL$ on $C$ such that $s\big|_{D}$ is invertible; let $\cO(1)$ be any ample bundle on $C$ with a section $t\in \Gamma(C,\cO(1))$ such that $Z_{red}(t)\subset D_1$.

Then for all $d>N$, for some $N\in \mathbb Z$, for all field extensions $K/k, \deg_k K>R(d)$, for some $R(d)$, there exist $s^+\in \Gamma(C_K,\cL(d))$, $s^-\in \Gamma(C_K,\cO(d))$ such that  $Z(s^+)$ and $Z(s^-)$ are smooth, $s^+\big|_{D_K}=(s\cdot s^-)\big|_{D_K}$, $s^-\big|_{Z(s)\cup D_2}= t$, and $s^+$, $s^-$ are invertible on $B_K=B\times \Spec K$ (and consequently on $D_K=D\times \Spec K$).

\end{lemma}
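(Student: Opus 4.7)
The plan is to construct $s^-$ and $s^+$ by two essentially parallel applications of Lemma \ref{prelm:SmCseS}, with the boundary data for $s^+$ pinned down in advance by the boundary data chosen for $s^-$ rather than by the output of the first invocation.

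For $s^-$ I apply Lemma \ref{prelm:SmCseS} with proper closed subscheme $D^- := B\cup Z(s)\subset C$, line bundle $\cO_C$, and ample bundle $\cO(1)$. The complement $C-D^-$ is smooth and nonempty because $C-B$ is smooth and nonempty of positive dimension while $Z(s)$ is finite. For the prescribed data, I choose $r^\prime \in \Gamma(D^-, \cO(1))$ with $r^\prime\big|_{Z(s)\cup D_2} = t\big|_{Z(s)\cup D_2}$ (this restriction is invertible: $Z_{red}(t)\subset D_1$, and $s\big|_D$ invertible forces $Z(s)\cap D_1=\emptyset$, while $D_1\cap D_2=\emptyset$ by hypothesis) and extended to any invertible section on the rest of the finite scheme $D^-$ (possible since $\cO(1)|_{D^-}$ is trivial on a $0$-dimensional scheme); and $r\in \Gamma(D^-, \cO_C)$ equal to $1$ on $Z(s)\cup D_2$ and to any invertible value elsewhere. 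Then $r\cdot (r^\prime)^d\big|_{Z(s)\cup D_2} = t^d$, matching the condition $s^-\big|_{Z(s)\cup D_2}=t$ of the statement read as an equality of sections of $\cO(d)$ (compare the application $s^-\big|_{\ldots}=t_\infty^{d_n^\prime}$ inside Lemma \ref{lm:IndBase}), and the product is invertible throughout $D^-\supset B$.

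For $s^+$ I apply Lemma \ref{prelm:SmCseS} again, now with $D^+:=B$, line bundle $\cL$, and the same ample bundle. The key point is that $s^-\big|_D$ is determined not by the outcome of the first invocation but by the advance choice $r\cdot (r^\prime)^d\big|_D$, so I prescribe $s^+$ on $B$ to equal $s\cdot (r\cdot (r^\prime)^d)\big|_D$ on $D$ and to equal any chosen invertible section on $B-D$. These values are all invertible ($s\big|_D$ invertible and $r\cdot (r^\prime)^d\big|_D$ invertible), and expressing them in the required form $\tilde r\cdot (\tilde r^\prime)^d$ is routine: pick any invertible section of $\cO(1)|_B$ and solve pointwise for $\tilde r$.

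Taking $d$ larger than both $N$-thresholds and $\deg_k K$ larger than both $R(d)$-thresholds from the two invocations of Lemma \ref{prelm:SmCseS} yields $s^\pm$ over $C_K$ with smooth zero loci, with the prescribed restriction $s^+\big|_D=(s\cdot s^-)\big|_D$, with $s^-\big|_{Z(s)\cup D_2}=t^d$, and both sections invertible on $B_K$. The main subtlety is the apparent circular dependency between the two constructions, which is resolved by the observation that $s^-\big|_D$ is already pinned down by our boundary data in the first application, so the prescription for $s^+$ can be fixed before either invocation is carried out.
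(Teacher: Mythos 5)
Your overall route is the paper's: two successive applications of Lemma \ref{prelm:SmCseS}, first to get $s^-$ with prescribed invertible boundary data, then to get $s^+$ whose values on $D$ are dictated by the \emph{a priori known} boundary data $r\cdot(r')^d$ rather than by the output section $s^-$; the paper does exactly this, only phrased through the decomposition $B=B_1\amalg B_2\amalg B_3\amalg B_4$ and the explicit choices $r_1$, $t$, $w$. Your choice of $D^-=B\cup Z(s)$ for the first application is, if anything, a more careful implementation of the first step (the paper prescribes data only on $B$ yet asserts control of $s^-$ along $Z(s)$).

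There is, however, one step that does not go through as literally written: ``pick any invertible section of $\cO(1)|_B$ and solve pointwise for $\tilde r$''. If the section $u$ you pick does not agree with $r'$ on $D$, then solving gives $\tilde r|_D=s\cdot r\cdot (r'/u)^d$, which depends on $d$; but in Lemma \ref{prelm:SmCseS} the threshold $N$ (and hence $R(d)$) is allowed to depend on the prescribed pair $(r,r')$, so the data fed into the second invocation must be fixed once and for all, independently of $d$ --- otherwise the quantifier order ``$\exists N\ \forall d>N$'' collapses, which is precisely the delicacy this lemma is about. The repair is immediate from your own setup: since $B\subset D^-$, the section $r'$ is already defined and invertible on $B$, so take $\tilde r'=r'|_B$, take $\tilde r|_D=(s\cdot r)|_D$, and on $B-D$ simply \emph{choose} the prescription to be $w\cdot(r')^d$ for a fixed invertible section $w$ of $\cL$ on $B-D$ (i.e.\ $\tilde r|_{B-D}=w$); this is legitimate because on $B-D$ only invertibility is required. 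With $(\tilde r,\tilde r')$ thus independent of $d$, both invocations can be run with $d>\max(N_1,N_2)$ and $\deg_k K>\max(R_1(d),R_2(d))$, and your argument is complete. One further small point: you assert without proof that $Z(s)$ is finite. This does follow from the hypotheses --- if $Z(s)$ contained an irreducible component $C_0$ of $C$, then $Z_{red}(t)\subset D_1$ together with $Z(s)\cap D_1=\emptyset$ would make $t$ nowhere vanishing on $C_0$, contradicting ampleness of $\cO(1)$ on the projective curve $C_0$ --- but since your first application of Lemma \ref{prelm:SmCseS} needs $D^-$ zero-dimensional and $C-D^-$ non-empty, this deserves a line.
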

\begin{proof}
Since $D_1\cap D_2=D_1\cap Z(s) = D_2\cap Z(s)=\emptyset$, and since $B$ is a zero-dimensional scheme, it follows that $B$ splits into a disjoint union of
\begin{align*}
B_4 = B- (D\cup Z(s) ), B_1 = B - (D_2\cup Z(s)\cup B_4), \\
B_2 = B- (D_1\cup Z(s)\cup B_4), B_3 = B-(D\cup B_4).
\end{align*}
Let $r_1$ %
be any invertible sections of $\cO(1)$ on $B_1$, and let $w$ denote any invertible section on $\cL$ on $B_3\cup B_4$. %

Applying Lemma \ref{prelm:SmCseS} to the closed subset $B$, the line bundle $\cO(1)$, and the invertible section $r_1\oplus t^d\big|_{B_2\cup B_3\cup B_4}$, we see that
there exists $N_1$, such that for all $d>N_1$ there exists $R_1(d)$ such that for all $K/k, \deg K/k>R_1(d)$,
there exists a section $s^-\in \Gamma(C_K,\cO(d))$ such that %
$s^-\big|_{Z(s)\cup D_2}=t^d$, $s^-\big|_{D_1}=r_1^d$, and such that $Z(s^-)$ is smooth.

Applying Lemma \ref{prelm:SmCseS} to the closed subset $B$, the line bundles $\cL(N_1)$, $\cO(1)$ and the invertible section
\begin{align*}
  & (s \cdot s^-\big|_{D}) \amalg (w \cdot t^{N_1}) \in \Gamma((B_1\cup B_2)\amalg (B_3\cup B_4),\cL(N_1)), \\
  & r_1\amalg t\in \Gamma(B_1\amalg (B_2\cup B_3\cup B_4),\cO(d)),
\end{align*}
we see that 
there exists $N$ such that for all $d>N$ %
there exists $R(d)$ such that for all $K, \deg K/k> R(d)$, 
there exists a section $s^+\in \Gamma(C_K,\cL(d))$ such that 
$s^+\big|_{D_2} = s t^{d}\big|_{D_2}$, $s^+\big|_{D_1}=s r_1^d\big|_{D_1}$, $s^+\big|_{B_3\amalg B_4}=w t^d$,
and $Z(s^+)$ is smooth. %

So we get the sections $s^+$, $s^-$ with the required properties $s^+ = s s^-\big|_{D}$ and $Z(s)\subset C-B$, $Z(s)$ is smooth. 
\end{proof}

\begin{lemma}\label{lm:standFrptpt}
For any framed correspondence $c\in Fr_n(\pt_k,\pt_k)$ there is a pair of standard framed correspondences $c^+,c^-\in Fr_1(\pt_k,\pt_k)$ such that $[c]=[c^+]-[c^-]\in \overline{\mathbb ZF}(\pt_k,\pt_k)=H_0(\mathbb ZF(\Delta^\bullet,\pt_k))$.
\end{lemma}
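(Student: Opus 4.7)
The approach I would take has two natural versions, reflecting the two strategies mentioned for this lemma in the introduction.

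\emph{Direct version.} Given $c\in Fr_n(\pt_k,\pt_k)$, I would first apply Lemma \ref{lm:iOScormove} with $i=1$ to obtain $(1)$-simple framed correspondences $c^+_0,c^-_0\in Fr_n(\pt_k,\pt_k)$ satisfying $[c]=[c^+_0]-[c^-_0]$ in $\overline{\mathbb ZF}_n(\pt_k,\pt_k)$. By Remark \ref{rm:siml0} and perfectness of $k$, the supports of $c^\pm_0$ are finite disjoint unions of closed points $\Spec K$ with $K/k$ separable. Next, I would reduce the level from $n$ to $1$: since the framing functions have linearly independent differentials on the smooth support, one can successively $\A^1$-homotope $\phi_2,\dots,\phi_n$ to the coordinate functions $x_2,\dots,x_n$ inside a shrinking \'etale neighbourhood, rewriting $c^\pm_0$ modulo $\A^1$-homotopy as $\sigma^{n-1}(\tilde c^\pm)$ for some $\tilde c^\pm\in Fr_1(\pt_k,\pt_k)$ with the same support. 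Finally, for each component $\Spec K$ of the support I would apply Lemma \ref{sblm:prT} to obtain $\A^1$-equivalence of the corresponding summand with $\Lambda_{[K:k]}$, and then Lemma \ref{sbl:Lambda} to rewrite it as $\Lambda'_{[K:k]}$, a sum of $h$'s and $\langle\pm 1\rangle$'s, all of which have rational support and hence are standard level-$1$ correspondences.

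\emph{Finite-descent version.} Alternatively, the conclusion can be phrased as a weak lifting property (Definition \ref{def:WeakLP}) for an embedding $\Gamma'\to\Gamma$ of finite precategories good with respect to descent (Definition \ref{def:GammaGammaprimeGoodwrtDesctnt}), in which $\gamma'$ records the datum of $c$ and $\gamma$ records abstract standard correspondences $c^\pm\in Fr_1(\pt_k,\pt_k)$ together with the homotopy realising $[c]=[c^+]-[c^-]$. Over any infinite extension $K/k$, the original proof of \cite[Lemma 5.4]{Nesh-FrKMW} supplies such a lift in $\overline{\mathbb ZF}_*(K)$, since over an infinite field one has separable monic polynomials of arbitrary degree with rational roots. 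Corollary \ref{cor:FinDesInf} then descends the lift to $k$.

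The main obstacle in the direct approach is the level-reduction step, where the framing functions must be homotoped near a non-rational closed point while preserving both the support and the \'etale neighbourhood. This is essentially the content of \cite[Lemma 4.10]{Nesh-FrKMW}, and is the place where careful bookkeeping of supports across the homotopy is needed. The finite-descent version sidesteps this issue entirely and is the more efficient route, so I would present it as the primary proof.
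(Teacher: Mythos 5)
Your opening reduction (Lemma \ref{lm:iOScormove} together with \cite[Lemma 4.10]{Nesh-FrKMW} to reach a simple level-one correspondence) is exactly how the paper starts, but the final step of your direct version contains a genuine error. A simple correspondence in $Fr_1(\pt_k,\pt_k)$ whose support is a closed point $\Spec K$ is \emph{not} determined up to $\A^1$-homotopy by the extension $K/k$ alone, so it need not be equivalent to $\Lambda_{[K:k]}$. Lemma \ref{sblm:prT} applies only to the specific correspondence $pr_{K/k}\circ T_{K/k}$, whose framing near the support is the minimal monic polynomial $f$ itself; a general summand has framing $u\cdot f$ modulo $I(Z)^2$ for some unit $u$ on $\Spec K$, and this unit is an invariant of the class. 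Already for $K=k$ your recipe would give $[\langle a\rangle]=[\Lambda_1]=[\langle 1\rangle]$ for every $a\in k^\times$, hence that $\overline{\mathbb ZF}(\pt_k,\pt_k)$ is generated by $\langle 1\rangle$ and $h$; this contradicts the injectivity of $GW(k)\to \overline{\mathbb ZF}(\pt_k,\pt_k)$ coming from Lemmas \ref{lm:Phi} and \ref{lm:Psi} whenever $k$ has a non-square unit, e.g.\ for finite fields of odd characteristic. The paper deals with the unit by an induction on the degree of the support: a simple correspondence is written as $(Z(f),\A^1_k,fg,pr)$ with $g$ homotoped to have degree $\deg f-1$, one uses the linearity relation $[(Z(fg),\A^1_k,fg,pr)]=[(Z(f),\A^1_k,fg,pr)]+[(Z(g),\A^1_k,fg,pr)]$, the full-support term is of the form \eqref{eq:hypcor} and only for such terms (framing equal to the defining polynomial on its entire zero locus) is the homotopy $\lambda x^n+(1-\lambda)f$ to $\Lambda_n$ and the computation of Lemma \ref{sbl:Lambda} legitimate, while the remaining term has support of smaller degree and is handled by induction.

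Your finite-descent alternative is indeed the route the paper mentions in the remark following the lemma, but it does not ``sidestep the issue entirely'' as you claim. Being a \emph{standard} correspondence is a property of an explicit correspondence, not a diagram condition in $\overline{\mathbb ZF}_*$, so it is not recorded by the weak lifting property of Definition \ref{def:WeakLP}; and the descent construction of Lemma \ref{lm:FinDescent} / Corollary \ref{cor:FinDesInf} returns over $k$ correspondences of the shape $pr\circ(c^{\pm}_{K_1}\amalg c^{\pm}_{K_2})\circ L$, which are not standard. One must still identify the classes of these composites with integer combinations of standard correspondences over $k$ (a transfer-type computation in the spirit of Lemma \ref{lm:TrDiagGW}), so some version of the direct argument cannot be avoided. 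This is presumably why the paper takes as its actual proof the direct modification of Neshitov's argument, where the only use of infiniteness --- the existence of separable monic polynomials with rational roots of arbitrary degree --- is replaced by the homotopy to $x^n$ together with Lemma \ref{sbl:Lambda}.
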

\begin{proof}
Following the original strategy of the proof of \cite[Lemma 5.4]{Nesh-FrKMW}
we see that it suffices to consider the case of \begin{equation}\label{eq:hypcor}c = (Z(f),\A^1_k, f,pr)\in Fr_1(\pt_k,\pt_k),\end{equation} where $f\in k[\A^1_k]$, $pr\colon Z(f)\to \pt_k$ is the projection. %
For completeness we recall the arguments from \cite[Lemma 5.4]{Nesh-FrKMW}.

Firstly, by Lemma \ref{lm:iOScormove} and \cite[Lemma 4.10]{Nesh-FrKMW} we can reduce to consider a simple correspondence $c\in Fr_1(\pt_k,\pt_k)$.
Now, let $c = (Z(f),\A^1_k, fg,pr)\in Fr_1(\pt_k,\pt_k)$, where $f\in k[\A^1_k]=k[t]$ $\deg f=n$, $g\in k[\A^1_k]=k[t]$, $Z(g)\cap Z(f)=\emptyset$, $pr\colon Z(f)\to \pt_k$ is the projection.
We prove that the class of $c$ is a sum of classes of correspondences of the form \eqref{eq:hypcor}.
Actually, let $g^\prime\in k[t]$ be a polynomial of degree $n-1$ such that $g\big|_{Z(f)}=g^\prime \big|_{Z(f)}$.
Then because of the homotopy given by $\lambda g^\prime +(1-\lambda)g$ we see that we can assume that $g^\prime=g$.
Then the class of the correspondence $c^\prime = (Z(g),\A^1_k, fg,pr)\in Fr_1(\pt_k,\pt_k)$ satisfies the induction assumption. 
Hence the claim follows, since $[c]=[(Z(fg),\A^1_k, fg,pr)]-[(Z(g),\A^1_k, fg,pr)]\in \mathbb ZFr_1(\pt_k,\pt_k)$. 

Now all what is needed is to show that the class of \eqref{eq:hypcor} is standard.
But this is true,
since the homotopy $\lambda x^n +(1-\lambda) f$ and Lemma \ref{sbl:Lambda} implies that $[c]=mh$, for $n=2m$, or $[c]=mh+1$, for $n=2m+1$.
\end{proof}
\begin{remark}
Alternatively we could say that the proof of the above lemma presented in \cite[Lemma 5.4]{Nesh-FrKMW} holds for an arbitrary infinite field and the deduce the finite field case using the finite descent form the section \ref{sect:Finite Descent}.
\end{remark}

\bibliographystyle{amsplain}

\end{document}